\newcommand{\Z}{{\mathbf Z}}
\newcommand{\R}{{\mathbf R}}
\newcommand{\coker}{{\mbox{\rm coker}}}
\newcommand{\Hom}{{\rm Hom}}
\newcommand{\supp}{\rm {supp}}
\newcommand{\rk}{{\rm {rk}}}
\newcommand{\Lk}{{\rm {Lk}}}
\def\ker{{\rm Ker }}
\def\R{\mathbf R}
\newtheorem{theorem}{Theorem}%[section]
\newtheorem{proposition}[theorem]{Proposition}
\newtheorem{lemma}[theorem]{Lemma}
\newtheorem{corollary}[theorem]{Corollary}
\newtheorem{definition}{Definition}
\newenvironment{example}{\par{\bf Example.}}%{\par\noindent}
\begin{document}
\title[Two particles on a graph]{Topology of configuration space of two particles on a graph, II}
\author{M. ~Farber and E. ~Hanbury}
\address{Department of Mathematics, University of Durham, Durham DH1 3LE, UK}
\email{Michael.Farber@durham.ac.uk}
\email{Elizabeth.Hanbury@durham.ac.uk}
\date{September 25, 2010}

%    \thanks will become a 1st page footnote.
%\thanks{M. Farber was partially supported by a grant from the Israel Academy of Sciences
%and Humanities}

\begin{abstract}
This paper continues the investigation of the configuration space of two distinct points on a graph.
We analyze the process of adding an additional edge to the graph and the resulting
changes in the topology of the configuration space. We introduce a linking bilinear form on the homology group of the graph with values in the cokernel of the intersection form
(introduced in Part I of this work).
For a large class of graphs, which we call \emph{mature} graphs, we give explicit expressions for the homology groups of the configuration space. We show that under a simple condition, adding an edge to a mature graph yields another mature graph.

%MSC: 55R80; 57M15
\end{abstract}

\maketitle
\section{Introduction}

Denote by $F(X, n)$ the space of configurations of $n$ distinct points lying in a topological space $X$.
The configuration spaces $F(X, n)$, first introduced by Fadell and Neuwirth in \cite{FN}, play an important role in modern topology and its applications. The topology of
$F(X, n)$ under various assumptions on $X$ was studied in \cite{Ar}, \cite{Co}, \cite{Va}, \cite{To}.

Recently, important progress in the analysis of the topology of configuration spaces of graphs was made in the work of A. Abrams \cite{Ab} and D. Farley and L. Sabalka \cite{FS1}, \cite{FS2}, \cite{FS3}, \cite{FS4}. The cohomology algebras of unordered configuration spaces of trees were computed; the case of two point configuration spaces of trees was studied in \cite{Far}.

In this paper, which continues \cite{BF}, we study the special case $F(\Gamma, 2)$ where $\Gamma$ is a finite graph. The spaces $F(\Gamma, 2)$ appear in topological robotics as configuration spaces of two objects moving along a one-dimensional network without collisions, see \cite{Gr}, \cite{GK}, \cite{Far}, \cite{Far1}. The space $$F(X, 2)=X\times X - \Delta_X$$ is also known under the name of \lq\lq deleted product\rq\rq; the deleted products of graphs were studied in \cite{Cop}, \cite{CP}, \cite{Patty61} and \cite{Patty62}. The reader should be warned that many statements made by A.H. Copeland and C. W. Patty in
\cite{Cop}, \cite{CP}, \cite{Patty61}, \cite{Patty62}
 are incorrect.

In the first part of this work \cite{BF} the main emphasis was on planar graphs; in this paper we focus predominantly on graphs which are non-planar.

The symbol $H_\ast(X)$ denotes homology groups with integral coefficients.

Let $\Gamma$ be a connected finite graph\footnote{In this paper the term graph means a 1-dimensional simplicial complex.}. Consider the inclusion $\alpha: F(\Gamma, 2) \to \Gamma\times \Gamma$ and the induced homomorphism
$$\alpha_\ast: H_1(F(\Gamma, 2)) \to H_1(\Gamma\times \Gamma).$$ We know that $\alpha_\ast$ is an epimorphism if $\Gamma$ is not homeomorphic to the circle, see Proposition 1.3 from \cite{BF}.
\begin{definition}\label{defmature}
We say that a finite connected graph $\Gamma$ which is not homeomorphic to the interval $[0,1]$ is mature if the homomorphism $\alpha_\ast $ is an isomorphism.
\end{definition}
The term \lq\lq mature\rq\rq\, intends to emphasize that this property is common to all \lq\lq large, well-developed\rq\rq\, graphs. The results presented in this paper justify this intuitive statement.

No planar graph can be mature, see \cite{BF}, Corollary 7.2. A mature graph cannot have vertices of valence one (as follows from Theorem \ref{enlarge1}  below).
The two Kuratowski graphs $K_5$ and $K_{3,3}$ are mature as shown in \cite{BF}, \S 4. The property of a graph to be mature is a topological property,
i.e. it is invariant under subdivisions of the graph.

For a mature graph $\Gamma$ one has
\begin{eqnarray}\label{dimone}
b_1(F(\Gamma, 2)) = 2b_1(\Gamma)
\end{eqnarray}
and the second Betti number $b_2(F(\Gamma, 2))$ equals
\begin{eqnarray}\label{dimtwo}
b_1(\Gamma)^2-b_1(\Gamma) +1-\sum_{v\in V(\Gamma)}(\mu(v)-1)(\mu(v)-2),
\end{eqnarray}
see \cite{BF}, \S 4. Here $V(\Gamma)$ denotes the set of vertices in $\Gamma$ and $\mu(v)$ is the valence of a vertex $v \in V(\Gamma)$. Furthermore, when $\Gamma$ is mature, 
$H_1(F(\Gamma,2))$ and $H_2(F(\Gamma,2))$ are free abelian, see Proposition \ref{prop1}. Thus, we completely describe the homology groups of the configuration space 
$F(\Gamma,2)$ for any mature graph $\Gamma$.

The following theorem illustrates the results of this paper:

\begin{theorem}\label{thm0} Let $\Gamma$ be mature and let $\hat \Gamma=\Gamma\cup e$ be obtained from $\Gamma$ by adding an edge connecting two vertices $u, v\in \Gamma$.
If the complement $\Gamma-\{u, v\}$ is connected then $\hat \Gamma$ is mature as well.
\end{theorem}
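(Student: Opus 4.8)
The plan is to prove that the inclusion‐induced map $\alpha_\ast\colon H_1(F(\hat\Gamma,2))\to H_1(\hat\Gamma\times\hat\Gamma)$ is an isomorphism. Since $\Gamma$ is mature it is non‑planar (\cite{BF}, Corollary 7.2), so $b_1(\Gamma)\ge 1$ and $\hat\Gamma$ is homeomorphic neither to an interval nor to a circle; hence Proposition 1.3 of \cite{BF} already shows that $\alpha_\ast$ is an epimorphism, and it remains only to prove that $\alpha_\ast$ is injective on $H_1(F(\hat\Gamma,2))$. As maturity is invariant under subdivision, I may first subdivide $\Gamma$ and the new edge so that $F(\Gamma,2)\hookrightarrow F(\hat\Gamma,2)$ is modelled, up to homotopy, by an inclusion of finite CW complexes (e.g. Abrams' discretised configuration complexes \cite{Ab}), which makes the relative homology groups and the excisions used below unproblematic.

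Next I would compare the long exact homology sequences of the pairs $(F(\hat\Gamma,2),F(\Gamma,2))$ and $(\hat\Gamma\times\hat\Gamma,\Gamma\times\Gamma)$ via $\alpha$. The right–hand pair is transparent: since $\Gamma\hookrightarrow\hat\Gamma$ induces a split monomorphism on $H_1$ with free cokernel $\mathbf Z$, the Künneth formula gives that $H_1(\Gamma\times\Gamma)\hookrightarrow H_1(\hat\Gamma\times\hat\Gamma)$, so the connecting homomorphism $\partial\colon H_2(\hat\Gamma\times\hat\Gamma,\Gamma\times\Gamma)\to H_1(\Gamma\times\Gamma)$ vanishes, and moreover $H_1(\hat\Gamma\times\hat\Gamma,\Gamma\times\Gamma)\cong\mathbf Z^2$, generated by $[\hat c]\times[pt]$ and $[pt]\times[\hat c]$ where $\hat c$ is a loop of $\hat\Gamma$ through the new edge $e$. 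A diagram chase in the four consecutive terms
\[
H_2(F(\hat\Gamma,2),F(\Gamma,2))\to H_1(F(\Gamma,2))\to H_1(F(\hat\Gamma,2))\to H_1(F(\hat\Gamma,2),F(\Gamma,2)),
\]
together with their images in the corresponding terms for $(\hat\Gamma\times\hat\Gamma,\Gamma\times\Gamma)$, then shows the following. Using that the vertical map $H_1(F(\Gamma,2))\to H_1(\Gamma\times\Gamma)$ is an isomorphism — this is maturity of $\Gamma$, and $H_1(F(\Gamma,2))$ is free abelian of rank $2b_1(\Gamma)$ by (\ref{dimone}) and Proposition \ref{prop1} — and that the bottom connecting map $\partial$ vanishes, injectivity of $\alpha_\ast$ on $H_1(F(\hat\Gamma,2))$ follows as soon as one knows that
\[
\alpha_\ast\colon H_1(F(\hat\Gamma,2),F(\Gamma,2))\longrightarrow H_1(\hat\Gamma\times\hat\Gamma,\Gamma\times\Gamma)\cong\mathbf Z^2
\]
is injective. (No statement about the relative $H_2$ is needed, precisely because $\partial$ is zero on the product side; this is what makes the chase work without any hypothesis on that map.)

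The remaining, and principal, step is to analyse the pair $(F(\hat\Gamma,2),F(\Gamma,2))$ and the displayed map, and it is here that the hypothesis that $\Gamma-\{u,v\}$ is connected enters; this is the heart of the ``adding an edge'' analysis of the paper (cf. Theorem \ref{enlarge1}). The relative cells are the configurations in which at least one point lies on $e$; splitting off, by the swap symmetry, the configurations whose first point lies on $e$ and projecting that piece onto $e\cong[0,1]$, one obtains a model built from a trivial bundle over the interior with fibre $\simeq\Gamma$, glued at the two ends to neighbourhoods of the fibres over $u$ and $v$, which are homotopy equivalent to $\Gamma-\{u\}$ and $\Gamma-\{v\}$, the part coming from $F(\Gamma,2)$ being where the moving point sits at $u$ or $v$. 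The resulting Mayer–Vietoris sequence expresses $H_1(F(\hat\Gamma,2),F(\Gamma,2))$ in terms of $H_0$ and $H_1$ of $\Gamma$, $\Gamma-\{u\}$, $\Gamma-\{v\}$ and the inclusion‑induced maps among them. Since $\Gamma$ is mature it has no vertex of valence one, so connectedness of $\Gamma-\{u,v\}$ forces $\Gamma-\{u\}$ and $\Gamma-\{v\}$ to be connected as well; one then checks that the ``linking'' classes — those obtained by trapping the two points on opposite sides of the pair $\{u,v\}$ — do not occur, so that $H_1(F(\hat\Gamma,2),F(\Gamma,2))$ is free of rank two and $\alpha_\ast$ maps it isomorphically onto $\mathbf Z^2$. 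Feeding this back into the ladder gives injectivity of $\alpha_\ast$ on $H_1(F(\hat\Gamma,2))$, hence (combined with the epimorphism statement above) $\alpha_\ast$ is an isomorphism and $\hat\Gamma$ is mature. The main obstacle is exactly this last paragraph: pinning down the relative homology and showing that, under the connectedness hypothesis, adding $e$ creates no one‑dimensional homology in the configuration space beyond the two evident classes ``one point runs around a loop through $e$ while the other rests at a fixed point of $\Gamma-\{u,v\}$''; were $\Gamma-\{u,v\}$ disconnected the map $\alpha_\ast$ on the relative $H_1$ would acquire a kernel, yielding an extra, undetected class in $H_1(F(\hat\Gamma,2))$, which is the obstruction the hypothesis removes.
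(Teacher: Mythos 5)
Your argument is correct in outline, and at its core it is the same computation as the paper's, just packaged differently. The paper proves Theorem \ref{thm0} via Corollary \ref{matureint} ($\Gamma$ mature $\Leftrightarrow$ $Q_\Gamma=0$) and the four-term exact sequence of Theorem \ref{thm14}, $0\to A+\tau A\to Q_\Gamma\to Q_{\hat\Gamma}\to G\to 0$ with $\rk G=2b_0(\Gamma_0)-2$; that sequence is extracted from a $3\times 3$ diagram whose middle column is the long exact sequence of the pair $(D(\hat\Gamma,2),D(\Gamma,2))$, using the decomposition $\hat D=D\cup e\Gamma_0\cup\Gamma_0e$ to get $H_i(\hat D,D)\cong H_{i-1}(\Gamma_0)\oplus H_{i-1}(\Gamma_0)$. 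Your ladder between the pairs $(F(\hat\Gamma,2),F(\Gamma,2))$ and $(\hat\Gamma\times\hat\Gamma,\Gamma\times\Gamma)$ is exactly that diagram with the left-hand (kernel) column discarded, and your chase is sound: surjectivity is free from Proposition 1.3 of \cite{BF}, and injectivity of $\alpha_\ast$ on $H_1(F(\hat\Gamma,2))$ follows from maturity of $\Gamma$, injectivity of $H_1(\Gamma\times\Gamma)\to H_1(\hat\Gamma\times\hat\Gamma)$, and injectivity of the relative map $H_1(F(\hat\Gamma,2),F(\Gamma,2))\to H_1(\hat\Gamma\times\hat\Gamma,\Gamma\times\Gamma)\cong\Z^2$ (your remark that the vanishing of $\partial$ on the product side is needed is actually superfluous). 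What your route buys is brevity: you never need the linking homomorphism or $Q_\Gamma$. What it loses is the quantitative information of Theorem \ref{thm14} (the kernel $A+\tau A$ and the rank of $G$), which the paper reuses elsewhere, e.g.\ in Proposition \ref{prop: non-mature}. Two corrections to your final paragraph: the clean way to pin down the relative homology is not the fibration/Mayer--Vietoris picture over $e$ but the cellular one --- after subdivision $C_\ast(\hat D,D)\cong C_{\ast-1}(\Gamma_0)\oplus C_{\ast-1}(\Gamma_0)$, so $H_1(\hat D,D)\cong H_0(\Gamma_0)^2$ maps isomorphically to $\Z^2$ precisely when $\Gamma_0\simeq\Gamma-\{u,v\}$ is connected; and the ``linking'' classes are irrelevant here --- they live in $H_1(\Gamma_0)$ and govern $\ker\bigl(i_\ast\colon H_1(F(\Gamma,2))\to H_1(F(\hat\Gamma,2))\bigr)$ and the relative $H_2$, neither of which your chase touches, whereas the failure when $\Gamma-\{u,v\}$ is disconnected comes purely from $H_0(\Gamma_0)$ having rank greater than one.
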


Applying this theorem inductively, one may find examples of many mature graphs. In particular we show that complete graphs $K_n$ and bipartite graphs $K_{p,q}$ are mature assuming that $n \ge 5$ and $p\ge3$, $q\ge 3$; this fact was also established by K. Barnett (unpublished) by a different method.

Comparing the results of \cite{BF} and the present paper we see that there is a trichotomy reflecting properties of $F(\Gamma, 2)$ for various classes of graphs: (1) in the case of trees the homomorphism $\alpha_\ast:H_1(F(\Gamma, 2))\to H_1(\Gamma \times \Gamma)$ has a large kernel; (2) if $\Gamma$ is planar, all its vertices have valence $\ge 3$ (and some other technical conditions are satisfied, see Corollary 7.4 in \cite{BF}) then $\alpha_\ast$ has kernel $\Z$; (3) for mature graphs $\alpha_\ast$ is an isomorphism.

The paper is organized as follows. In section 2 we recall the intersection form introduced in \cite{BF} and the formulae for the Betti numbers of $F(\Gamma,2)$ given in terms of this form. In section 3 we look at how the topology of $F(\Gamma,2)$ is changed under two elementary operations on graphs. In section 4 we introduce the \emph{linking homomorphisms} and in sections 5 and 6 we use these to describe what happens to $F(\Gamma,2)$ when we attach an extra edge to $\Gamma$. In section 7 we use these results to investigate necessary and sufficient conditions for a graph to be mature and we give methods for constructing mature graphs. In the final section, section 8, we mention some open questions and conjectures.

We would like to thank the referee for reading the paper very carefully and making many useful comments.

\section{The intersection form}

In this section we recall a construction from \cite{BF}.

Let $\Gamma$ be a connected finite graph. For $x\in\Gamma$ the notation $\supp\{x\}$ stands for the closure of the cell containing $x$.
%The symbol $F(\Gamma, 2)$ denotes the configuration space of two points in $\Gamma$, i.e. $F(\Gamma, 2)=\{(x, y)\in \Gamma\times \Gamma; \, x\not=y\}$.
The subset $D(\Gamma, 2)\subset F(\Gamma, 2)$ is known as the discrete configuration space; it consists of all pairs $(x, y)\in \Gamma\times \Gamma$ with $\supp\{x\}\cap \supp\{y\}=\emptyset$. It is well-known that $F(\Gamma, 2)$ deformation retracts onto $D(\Gamma, 2)$.

Paper \cite{BF} introduced an
intersection form
\begin{eqnarray}\label{intersection}
I=I_\Gamma: H_1(\Gamma) \otimes H_1(\Gamma) \to H_2(N_\Gamma, \partial N_\Gamma)
\end{eqnarray}
which helps to study the homology of the configuration space $F(\Gamma, 2)$.
Here $N_\Gamma$ denotes the neighborhood of the diagonal $\Gamma \subset \Gamma\times \Gamma$ defined as the set of all pairs $(x,y)\in \Gamma\times \Gamma$
such that $x$ and $y$ admit arbitrarily small perturbations $x'$ and $y'$ with $\supp\{x'\}\cap \supp\{y'\}\not=\emptyset$. This neighbourhood can also be described as
\[
N_\Gamma = \overline{\Gamma \times \Gamma - D(\Gamma,2)}.
\]
The boundary $\partial N_\Gamma\subset N_\Gamma$ consists of all pairs $(x,y)\in N_\Gamma$ admitting an arbitrarily small perturbation $(x',y')$ which does not lie in $N_\Gamma$.

The intersection form (\ref{intersection}) is defined as follows. Consider the injection $$j: \Gamma\times \Gamma \to (\Gamma\times \Gamma,D(\Gamma, 2))$$
and the induced homomorphism
$$j_\ast: H_2(\Gamma\times \Gamma) \to H_2(\Gamma\times \Gamma,D(\Gamma, 2))$$ on the two-dimensional homology. The group $H_2(\Gamma\times \Gamma)$
can be identified with $H_1(\Gamma)\otimes H_1(\Gamma)$ (by the K\"unneth theorem) and the group $H_2(\Gamma\times\Gamma, D(\Gamma, 2))$ can be identified with $H_2(N_\Gamma, \partial N_\Gamma)$ (by excision).
After these identifications $j_\ast$ turns into the homomorphism (\ref{intersection}).

The intersection form can also be described geometrically as follows. Let $z = \sum n_i e_i$ and $z' = \sum m_j e_j'$ be cycles in $\Gamma$, where $e_i$ and $e'_j$ are oriented edges of $\Gamma$. Then
\[
I(z \otimes z') = \sum_{(i,j) \in A} n_i m_j (e_i e'_j),
\]
where $A$ is the set of pairs $(i,j)$ such that $e_i \cap e'_j \neq \emptyset$. See \cite{BF}, \S 3.

For $\Gamma \not\cong S^1$, the intersection form enters the exact sequence
\begin{eqnarray}\label{secex}
\begin{array}{c}
 0\to H_2(F(\Gamma, 2)) \stackrel{\alpha_\ast}\to H_1(\Gamma) \otimes H_1(\Gamma) \stackrel {I_\Gamma}\to H_2(N_\Gamma, \partial N_\Gamma)\\ \\
 \stackrel \partial \to H_1(F(\Gamma, 2))\stackrel {\alpha_\ast} \to H_1(\Gamma\times\Gamma)\to 0.\end{array}\end{eqnarray}

It is convenient to introduce a shorthand notation
\begin{eqnarray}\label{qgamma}
Q_\Gamma= \coker I_\Gamma.
\end{eqnarray}
The sequence (\ref{secex}) gives a short exact sequence
\begin{eqnarray}\label{secq}
0\to Q_\Gamma \to H_1(D(\Gamma, 2)) \stackrel{\alpha_\ast}\to H_1(\Gamma\times \Gamma) \to 0,
\end{eqnarray}
and thus $Q_\Gamma$ can be regarded as a subgroup of $H_1(F(\Gamma, 2))$.
Note that the involution $\tau: F(\Gamma, 2) \to F(\Gamma, 2)$ given by $\tau(x,y) =(y,x)$ acts on $(N_\Gamma, \partial N_\Gamma)$ and
for $z, z'\in H_1(\Gamma)$ one has $I_\Gamma (z\otimes z')=-\tau_\ast I_\Gamma(z'\otimes z)$, see \cite{BF}, Lemma 2.2. It follows that (\ref{secq}) is an exact sequence of
$\Z[\Z_2]$-modules with $\tau$ acting on $H_1(\Gamma)\otimes H_1(\Gamma)$ by $z\otimes z'\mapsto -z'\otimes z$.

The relevance of the intersection form $I_\Gamma$ to the problem of calculating the homology of the configuration space $F(\Gamma, 2)$ can be illustrated by the following statement (see \cite{BF}, Proposition 2.3):

\begin{proposition}\label{prop1} Let $\Gamma$ be a finite connected graph which is not homeomorphic to the circle. Then
 the group $H_2(F(\Gamma, 2))$ is isomorphic to the kernel of the intersection form
\begin{eqnarray}
H_2(F(\Gamma, 2)) \cong \ker (I_\Gamma)
\end{eqnarray}
and the group $H_1(F(\Gamma, 2))$ is isomorphic to the direct sum
\begin{eqnarray}\label{six}H_1(F(\Gamma, 2)) \cong Q_\Gamma \oplus H_1(\Gamma) \oplus H_1(\Gamma).
\end{eqnarray}
 \end{proposition}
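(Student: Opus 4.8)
The plan is to extract everything from the long exact sequence (\ref{secex}) together with the involution-equivariance statement recalled just above the proposition. First, the leftmost segment of (\ref{secex}) reads
\[
0\to H_2(F(\Gamma,2)) \stackrel{\alpha_\ast}\to H_1(\Gamma)\otimes H_1(\Gamma) \stackrel{I_\Gamma}\to H_2(N_\Gamma,\partial N_\Gamma),
\]
so exactness at $H_1(\Gamma)\otimes H_1(\Gamma)$ gives immediately that $\alpha_\ast$ maps $H_2(F(\Gamma,2))$ isomorphically onto $\ker I_\Gamma$; this is the first claim, requiring no further work. (One should note the degree shift: the $\alpha_\ast$ appearing here is the map induced by the inclusion $F(\Gamma,2)\hookrightarrow\Gamma\times\Gamma$ on $H_2$, via the identification $H_2(\Gamma\times\Gamma)\cong H_1(\Gamma)\otimes H_1(\Gamma)$.)

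For the second claim I would start from the short exact sequence (\ref{secq}),
\[
0\to Q_\Gamma \to H_1(D(\Gamma,2)) \stackrel{\alpha_\ast}\to H_1(\Gamma\times\Gamma)\to 0,
\]
and recall that $F(\Gamma,2)$ deformation retracts onto $D(\Gamma,2)$, so $H_1(D(\Gamma,2))\cong H_1(F(\Gamma,2))$, and that by the K\"unneth theorem $H_1(\Gamma\times\Gamma)\cong H_1(\Gamma)\oplus H_1(\Gamma)$, a free abelian group. The desired conclusion (\ref{six}) is then precisely the assertion that this extension splits. Since the quotient $H_1(\Gamma)\oplus H_1(\Gamma)$ is free abelian, a set-theoretic lift of a basis extends to a homomorphic splitting $H_1(\Gamma\times\Gamma)\to H_1(F(\Gamma,2))$, so the sequence splits and $H_1(F(\Gamma,2))\cong Q_\Gamma\oplus H_1(\Gamma)\oplus H_1(\Gamma)$ as abelian groups.

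The one point that deserves care — and the only place I expect any real obstacle — is freeness of $H_1(\Gamma\times\Gamma)$, hence of the quotient: this is what makes the splitting automatic rather than something one must construct by hand. Here $\Gamma$ is a finite graph, so $H_1(\Gamma)$ is free abelian of rank $b_1(\Gamma)$ and $H_0(\Gamma)\cong\Z$; the K\"unneth formula then gives $H_1(\Gamma\times\Gamma)\cong (H_1(\Gamma)\otimes H_0(\Gamma))\oplus(H_0(\Gamma)\otimes H_1(\Gamma))$ with no Tor term, so it is free of rank $2b_1(\Gamma)$. Everything else is formal: exactness of (\ref{secex}), which is quoted from \cite{BF}, and the deformation retraction of $F(\Gamma,2)$ onto $D(\Gamma,2)$. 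It is worth remarking that the splitting in (\ref{six}) need not be $\Z[\Z_2]$-equivariant for the involution $\tau$; the proposition only asserts an isomorphism of abelian groups, and that is all the argument yields.
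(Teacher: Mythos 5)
Your argument is correct and is essentially the one implicit in the paper: the paper quotes this result from \cite{BF} (Proposition 2.3) rather than reproving it, but it derives the short exact sequence (\ref{secq}) from (\ref{secex}) in exactly the way you do, and the two claims then follow from exactness at $H_1(\Gamma)\otimes H_1(\Gamma)$ and from the splitting of (\ref{secq}) over the free abelian quotient $H_1(\Gamma\times\Gamma)$. Your closing remarks — that freeness of $H_1(\Gamma)$ is what makes the splitting automatic, and that the splitting need not be $\tau$-equivariant — are both accurate and worth having stated explicitly.
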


 \begin{corollary}\label{cor2} For a graph $\Gamma$ as in Proposition \ref{prop1} one has
 \begin{eqnarray}
 b_1(F(\Gamma, 2)) = 2b_1(\Gamma) + \rk\,  Q_\Gamma,
 \end{eqnarray}
 and \begin{eqnarray}\label{btwo}
b_2(F(\Gamma, 2)) =  b_1(\Gamma)^2 -b_1(\Gamma) +1 + \rk\,  Q_\Gamma - \Sigma
\end{eqnarray}  where $$\Sigma =\sum_{v\in V(\Gamma)}(\mu(v)-1)(\mu(v)-2).$$
 \end{corollary}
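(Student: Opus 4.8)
The plan is to derive both formulas from Proposition~\ref{prop1} together with one Euler-characteristic computation. For the first formula, taking ranks in the splitting (\ref{six}) (equivalently, in the short exact sequence (\ref{secq})) gives at once $b_1(F(\Gamma,2)) = 2b_1(\Gamma) + \rk Q_\Gamma$. For the second, Proposition~\ref{prop1} identifies $H_2(F(\Gamma,2))$ with $\ker I_\Gamma$, so rank--nullity applied to $I_\Gamma\colon H_1(\Gamma)\otimes H_1(\Gamma)\to H_2(N_\Gamma,\partial N_\Gamma)$ gives $b_2(F(\Gamma,2)) = b_1(\Gamma)^2 - \rk\,\im I_\Gamma$, and since $Q_\Gamma=\coker I_\Gamma$ we have $\rk\,\im I_\Gamma = \rk H_2(N_\Gamma,\partial N_\Gamma) - \rk Q_\Gamma$. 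Hence the corollary reduces to the single assertion
\[
\rk H_2(N_\Gamma,\partial N_\Gamma) = b_1(\Gamma) - 1 + \Sigma ,
\]
which is where the combinatorics of $\Gamma$ enters.

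To prove this I would first compute the Euler characteristic of the pair and then argue that it already equals $\rk H_2$. By the excision isomorphism recalled in Section~2, $H_\ast(N_\Gamma,\partial N_\Gamma)\cong H_\ast(\Gamma\times\Gamma,D(\Gamma,2))$, and $D(\Gamma,2)$ is a subcomplex of $\Gamma\times\Gamma$ in the product cell structure, so $\chi(N_\Gamma,\partial N_\Gamma) = \chi(\Gamma\times\Gamma)-\chi(D(\Gamma,2)) = \chi(\Gamma)^2 - \chi(D(\Gamma,2))$. The cells of $D(\Gamma,2)$ are the products $\sigma\times\tau$ of cells of $\Gamma$ with disjoint closures: the $0$-cells are ordered pairs of distinct vertices; the $1$-cells are a vertex together with a non-incident edge, in either order; and the $2$-cells are ordered pairs of edges sharing no vertex. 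Counting these (using that $\Gamma$ is simplicial, so two distinct edges meet in at most one vertex) and simplifying with $\sum_v \mu(v)=2|E|$ and $b_1(\Gamma)=1-|V|+|E|$, the quadratic terms in $|V|$ and $|E|$ cancel and one is left precisely with $\chi(N_\Gamma,\partial N_\Gamma)=b_1(\Gamma)-1+\Sigma$.

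Finally I would check that $H_0(N_\Gamma,\partial N_\Gamma)$ and $H_1(N_\Gamma,\partial N_\Gamma)$ both have rank $0$; since $N_\Gamma$ is $2$-dimensional this forces $\rk H_2(N_\Gamma,\partial N_\Gamma)=\chi(N_\Gamma,\partial N_\Gamma)$ and finishes the argument. For this, note that $D(\Gamma,2)$ is connected (true for any connected graph not homeomorphic to a segment), which kills $H_0(\Gamma\times\Gamma,D(\Gamma,2))$ and, via the long exact sequence of the pair, identifies $H_1(\Gamma\times\Gamma,D(\Gamma,2))$ with $\coker\bigl(H_1(D(\Gamma,2))\to H_1(\Gamma\times\Gamma)\bigr)$; after the deformation retraction $F(\Gamma,2)\simeq D(\Gamma,2)$ this map is $\alpha_\ast$, which is surjective for $\Gamma\not\cong S^1$ by Proposition~1.3 of \cite{BF}. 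I do not expect a serious obstacle: the only delicate points are the bookkeeping in the cell count of $D(\Gamma,2)$ and remembering to exclude the interval (so that $D(\Gamma,2)$ is connected and the $b_2$ formula holds). Alternatively, one can bypass the middle paragraph entirely by quoting from \cite{BF} that $H_2(N_\Gamma,\partial N_\Gamma)$ is free abelian of rank $b_1(\Gamma)-1+\Sigma$.
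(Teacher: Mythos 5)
Your proposal is correct and follows the paper's own route: both formulas are reduced, via Proposition \ref{prop1} and rank--nullity for $I_\Gamma$, to the single fact that $H_2(N_\Gamma,\partial N_\Gamma)$ has rank $b_1(\Gamma)-1+\Sigma$, which the paper simply imports as Corollary 2.5 of \cite{BF} (exactly the alternative you mention in your last sentence). Your Euler-characteristic re-derivation of that rank --- the cell count of $D(\Gamma,2)$ and the vanishing in ranks of $H_0$ and $H_1$ of the pair via connectivity of $D(\Gamma,2)$ and surjectivity of $\alpha_\ast$ --- checks out and serves as a correct self-contained substitute for the citation.
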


 In the last formula $V(\Gamma)$ denotes the set of vertices of $\Gamma$ and $\mu(v)$ denotes the number of edges incident to $v\in V(\Gamma)$ (i.e. the valence of $v$). In the exceptional case, $\Gamma \cong S^1$, it is easy to see that $F(\Gamma,2) \simeq S^1$ so the first and second Betti numbers are $1$ and $0$ respectively.

 Corollary \ref{cor2} follows from Proposition \ref{prop1} and from Corollary 2.5 in \cite{BF} giving explicitly the rank of the group $H_2(N_\Gamma, \partial N_\Gamma)$.
 By Corollary \ref{cor2}, knowing the rank of the cokernel of the intersection form $I_\Gamma$ is equivalent to knowing the Betti numbers of $F(\Gamma, 2)$.

 It should be noted that in many examples the intersection form $I_\Gamma$ is epimorphic or has a small cokernel, see \cite{BF}. In this respect we may mention Theorem 7.3 from \cite{BF} which deals with the case of planar graphs with all vertices of valence $\ge 3$.

 \begin{corollary}\label{matureint}
 A graph is mature (see Definition \ref{defmature}) if and only if $Q_\Gamma=0$, i.e. if the intersection form $I_\Gamma$ is surjective.
 \end{corollary}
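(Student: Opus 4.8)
The plan is to derive the corollary directly from the short exact sequence (\ref{secq}); essentially nothing is needed beyond unwinding the definitions and matching up the maps. Throughout I work with a graph $\Gamma$ in the scope of this section, i.e. with $\Gamma\not\cong S^1$ (so that (\ref{secex}) and (\ref{secq}) are available), and recall that $\Gamma\cong[0,1]$ is excluded from being mature by Definition \ref{defmature}; so I may assume $\Gamma\not\cong S^1$ and $\Gamma\not\cong[0,1]$.

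The steps I would carry out are: first, use that $F(\Gamma,2)$ deformation retracts onto $D(\Gamma,2)$ to identify $H_1(F(\Gamma,2))\cong H_1(D(\Gamma,2))$, and check that under this identification the homomorphism $\alpha_\ast\colon H_1(F(\Gamma,2))\to H_1(\Gamma\times\Gamma)$ of Definition \ref{defmature} is carried to the homomorphism $\alpha_\ast\colon H_1(D(\Gamma,2))\to H_1(\Gamma\times\Gamma)$ appearing in (\ref{secq}) --- both are induced by inclusion into $\Gamma\times\Gamma$, and the retraction $F(\Gamma,2)\to D(\Gamma,2)$ is compatible with these inclusions up to homotopy. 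Second, read off from the exactness of (\ref{secq}) that $\ker\alpha_\ast\cong Q_\Gamma$ and that $\alpha_\ast$ is surjective (this surjectivity is built into (\ref{secq}) and also holds by Proposition 1.3 of \cite{BF}). Hence $\alpha_\ast$ is an isomorphism if and only if $Q_\Gamma=0$. Third, invoke the definition $Q_\Gamma=\coker I_\Gamma$ from (\ref{qgamma}): $Q_\Gamma=0$ is the same as $I_\Gamma$ being surjective. Chaining these equivalences gives exactly the statement: $\Gamma$ is mature $\iff$ $Q_\Gamma=0$ $\iff$ $I_\Gamma$ is surjective.

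There is no genuine obstacle in this argument; the only points that deserve a line of care are (i) the identification of the two maps both denoted $\alpha_\ast$ via the deformation equivalence $F(\Gamma,2)\simeq D(\Gamma,2)$, which one should state explicitly since Definition \ref{defmature} is phrased for $F(\Gamma,2)$ while (\ref{secq}) is phrased for $D(\Gamma,2)$, and (ii) noting that because $\alpha_\ast$ is automatically onto $H_1(\Gamma\times\Gamma)$, the condition ``isomorphism'' collapses to ``injective'', which is precisely the vanishing of $Q_\Gamma=\ker\alpha_\ast$.
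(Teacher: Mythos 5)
Your argument is correct and is essentially the paper's own proof: the paper simply says the corollary ``follows from the exact sequence (\ref{secex})'', and your use of the derived short exact sequence (\ref{secq}) together with the definition $Q_\Gamma=\coker I_\Gamma$ is exactly that argument, spelled out with the appropriate care about identifying $F(\Gamma,2)$ with $D(\Gamma,2)$ and excluding $S^1$.
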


  This follows from the exact sequence (\ref{secex}).

  \section{Enlarging graphs, I}

Let $\Gamma'\subset \Gamma$ be a subgraph. This means that the set of vertices of $\Gamma'$ is contained in the set of vertices of $\Gamma$ and the set of edges of
$\Gamma'$ is a subset of the set of edges of $\Gamma$. Denote by $N_\Gamma$ and $N_{\Gamma'}$ the corresponding subcomplexes of $\Gamma\times \Gamma$ and $\Gamma'\times \Gamma'$ correspondingly. Recall that $N_\Gamma$ can be described as the union of all squares $ee'=e\times e'$ where $e, e'$ are (closed) edges of $\Gamma$ with $e\cap e'\not=\emptyset$. We see that $N_{\Gamma'}$ is naturally contained in $N_{\Gamma}$.

Similarly $\partial N_\Gamma$ is the union of all products $ve=v\times e$ and $ev=e\times v$ where $e\in E(\Gamma)$ and $v\in V(\Gamma)$ such that
$v$ is connected by an edge to one of the ends of $e$. Clearly, $\partial N_{\Gamma'}\subset \partial N_{\Gamma}$.

\begin{proposition}\label{prop11}
For a subgraph $\Gamma'\subset \Gamma$, the inclusion $$(N_{\Gamma'}, \partial N_{\Gamma'})\to (N_{\Gamma}, \partial N_{\Gamma})$$ induces a monomorphism
$$H_2(N_{\Gamma'}, \partial N_{\Gamma'})\to H_2(N_{\Gamma}, \partial N_{\Gamma}).$$
\end{proposition}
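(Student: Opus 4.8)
The plan is to read off $H_2(N_\Gamma,\partial N_\Gamma)$ directly from the cellular chain complex of the pair, using that $N_\Gamma$ is $2$-dimensional while $\partial N_\Gamma$ is $1$-dimensional. I would give $N_\Gamma$ the CW structure whose cells are the squares $ee'=e\times e'$ (with $e\cap e'\neq\emptyset$) together with all of their faces; then $\partial N_\Gamma$ is the subcomplex generated by the products $ve$ and $ev$ described above. Since $\partial N_\Gamma$ contains no $2$-cells and $N_\Gamma$ contains no cells of dimension $\geq 3$, one has $C_2(N_\Gamma,\partial N_\Gamma)=C_2(N_\Gamma)$, the chain complex of the pair vanishes in degrees $\geq 3$, and hence there are no relative $2$-boundaries. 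Consequently
\[
H_2(N_\Gamma,\partial N_\Gamma)=\{\,z\in C_2(N_\Gamma)\,:\,\partial z\in C_1(\partial N_\Gamma)\,\},
\]
a subgroup of the free abelian group $C_2(N_\Gamma)$; here $\partial\colon C_2(N_\Gamma)\to C_1(N_\Gamma)$ is the cellular boundary and $C_1(\partial N_\Gamma)\subset C_1(N_\Gamma)$ is the subgroup generated by the $1$-cells contained in $\partial N_\Gamma$. The same description holds verbatim for $\Gamma'$.

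Next I would use that, because $\Gamma'$ is a simplicial subcomplex of $\Gamma$, the product $\Gamma'\times\Gamma'$ is a subcomplex of $\Gamma\times\Gamma$, so that $N_{\Gamma'}\subset N_\Gamma$ and $\partial N_{\Gamma'}\subset\partial N_\Gamma$ are inclusions of CW subcomplexes. Thus $C_k(N_{\Gamma'})$ is canonically the subgroup of $C_k(N_\Gamma)$ generated by those cells of $N_\Gamma$ that lie in $N_{\Gamma'}$, one has $C_1(\partial N_{\Gamma'})\subset C_1(\partial N_\Gamma)$, and the cellular boundary maps are compatible with these inclusions. In particular, under the identifications of the previous paragraph, the homomorphism of the Proposition is the map induced by the inclusion $C_2(N_{\Gamma'})\hookrightarrow C_2(N_\Gamma)$.

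Finally, if $z\in C_2(N_{\Gamma'})$ satisfies $\partial z\in C_1(\partial N_{\Gamma'})$, then, viewed inside $C_2(N_\Gamma)$, its boundary $\partial z$ still lies in $C_1(\partial N_{\Gamma'})$, hence in $C_1(\partial N_\Gamma)$; so $z$ represents an element of $H_2(N_\Gamma,\partial N_\Gamma)$. Thus the inclusion-induced homomorphism $H_2(N_{\Gamma'},\partial N_{\Gamma'})\to H_2(N_\Gamma,\partial N_\Gamma)$ is nothing but the restriction of the inclusion of free abelian groups $C_2(N_{\Gamma'})\hookrightarrow C_2(N_\Gamma)$, and in particular it is a monomorphism. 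I do not expect a genuine obstacle in this argument; the only point needing a little care is the bookkeeping of CW structures in the second paragraph — namely that $N_{\Gamma'}$ really is a subcomplex of $N_\Gamma$, so that the chain-level inclusion is a chain map — which is immediate once $\Gamma'$ is regarded as a simplicial subcomplex of $\Gamma$.
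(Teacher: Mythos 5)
Your argument is correct and is essentially the paper's own proof: both identify $H_2(N_\Gamma,\partial N_\Gamma)$ with the group of relative $2$-cycles inside the free abelian group $C_2(N_\Gamma,\partial N_\Gamma)$ (there being no $3$-cells, hence no relative $2$-boundaries), and then observe that the induced map on $H_2$ is the restriction of the injective inclusion of $2$-chain groups. The only cosmetic difference is that you phrase the relative cycles as $\{z\in C_2(N_\Gamma):\partial z\in C_1(\partial N_\Gamma)\}$ while the paper says ``kernel of $\partial\colon C_2(N_\Gamma,\partial N_\Gamma)\to C_1(N_\Gamma,\partial N_\Gamma)$''; these are the same subgroup.
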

\begin{proof}
Consider the cellular chain complex $C_\ast(N_\Gamma, \partial N_\Gamma)$. The group $C_2(N_\Gamma, \partial N_\Gamma)$ is free abelian generated by ordered pairs $ee'$ where $e, e'$ are edges of $\Gamma$ such that $e\cap e'\not=\emptyset$. Moreover, since $N_\Gamma$ has dimension 2 the homology group $H_2(N_\Gamma, \partial N_\Gamma)$ coincides with the kernel of the boundary homomorphism
$\partial: C_2(N_\Gamma, \partial N_{\Gamma}) \to C_1(N_\Gamma, \partial N_{\Gamma})$.

The chain complex $C_\ast(N_{\Gamma'}, \partial N_{\Gamma'})$ admits a similar description and therefore the inclusion $(N_{\Gamma'}, \partial N_{\Gamma'})\to (N_{\Gamma}, \partial N_{\Gamma})$ induces a monomorphism of chain complexes
$C_\ast(N_{\Gamma'}, \partial N_{\Gamma'})\to C_\ast (N_{\Gamma}, \partial N_{\Gamma}).$ Therefore, the induced homomorphism
$H_2(N_{\Gamma'}, \partial N_{\Gamma'})\to H_2(N_{\Gamma}, \partial N_{\Gamma})$ is a mono\-morphism, as
the restriction of the chain homomorphism
$C_2(N_{\Gamma'}, \partial N_{\Gamma'})$ $\to C_2(N_{\Gamma}, \partial N_{\Gamma})$.
\end{proof}

\begin{corollary}\label{cor22}
For any two cycles $z, z'\in H_1(\Gamma')$ lying in a subgraph $\Gamma'\subset \Gamma$ the intersection $I(z\otimes z') \in H_2(N_{\Gamma'}, \partial N_{\Gamma'})$ vanishes if and only if the intersection of their images $I(i_\ast(z)\otimes i_\ast(z')) \in H_2(N_\Gamma, \partial N_\Gamma)$ vanishes.
Here $i$ denotes the inclusion $\Gamma' \to \Gamma$.
\end{corollary}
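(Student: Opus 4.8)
The plan is to deduce the corollary from the injectivity already established in Proposition \ref{prop11}, once we know that the intersection form behaves naturally under subgraph inclusions. Write $\iota_\ast\colon H_2(N_{\Gamma'},\partial N_{\Gamma'})\to H_2(N_{\Gamma},\partial N_{\Gamma})$ for the homomorphism induced by the inclusion of pairs; Proposition \ref{prop11} says $\iota_\ast$ is a monomorphism. The corollary is then immediate once we prove the naturality identity
\[
\iota_\ast\bigl(I_{\Gamma'}(z\otimes z')\bigr)=I_\Gamma\bigl(i_\ast(z)\otimes i_\ast(z')\bigr)\qquad\text{for all }z,z'\in H_1(\Gamma'),
\]
since injectivity of $\iota_\ast$ forces the left-hand side to vanish exactly when $I_{\Gamma'}(z\otimes z')=0$, while the right-hand side is by definition $I(i_\ast(z)\otimes i_\ast(z'))$.

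To establish the displayed identity I would argue on the chain level using the geometric description of the intersection form recalled above. As in the proof of Proposition \ref{prop11}, $C_2(N_{\Gamma'},\partial N_{\Gamma'})$ is the free abelian group on the ordered pairs $ee'$ of edges of $\Gamma'$ with $e\cap e'\neq\emptyset$, and the inclusion $(N_{\Gamma'},\partial N_{\Gamma'})\to(N_{\Gamma},\partial N_{\Gamma})$ realizes this as a subset of the free generators of $C_2(N_{\Gamma},\partial N_{\Gamma})$; since both relative complexes are $2$-dimensional, $H_2$ is just the kernel of $\partial_2$ in each case. Now pick cycles $z=\sum n_ie_i$ and $z'=\sum m_je'_j$ supported on the edges of $\Gamma'$; the same chains represent $i_\ast(z)$ and $i_\ast(z')$ in $\Gamma$. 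The only pairs $(i,j)$ contributing to $I_\Gamma(i_\ast(z)\otimes i_\ast(z'))$ are those with $n_im_j\neq 0$, hence with both $e_i$ and $e'_j$ in $\Gamma'$, and two such edges meet in $\Gamma$ precisely when they meet in $\Gamma'$. Therefore the chain $\sum_{(i,j)\in A}n_im_j(e_ie'_j)$ representing $I_\Gamma(i_\ast(z)\otimes i_\ast(z'))$ is literally the image of the chain representing $I_{\Gamma'}(z\otimes z')$ under the chain-level inclusion, which is the claimed identity.

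A more structural variant of the same argument would instead invoke functoriality of homology applied to the commuting square of inclusions
\[
\begin{array}{ccc}
\Gamma'\times\Gamma' & \longrightarrow & (\Gamma'\times\Gamma',\,D(\Gamma',2))\\[2pt]
\downarrow & & \downarrow\\[2pt]
\Gamma\times\Gamma & \longrightarrow & (\Gamma\times\Gamma,\,D(\Gamma,2))
\end{array}
\]
(noting that $D(\Gamma',2)=(\Gamma'\times\Gamma')\cap D(\Gamma,2)$, since the support of a point of $\Gamma'$ is the same computed in $\Gamma'$ or in $\Gamma$, so the right vertical arrow is a genuine map of pairs), and then transporting the resulting square on $H_2$ through the K\"unneth and excision identifications that define $I_\Gamma$. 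The only point that needs care — and it is the mild ``obstacle'' of the proof — is checking that these identifications are natural with respect to $i$: that K\"unneth is natural is standard, and that the excision isomorphism $H_2(\Gamma\times\Gamma,D(\Gamma,2))\cong H_2(N_\Gamma,\partial N_\Gamma)$ is natural follows from the compatible inclusions $N_{\Gamma'}\subset N_\Gamma$ and $\partial N_{\Gamma'}\subset\partial N_\Gamma$. The chain-level argument of the previous paragraph sidesteps this bookkeeping by exhibiting the naturality directly on generators, so I would present that as the main line and relegate the diagram to a remark. Everything beyond this is formal.
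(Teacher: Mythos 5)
Your argument is correct and is exactly the one the paper intends: the corollary is deduced from the monomorphism of Proposition \ref{prop11} together with the naturality of the intersection form under the inclusion of the subcomplex $(N_{\Gamma'},\partial N_{\Gamma'})\to(N_\Gamma,\partial N_\Gamma)$, which the paper takes as immediate from the chain-level description of $I$ (and which it displays explicitly as the commutative square in the proof of Theorem \ref{enlarge1}). Your chain-level verification of that naturality is the same computation, just written out.
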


We use Corollary \ref{cor22} in the proof of the following theorem:

\begin{theorem}\label{enlarge1} (a) Let $\Gamma$ be obtained from a connected graph $\Gamma'$ by adding an edge $e$ such that $\Gamma' \cap e$ is a single point
(see Figure \ref{enlarging}, (a)). Then $H_2(F(\Gamma', 2)) \cong H_2(F(\Gamma, 2))$ and the difference $b_1(F(\Gamma,2))-b_1(F(\Gamma', 2))$ equals $2\mu(v)-2$ where $\mu(v)$ denotes the valence in $\Gamma'$ of the vertex $v$ which is incident to the newly attached edge $e$.
\begin{figure}%[h]
\begin{center}
\resizebox{12cm}{5.3cm}{\includegraphics[21,312][566,561]{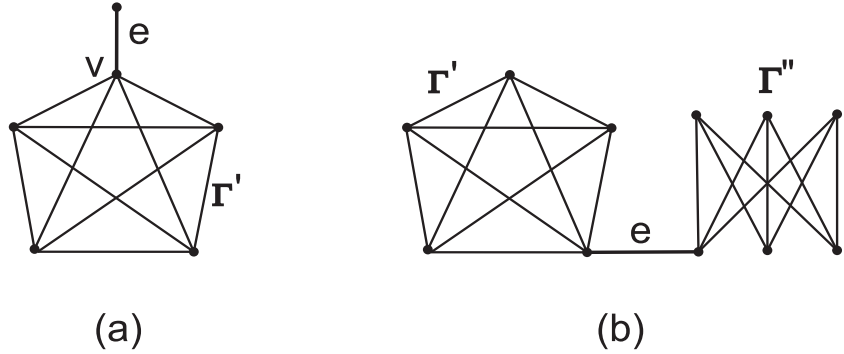}}
\end{center}
\caption{Enlarging graphs, I.}\label{enlarging}
\end{figure}

(b) Let $\Gamma$ be obtained from a graph $\Gamma'$ having two connected components
$\Gamma' = \Gamma'_1\sqcup \Gamma'_2$
by adding an edge $e$ connecting the components $\Gamma'_1$ and $\Gamma'_2$ (see Figure \ref{enlarging}, (b)). Then
\begin{eqnarray}\label{btwoagain}
\quad b_2(F(\Gamma, 2)) = b_2(F(\Gamma'_1,2)) + b_2(F(\Gamma'_2,2)) + 2b_1(\Gamma'_1)b_1(\Gamma'_2).
\end{eqnarray}
\end{theorem}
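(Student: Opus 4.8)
The plan is to compute the Betti number $b_2(F(\Gamma,2))$ directly from Corollary \ref{cor2} and to control the behaviour of each term under the operation of joining two components by an edge. Recall that for any graph $\Lambda$ not homeomorphic to $S^1$ we have
\[
b_2(F(\Lambda,2)) = b_1(\Lambda)^2 - b_1(\Lambda) + 1 + \rk Q_\Lambda - \Sigma_\Lambda,
\qquad \Sigma_\Lambda = \sum_{v\in V(\Lambda)}(\mu(v)-1)(\mu(v)-2).
\]
So I would attack the three ingredients $b_1$, $\Sigma$, and $\rk Q$ in turn. For the first Betti number: since $e$ connects two previously separate components, $\Gamma$ is homotopy equivalent to $\Gamma'_1\vee\Gamma'_2$, hence $b_1(\Gamma) = b_1(\Gamma'_1) + b_1(\Gamma'_2)$. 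For the vertex term: the only vertices whose valence changes are the two endpoints $v_1\in\Gamma'_1$ and $v_2\in\Gamma'_2$ of $e$, each of whose valence goes up by $1$; using $(\mu)(\mu-1) - (\mu-1)(\mu-2) = 2(\mu-1)$ one gets $\Sigma_\Gamma = \Sigma_{\Gamma'_1} + \Sigma_{\Gamma'_2} + 2(\mu(v_1)-1) + 2(\mu(v_2)-1)$, where the valences are taken in $\Gamma'_1$ and $\Gamma'_2$ respectively. (One must be slightly careful about low-valence endpoints, e.g. if $v_i$ was an isolated vertex or a valence-one vertex, but the formula $2(\mu-1)$ still records the correct increment.)

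The heart of the matter is the cokernel term, and here I would use the intersection form together with Corollary \ref{cor22}. I claim $\rk Q_\Gamma = \rk Q_{\Gamma'_1} + \rk Q_{\Gamma'_2} + 2\,b_1(\Gamma'_1)\,b_1(\Gamma'_2)$. Under the identification $H_1(\Gamma) = H_1(\Gamma'_1)\oplus H_1(\Gamma'_2)$ the tensor square decomposes as
\[
H_1(\Gamma)\otimes H_1(\Gamma) = \big(H_1(\Gamma'_1)^{\otimes 2}\big) \oplus \big(H_1(\Gamma'_2)^{\otimes 2}\big) \oplus \big(H_1(\Gamma'_1)\otimes H_1(\Gamma'_2)\big) \oplus \big(H_1(\Gamma'_2)\otimes H_1(\Gamma'_1)\big).
\]
On the first two summands $I_\Gamma$ agrees, via the monomorphisms of Proposition \ref{prop11}, with $I_{\Gamma'_1}$ and $I_{\Gamma'_2}$, because the squares $e_ie_j'$ with $e_i,e_j'\in\Gamma'_k$ all lie in $N_{\Gamma'_k}\subset N_\Gamma$, and these subcomplexes of $N_\Gamma$ are disjoint except possibly along the single edge $e$. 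On the cross terms $H_1(\Gamma'_1)\otimes H_1(\Gamma'_2)$, a cycle $z\in H_1(\Gamma'_1)$ and a cycle $z'\in H_1(\Gamma'_2)$ can be represented by chains using only edges of $\Gamma'_1$ and $\Gamma'_2$ respectively; since $\Gamma'_1\cap\Gamma'_2 = \emptyset$ in $\Gamma$, no edge of the first meets an edge of the second, so the set $A$ in the geometric description of $I$ is empty and $I_\Gamma(z\otimes z') = 0$. Hence the cross terms lie entirely in the kernel of $I_\Gamma$, contribute nothing to the image, and the image of $I_\Gamma$ is exactly the (internal) direct sum of the images of $I_{\Gamma'_1}$ and $I_{\Gamma'_2}$ inside $H_2(N_\Gamma,\partial N_\Gamma)$; by Proposition \ref{prop11} these two images are independent, so $\rk(\im I_\Gamma) = \rk(\im I_{\Gamma'_1}) + \rk(\im I_{\Gamma'_2})$. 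Taking cokernels (ranks only) against the decomposition of $H_1(\Gamma)^{\otimes 2}$ above yields the claimed formula for $\rk Q_\Gamma$, the extra $2b_1(\Gamma'_1)b_1(\Gamma'_2)$ coming from the two cross summands on which $I_\Gamma$ vanishes identically.

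Finally I would assemble the pieces: substitute the three computed quantities into the formula for $b_2(F(\Gamma,2))$ and into $b_2(F(\Gamma'_1,2)) + b_2(F(\Gamma'_2,2))$, and check algebraically that the difference is $2b_1(\Gamma'_1)b_1(\Gamma'_2)$. The cross terms in $(b_1(\Gamma'_1)+b_1(\Gamma'_2))^2$ produce $2b_1(\Gamma'_1)b_1(\Gamma'_2)$; the $+1$ appears once on the left and twice on the right, so one $-1$ must be cancelled — it is cancelled by the $-b_1(\Gamma)$ versus $-b_1(\Gamma'_1)-b_1(\Gamma'_2)$ comparison only if one is careful, so in fact the bookkeeping gives $b_2(F(\Gamma,2)) - b_2(F(\Gamma'_1,2)) - b_2(F(\Gamma'_2,2)) = 2b_1(\Gamma'_1)b_1(\Gamma'_2) + (\text{correction from } Q \text{ and } \Sigma) = 2b_1(\Gamma'_1)b_1(\Gamma'_2)$ after the $\rk Q$ contribution $2b_1(\Gamma'_1)b_1(\Gamma'_2)$ is cancelled by the $\Sigma$ increments and the $+1$ discrepancy — this final arithmetic reconciliation is the step most prone to sign and constant errors, so I would carry it out slowly and also sanity-check it on a small example such as $\Gamma'_1 = \Gamma'_2 = K_5$. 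I expect the main genuine obstacle, as opposed to bookkeeping, to be the verification that the images of $I_{\Gamma'_1}$ and $I_{\Gamma'_2}$ are independent subgroups of $H_2(N_\Gamma,\partial N_\Gamma)$ and that $I_\Gamma$ restricted to each $H_1(\Gamma'_k)^{\otimes 2}$ really is $I_{\Gamma'_k}$ — both of which follow cleanly from Proposition \ref{prop11} and the observation that $N_{\Gamma'_1}$ and $N_{\Gamma'_2}$ are essentially disjoint in $N_\Gamma$.
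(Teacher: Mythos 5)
Your proposal does not address part (a) of the theorem at all; it only attempts part (b). That aside, the argument for (b) contains a genuine error at its central step. You claim that $\rk Q_\Gamma = \rk Q_{\Gamma'_1} + \rk Q_{\Gamma'_2} + 2b_1(\Gamma'_1)b_1(\Gamma'_2)$, with the extra $2b_1(\Gamma'_1)b_1(\Gamma'_2)$ ``coming from the two cross summands on which $I_\Gamma$ vanishes identically.'' This confuses kernel with cokernel. The cross summands $H_1(\Gamma'_1)\otimes H_1(\Gamma'_2)$ and $H_1(\Gamma'_2)\otimes H_1(\Gamma'_1)$ live in the \emph{domain} of $I_\Gamma$; since $I_\Gamma$ kills them, they contribute $2b_1(\Gamma'_1)b_1(\Gamma'_2)$ to $\rk\ker I_\Gamma = b_2(F(\Gamma,2))$, not to $Q_\Gamma = \coker I_\Gamma$, which lives in the \emph{target} $H_2(N_\Gamma,\partial N_\Gamma)$. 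The rank of $Q_\Gamma$ changes by $\rk H_2(N_\Gamma,\partial N_\Gamma) - \rk H_2(N_{\Gamma'_1},\partial N_{\Gamma'_1}) - \rk H_2(N_{\Gamma'_2},\partial N_{\Gamma'_2})$ (since, as you correctly argue, $\rk\im I_\Gamma = \rk\im I_{\Gamma'_1}+\rk\im I_{\Gamma'_2}$), and by Corollary 2.5 of \cite{BF} this difference is $2(\mu(v_1)-1)+2(\mu(v_2)-1)+1$, not $2b_1(\Gamma'_1)b_1(\Gamma'_2)$; this is exactly the relation the paper records later in the proof of Proposition \ref{prop: non-mature}. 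Your own closing sanity check already signals the problem: you need the $\rk Q$ increment to be cancelled by the $\Sigma$ increments plus the $+1$ discrepancy, i.e.\ you would need $2b_1(\Gamma'_1)b_1(\Gamma'_2) = 2(\mu(v_1)-1)+2(\mu(v_2)-1)+1$, which is impossible on parity grounds alone. With the correct increment for $\rk Q_\Gamma$ the arithmetic does close, but then the $2b_1(\Gamma'_1)b_1(\Gamma'_2)$ in the final answer comes from the cross term of $b_1(\Gamma)^2=(b_1(\Gamma'_1)+b_1(\Gamma'_2))^2$, not from $Q_\Gamma$.

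The paper's route avoids this bookkeeping entirely: by Proposition \ref{prop1}, $H_2(F(\Gamma,2))\cong\ker I_\Gamma$, and one decomposes $H_1(\Gamma)\otimes H_1(\Gamma)$ into the four summands exactly as you do. The restriction of $I_\Gamma$ to $H_1(\Gamma'_k)^{\otimes 2}$ is $I_{\Gamma'_k}$ (Proposition \ref{prop11} and Corollary \ref{cor22}), the cross summands lie in the kernel, and the images of $I_{\Gamma'_1}$ and $I_{\Gamma'_2}$ are independent in $H_2(N_\Gamma,\partial N_\Gamma)$ because they are supported on disjoint sets of $2$-cells $ee'$; hence $\ker I_\Gamma \cong \ker I_{\Gamma'_1}\oplus\ker I_{\Gamma'_2}\oplus\bigl(H_1(\Gamma'_1)\otimes H_1(\Gamma'_2)\bigr)\oplus\bigl(H_1(\Gamma'_2)\otimes H_1(\Gamma'_1)\bigr)$ and (\ref{btwoagain}) follows immediately. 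The independence and restriction facts you flag as the ``main genuine obstacle'' are sound and are exactly what the paper uses; the repair for your write-up is to apply them to the kernel rather than routing the cross terms through $Q_\Gamma$, and to supply an argument for part (a).
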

\begin{proof} The first part of statement (a) follows from Proposition \ref{prop1}, Corollary \ref{cor22} and from the following commutative diagram
$$
\begin{array}{ccc}
H_1(\Gamma')\otimes H_1(\Gamma') & \stackrel {I_{\Gamma'}}\to & H_2(N_{\Gamma'}, \partial N_{\Gamma'})\\
\downarrow \cong && \downarrow\\
H_1(\Gamma)\otimes H_1(\Gamma) & \stackrel {I_{\Gamma}}\to & H_2(N_{\Gamma}, \partial N_{\Gamma})
\end{array}
$$
where the vertical map on the left is an isomorphism induced by the inclusion $\Gamma'\to \Gamma$ and the vertical map on the right is injective according to Proposition \ref{prop11}. The second part of statement (a) is a consequence of Corollary 1.2 from \cite{BF}.

To prove statement (b) one notes that $H_1(\Gamma')\cong H_1(\Gamma'_1) \oplus H_1(\Gamma'_2)$ and therefore the tensor product $H_1(\Gamma')\otimes H_1(\Gamma')$ is the direct sum of the four groups $H_1(\Gamma'_1)\otimes H_1(\Gamma'_1)$, $H_1(\Gamma'_2)\otimes H_1(\Gamma'_2)$, $H_1(\Gamma'_1)\otimes H_1(\Gamma'_2)$ and $H_1(\Gamma'_2)\otimes H_1(\Gamma'_1)$. The intersection form $I_{\Gamma'}$ restricts as $I_{\Gamma'_1}$ and $I_{\Gamma'_2}$ on the first and the second summands correspondingly.
On the other hand the intersection form $I_{\Gamma'}$ vanishes on the two remaining summands. Now, taking into account Proposition \ref{prop1} we obtain formula (\ref{btwo}).
\end{proof}

If we are only interested in computing the second Betti number of $F(\Gamma, 2)$ then, by statement (a) of Theorem \ref{enlarge1}
 applied inductively, we may always simplify our graph by removing all \lq\lq freely attached trees\rq\rq.

\section{The linking homomorphism}

Let $\Gamma$ be a connected finite graph. Let $u, v\in V(\Gamma)$ be two vertices of $\Gamma$ which are not connected by an edge.
\begin{figure}[h]
\begin{center}
\resizebox{10cm}{5cm}{\includegraphics[41,423][467,663]{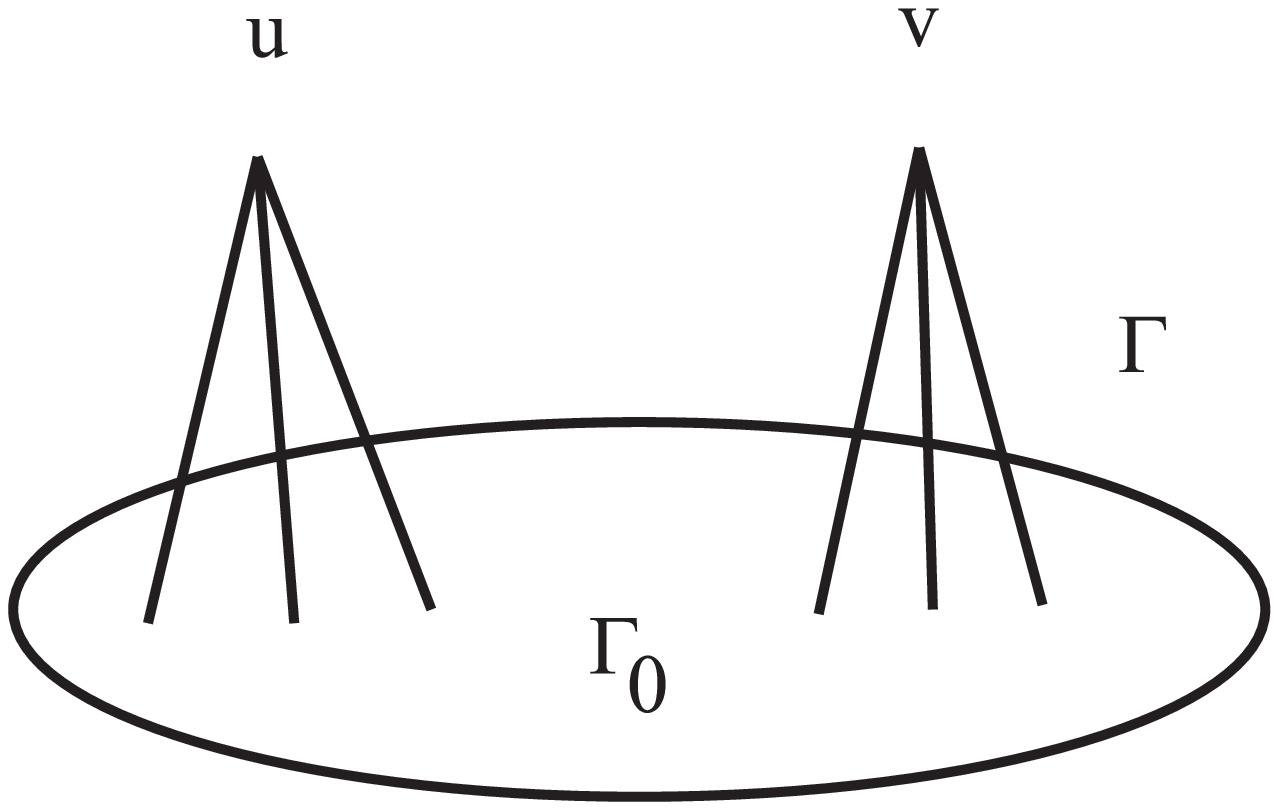}}
\end{center}
\caption{Linking.}\label{gamma}
\end{figure} We denote by $\Gamma_0$ the graph obtained from $\Gamma$ by removing $u$ and $v$ and all edges incident to $u, v$. Note that $\Gamma_0$ is connected if and only if $\Gamma-\{u, v\}$ is connected. Our goal is to examine the changes in the homology of $F(\Gamma, 2)$ when one attaches an edge connecting $u$ and $v$.

We define {\it a linking homomorphism}
\begin{eqnarray}\label{link}
\Lk_{v,u}: H_1(\Gamma_0) \to Q_\Gamma
\end{eqnarray}
where $Q_\Gamma$ is the cokernel of the intersection form (\ref{intersection}).
Consider a homology class $z\in H_1(\Gamma_0)$ and the corresponding cycle $c\in C_1(\Gamma_0)$. Since $\Gamma$ is connected we may find a chain
\begin{eqnarray}\label{aa}
a\in C_1(\Gamma) \, \, \mbox{with $\partial a=u-v$}.
\end{eqnarray}
Then $ac\in C_2(\Gamma\times \Gamma)$ is a chain satisfying
$$\partial (ac)=uc-vc.$$
Note that the boundary cycles $uc$ and $vc$ lie in the subcomplex $$C_\ast(D(\Gamma, 2))\subset C_\ast(\Gamma\times \Gamma)$$ and therefore $ac$ determines a cycle of the relative chain complex $$C_\ast(\Gamma\times \Gamma, D(\Gamma, 2))=C_\ast(N_\Gamma,\partial N_\Gamma).$$ We consider the homology class of $ac$ as an element of $H_2(N_\Gamma, \partial N_\Gamma)$ and denote by $\Lk_{v, u}(z)$ its image in $Q_\Gamma$.

\begin{proposition} The linking homomorphism $\Lk_{v,u}$ is well-defined, i.e. for $z\in H_1(\Gamma_0)$ the result $\Lk_{v,u}(z)\in Q_\Gamma$ does not depend on the choice of the chain $a$, see (\ref{aa}).
\end{proposition}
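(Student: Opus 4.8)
The plan is to fix one admissible chain $a$, write an arbitrary second choice as $a+b$ with $b$ a cycle in $\Gamma$, and recognise the resulting change in the candidate value of $\Lk_{v,u}(z)$ as a value of the intersection form $I_\Gamma$, which therefore vanishes in $Q_\Gamma=\coker I_\Gamma$.

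First I would record the bookkeeping that makes $ac$ a genuine relative cycle. For a cycle $c\in C_1(\Gamma_0)$ representing $z$ and $a\in C_1(\Gamma)$ with $\partial a=u-v$ one has $\partial(ac)=(\partial a)c-a(\partial c)=uc-vc$, since $\partial c=0$. Each closed edge $\bar e$ occurring in $c$ is an edge of $\Gamma_0$, hence an edge of $\Gamma$ with neither endpoint equal to $u$ or to $v$, so the closed cells $\{u\}\times\bar e$ and $\{v\}\times\bar e$ appearing in $uc$ and $vc$ lie in $D(\Gamma,2)$; thus $uc,vc\in C_\ast(D(\Gamma,2))$ and $ac$ determines a class in $H_2(N_\Gamma,\partial N_\Gamma)=H_2(\Gamma\times\Gamma,D(\Gamma,2))$. (There is no choice of representing cycle $c$ to worry about: $\Gamma_0$ is one-dimensional, so $H_1(\Gamma_0)=Z_1(\Gamma_0)$ and $c$ is determined by $z$.)

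Now let $a'$ be a second chain with $\partial a'=u-v$ and put $b:=a-a'$, so $\partial b=0$ and $b$ is a $1$-cycle in $\Gamma$. Then $b\times c$ is an \emph{absolute} $2$-cycle of $\Gamma\times\Gamma$ — both $b$ and $c$ are closed — with K\"unneth class $[b]\otimes i_\ast(z)$ in $H_2(\Gamma\times\Gamma)\cong H_1(\Gamma)\otimes H_1(\Gamma)$, where $i\colon\Gamma_0\hookrightarrow\Gamma$. At the chain level $ac-a'c=bc$, and by the very definition of the intersection form — $I_\Gamma$ is the homomorphism $j_\ast\colon H_2(\Gamma\times\Gamma)\to H_2(\Gamma\times\Gamma,D(\Gamma,2))$ read through the K\"unneth and excision identifications — the relative class of $bc$ equals $I_\Gamma([b]\otimes i_\ast(z))$, an element of $\im I_\Gamma$. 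Hence the relative classes of $ac$ and $a'c$ in $H_2(N_\Gamma,\partial N_\Gamma)$ differ by an element of $\im I_\Gamma$ and so have the same image in $Q_\Gamma$, which is the asserted independence. The step requiring the most care is this last identification: one must use both that $b$ and $c$ are cycles (so $bc$ is a genuine absolute cycle whose relative class lies in the image of $j_\ast$) and the precise construction of $I_\Gamma$; everything else is routine, and the same computation also shows that $\Lk_{v,u}$ is additive.
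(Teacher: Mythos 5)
Your proof is correct and follows essentially the same route as the paper's: both write the difference of the two choices as $(a-a')c$ with $a-a'$ an absolute $1$-cycle, identify the resulting relative class with $I_\Gamma([a-a']\otimes i_\ast(z))$ via the K\"unneth/excision definition of the intersection form, and conclude that the two candidate values agree in $Q_\Gamma=\coker I_\Gamma$. The extra bookkeeping you include (that $uc,vc$ lie in $C_\ast(D(\Gamma,2))$ and that $c$ is determined by $z$ since $\Gamma_0$ is one-dimensional) is accurate and only makes the argument more complete.
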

\begin{proof} If $a'\in C_1(\Gamma)$ is another chain satisfying $\partial a'=u-v$ then $$ac-a'c=(a-a')c\in C_2(\Gamma\times \Gamma)$$ is a cycle and its image under
$$H_2(\Gamma\times \Gamma) \to H_2(\Gamma\times \Gamma, D(\Gamma, 2))\cong H_2(N_\Gamma, \partial N_\Gamma)$$
lies in the image of the intersection form (\ref{intersection}) as it coincides with the intersection of cycles $a-a'$ and $c$. This shows that the difference  between the images of $ac$ and $a'c$
under $$C_2(\Gamma\times \Gamma) \to C_2(\Gamma\times\Gamma, D(\Gamma, 2))\cong C_2(N_\Gamma, \partial N_\Gamma)$$
lies in the image of $I_\Gamma$. Hence the coset of $ac$ in $Q_\Gamma$ is well-defined.
\end{proof}

To compute the linking form explicitly one may represent elements of $H_2(N_\Gamma, \partial N_\Gamma)$ as integer linear combinations of symbols
$ee'$ corresponding to ordered pairs of edges of $\Gamma$ with $e\cap e'\not=\emptyset$, see \cite{BF}, \S 3.

Consider the graph shown on Figure \ref{example1}.
\begin{figure}[h]
\begin{center}
\resizebox{7cm}{5cm}{\includegraphics[97,407][488,694]{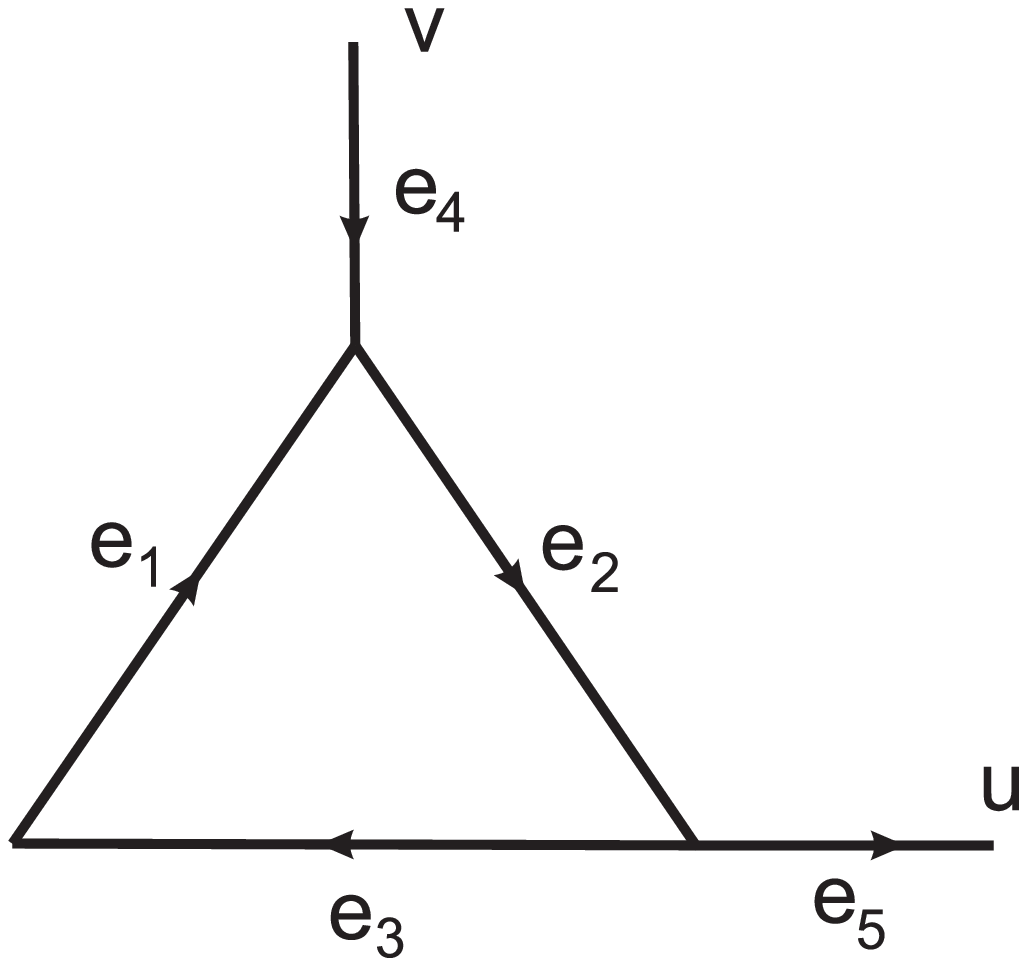}}
\end{center}
\caption{Linking.}\label{example1}
\end{figure}
The group $H_2(N_\Gamma, \partial N_\Gamma)$ is a subgroup of $C_2(N_\Gamma, \partial N_\Gamma)$ and the latter group is free abelian with basis $e_ie_j$ where
$i, j=1, \dots, 5$ with omission of the symbols $e_1e_5$, $e_5e_1$, $e_4e_3$, $e_3e_4$, $e_4e_5$, $e_5e_4$ corresponding to disjoint pairs of edges. If $z\in H_1(\Gamma_0)$ is the homology class of the cycle $c= e_1+e_2+e_3$ then $\Lk_{v, u}(z)$ is represented by the element $(e_4+e_2+e_5)(e_1+e_2+e_3)= e_4e_1+e_4e_2+e_2e_1+e_2e_2+e_2e_3+e_5e_2+e_5e_3\in H_2(N_\Gamma, \partial N_\Gamma)$ and one needs to take its coset in $Q_\Gamma$. In this example we have only one generator $z\in H_1(\Gamma)$ and
the intersection $I(z\otimes z)$ equals $(e_1+e_2+e_3)(e_1+e_2+e_3)$. We see that in this case the linking $\Lk_{v, u}(z)\in Q_\Gamma$ is nontrivial.

Next we mention several simple properties of the linking homomorphism.
Reversing the order of the vertices $(v,u)\mapsto (u, v)$ results in changing the sign of the linking homomorphism,
\begin{eqnarray}
\Lk_{v,u}(z) = - \Lk_{u,v}(z),
\end{eqnarray}
$z\in H_1(\Gamma_0)$. Besides,
\begin{eqnarray}
\Lk_{v,u}(z+z') = \Lk_{v, u}(z) + \Lk_{v, u}(z')
\end{eqnarray}
where $z, z'\in H_1(\Gamma_0)$.

\begin{lemma}\label{lmlink} Suppose that a homology class $z\in H_1(\Gamma_0)$ can be realized by a cycle $c=\sum n_ie_i \in C_1(\Gamma_0)$, $n_i\in \Z$, $n_i\not=0$ and there is
a chain $a=\sum m_je'_j \in C_1(\Gamma)$, $m_j\in \Z$, $m_j\not=0$ satisfying $\partial a=u-v$
and $e_i\cap e'_j=\emptyset$ for all $i, j$ (in other words, the chain $a$ connecting $v$ and $u$ and the cycle $c$ are disjoint). Then $\Lk_{v,u}(z)=0.$
\end{lemma}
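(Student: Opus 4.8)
The plan is to compute $\Lk_{v,u}(z)$ directly from its definition, taking advantage of the fact (established in the preceding proposition) that the value does not depend on the choice of chain $a$ with $\partial a = u - v$. So I would use precisely the chain $a = \sum_j m_j e'_j$ and the cycle $c = \sum_i n_i e_i$ provided by the hypothesis, and form the product chain
\[
ac = \sum_{i,j} m_j n_i\,(e'_j \times e_i)\ \in\ C_2(\Gamma\times\Gamma).
\]

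The key observation is that the disjointness hypothesis $e_i\cap e'_j=\emptyset$ forces every cell $e'_j\times e_i$ occurring in $ac$ to lie inside the discrete configuration space $D(\Gamma,2)$. Indeed, for $(x,y)\in e'_j\times e_i$ one has $\supp\{x\}\subseteq e'_j$ and $\supp\{y\}\subseteq e_i$, hence $\supp\{x\}\cap\supp\{y\}\subseteq e'_j\cap e_i=\emptyset$, which is exactly the defining condition for $(x,y)$ to belong to $D(\Gamma,2)$. Consequently $ac$ is already a chain in the subcomplex $C_\ast(D(\Gamma,2))\subset C_\ast(\Gamma\times\Gamma)$, so its image in the quotient complex $C_\ast(\Gamma\times\Gamma,D(\Gamma,2))=C_\ast(N_\Gamma,\partial N_\Gamma)$ is the zero chain. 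Therefore the relative homology class of $ac$ in $H_2(N_\Gamma,\partial N_\Gamma)$ vanishes, and a fortiori so does its image $\Lk_{v,u}(z)$ in $Q_\Gamma$, which is what we wanted.

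There is essentially no serious obstacle here; the argument is one line once the right chain $a$ is chosen, and the only point requiring a little care is the bookkeeping about supports — namely checking that "$e_i$ and $e'_j$ disjoint" (meaning the closed edges share no vertex) really does imply that the whole closed square $e'_j\times e_i$, not merely its interior, sits inside $D(\Gamma,2)$. This follows immediately from the combinatorial description of $D(\Gamma,2)$ recalled in Section 2, so the argument goes through without complication.
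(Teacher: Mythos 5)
Your argument is correct and is exactly what the paper means when it says the lemma ``follows directly from the definition'': choosing the given disjoint chain $a$ (legitimate by the well-definedness proposition), the product $ac$ lies entirely in $D(\Gamma,2)$ and hence vanishes in the relative complex $C_\ast(\Gamma\times\Gamma, D(\Gamma,2))=C_\ast(N_\Gamma,\partial N_\Gamma)$, so $\Lk_{v,u}(z)=0$. No difference in approach; you have simply written out the one-line verification the authors left implicit.
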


This follows directly from the definition of $\Lk_{v,u}(z)$.

Lemma \ref{lmlink} shows that $\Lk_{v,u}(z)$ measures {\it\lq\lq linking phenomenon\rq\rq}\, (similar to the classical linking of a pair of disjoint closed curves in $\R^3$) between the cycle $z$ and the zero-dimensional sphere $S^0\subset \Gamma$ represented by the pair of vertices $v, u$.

\begin{corollary}
Suppose that the graph $\Gamma_0$ is disconnected and there are at least two connected components of $\Gamma_0$ which are connected by edges with both vertices $u$ and $v$.
Then $\Lk_{v, u}(z)=0$ for any $z\in H_1(\Gamma_0)$.
\end{corollary}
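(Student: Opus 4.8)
The plan is to combine additivity of $\Lk_{v,u}$ with Lemma~\ref{lmlink}. Pick two distinct connected components $C_1, C_2$ of $\Gamma_0$, each joined in $\Gamma$ by an edge to $u$ and by an edge to $v$; write $e_i^u$ for an edge of $\Gamma$ from $u$ to a vertex of $C_i$ and $e_i^v$ for an edge from $v$ to a vertex of $C_i$ (for $i=1,2$). Given $z\in H_1(\Gamma_0)$, realize it by a cycle $c\in C_1(\Gamma_0)$. Since the edges of $\Gamma_0$ are partitioned among its connected components and edges lying in different components share no vertices, $c$ splits as $c=c'+c''$, where $c'$ is the sub-chain of $c$ formed by the edges lying in $C_1$ and $c''$ the sub-chain formed by the edges lying in the other components; both $c'$ and $c''$ are cycles and $z=[c']+[c'']$. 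It therefore suffices to show $\Lk_{v,u}([c'])=0$ and $\Lk_{v,u}([c''])=0$ separately.

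For $[c'']$: as $C_1$ is connected, concatenating the edge $e_1^v$, a path in $C_1$ joining the $C_1$-side endpoints of $e_1^v$ and $e_1^u$, and the edge $e_1^u$ (with suitable orientations) produces a chain $a\in C_1(\Gamma)$ with $\partial a=u-v$ whose support lies in $\{u,v\}\cup C_1\cup\{e_1^u,e_1^v\}$. This support is disjoint from the support of $c''$: the latter avoids $C_1$, while $e_1^u$ and $e_1^v$ are incident to $u$ or $v$ and hence are not edges of $\Gamma_0$, and their $\Gamma_0$-side endpoints lie in $C_1$. Thus Lemma~\ref{lmlink} applies and $\Lk_{v,u}([c''])=0$. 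Symmetrically, $c'$ is supported in $C_1$, which is disjoint from $C_2$, so routing the connecting chain from $v$ to $u$ through $C_2$ via $e_2^v$ and $e_2^u$ gives a chain with boundary $u-v$ disjoint from $c'$, whence Lemma~\ref{lmlink} gives $\Lk_{v,u}([c'])=0$. Adding the two contributions yields $\Lk_{v,u}(z)=0$.

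I expect no real obstacle here beyond the bookkeeping needed to verify disjointness of the interpolating chains; the genuine inputs are the connectedness of the components $C_i$ (so that the interpolating path inside $C_i$ exists) and the hypothesis that the two components are distinct (so that whichever summand of $c$ one is treating, it misses at least one of $C_1,C_2$, leaving that component free to route the chain $a$ through).
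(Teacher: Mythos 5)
Your argument is correct and is essentially the paper's own proof: both decompose $z$ according to the connected components of $\Gamma_0$, use the hypothesis to route a chain from $v$ to $u$ through a component that the given piece of the cycle avoids, and then invoke Lemma~\ref{lmlink} together with additivity of $\Lk_{v,u}$. The only cosmetic difference is that you split $H_1(\Gamma_0)$ into two pieces ($C_1$ versus the rest) while the paper splits it over all components at once.
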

The Corollary is illustrated by Figure \ref{twoways}.
\begin{figure}[h]
\begin{center}
\resizebox{10cm}{5cm}{\includegraphics[21,407][534, 673]{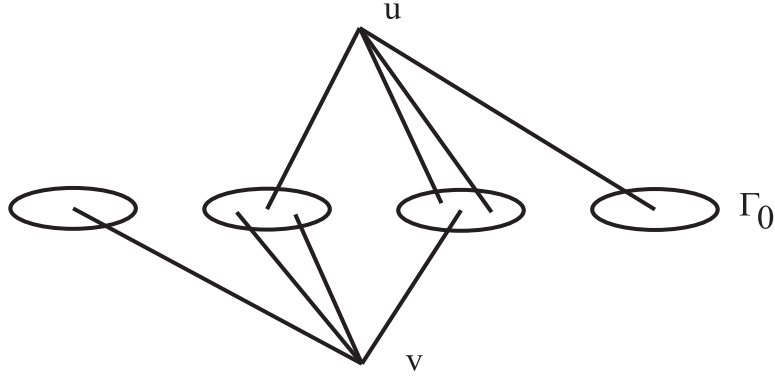}}
\end{center}
\caption{The case when $\Gamma_0$ is disconnected.}\label{twoways}
\end{figure}
\begin{proof} Let $\Gamma_0 = \sqcup_{i=1}^k \Gamma^i_{0}$ be the connected components of $\Gamma_0$. Then $H_1(\Gamma_0)= \oplus_{i=1}^k H_1(\Gamma^i_{0})$. It is enough to show that $\Lk_{v, u}(z)=0$ for any $z\in H_1(\Gamma^i_{0})$. Using our assumptions we see that given $i=1, \dots, k$ we may connect $u$ and $v$ by a path in $\Gamma$ which avoids
$\Gamma_0^i$. The result now follows from  Lemma \ref{lmlink}.
\end{proof}

Next we consider the embeddings
\begin{eqnarray}l_v, l_u: \Gamma_0\to D(\Gamma, 2),\end{eqnarray}
where for $x\in \Gamma_0$ one has
$$l_v(x)=(v, x), \quad l_u(x) =(u, x).$$
Here the symbol $l$ stands for \lq\lq left\rq\rq. The similar \lq\lq right\rq\rq\,  embeddings $r_u, r_v: \Gamma_0\to D(\Gamma, 2)$ are given by $$r_u(x)=(x, u), \quad r_v(x)=(x,v).$$

The following result gives a relation between the induced homomorphisms $(l_u)_\ast$, $(l_v)_\ast$ on homology and the linking homomorphism.

\begin{lemma}\label{parlink} For a homology class $z\in H_1(\Gamma_0)$ one has
\begin{eqnarray}\label{compos}
(l_u)_\ast(z) - (l_v)_\ast(z) = \partial(\Lk_{v, u}(z)) \, \, \in \, \, H_1(D(\Gamma, 2)),
\end{eqnarray}
where $\partial$ denotes the composition
\begin{eqnarray}\label{partial}
H_2(N_\Gamma, \partial N_\Gamma) \to H_1(\partial N_\Gamma) \to H_1(D(\Gamma, 2))
\end{eqnarray}
of the boundary homomorphism and the homomorphism induced by the inclusion $\partial N_\Gamma \to D(\Gamma, 2)$.
Similarly, one has
\begin{eqnarray}\label{compostau}
(r_u)_\ast(z) - (r_v)_\ast(z) = \tau_\ast(\partial(\Lk_{v, u}(z))) \, \, \in \, \, H_1(D(\Gamma, 2)),
\end{eqnarray}
where $$\tau: D(\Gamma, 2) \to D(\Gamma, 2)$$ is the involution acting by $\tau(x, y)=(y,x)$.
\end{lemma}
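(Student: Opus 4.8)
The plan is to recognise both sides of (\ref{compos}) as the value of the connecting homomorphism of the pair $(\Gamma\times\Gamma,D(\Gamma,2))$ on the relative class of the chain $ac$, and then to identify that connecting homomorphism with the map (\ref{partial}) by excision. First I would record what the chains involved represent. Writing $z\in H_1(\Gamma_0)$ as a cycle $c=\sum n_ie_i\in C_1(\Gamma_0)$, the embedding $l_u$ (resp.\ $l_v$) carries each edge $e_i$ homeomorphically onto the $1$-cell $u\times e_i$ (resp.\ $v\times e_i$) of $D(\Gamma,2)$, so $l_u$ takes the cycle $c$ to $uc$ and $l_v$ takes $c$ to $vc$; hence $[uc]=(l_u)_\ast(z)$ and $[vc]=(l_v)_\ast(z)$ in $H_1(D(\Gamma,2))$. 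Since $\partial(ac)=uc-vc\in C_1(D(\Gamma,2))$ by (\ref{aa}), the chain $ac\in C_2(\Gamma\times\Gamma)$ is a relative $2$-cycle, and the connecting homomorphism $\delta: H_2(\Gamma\times\Gamma,D(\Gamma,2))\to H_1(D(\Gamma,2))$ of this pair sends its class to $[\partial(ac)]=[uc]-[vc]=(l_u)_\ast(z)-(l_v)_\ast(z)$.

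Next I would match $\delta$ with the composite (\ref{partial}). Because $\partial N_\Gamma\subset D(\Gamma,2)$ (immediate from the cell description of $\partial N_\Gamma$ in Section 3), the inclusion $(N_\Gamma,\partial N_\Gamma)\hookrightarrow(\Gamma\times\Gamma,D(\Gamma,2))$ is a map of pairs, and naturality of the long exact homology sequence produces a commutative square: the top row is the connecting map $H_2(N_\Gamma,\partial N_\Gamma)\to H_1(\partial N_\Gamma)$, the bottom row is $\delta$, the left column is the excision isomorphism $H_2(N_\Gamma,\partial N_\Gamma)\cong H_2(\Gamma\times\Gamma,D(\Gamma,2))$ recalled in Section 2, and the right column is the map $H_1(\partial N_\Gamma)\to H_1(D(\Gamma,2))$ induced by inclusion. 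Hence (\ref{partial}) applied to any $\mu\in H_2(N_\Gamma,\partial N_\Gamma)$ equals $\delta$ applied to the image of $\mu$ under excision. Taking $\mu$ to be the excision preimage of $[ac]$ --- which is precisely the class of $ac$ in $H_2(N_\Gamma,\partial N_\Gamma)$ used in the definition of the linking homomorphism, and whose image in $Q_\Gamma$ is $\Lk_{v,u}(z)$ --- we conclude that (\ref{partial}) sends $\mu$ to $(l_u)_\ast(z)-(l_v)_\ast(z)$. Finally, $\delta$ annihilates the image of $j_\ast: H_2(\Gamma\times\Gamma)\to H_2(\Gamma\times\Gamma,D(\Gamma,2))$ by exactness of the long exact sequence of the pair, and under the identifications of Section 2 this $j_\ast$ is exactly the intersection form $I_\Gamma$; therefore (\ref{partial}) annihilates $\im I_\Gamma$ and factors through $Q_\Gamma=\coker I_\Gamma$, so the identity $(l_u)_\ast(z)-(l_v)_\ast(z)=\partial(\Lk_{v,u}(z))$ of (\ref{compos}) makes sense and holds.

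Formula (\ref{compostau}) then follows formally: $r_u=\tau\circ l_u$ and $r_v=\tau\circ l_v$ as maps $\Gamma_0\to D(\Gamma,2)$, because $\tau(l_u(x))=\tau(u,x)=(x,u)=r_u(x)$ and likewise for $v$; hence $(r_u)_\ast=\tau_\ast\circ(l_u)_\ast$ and $(r_v)_\ast=\tau_\ast\circ(l_v)_\ast$, and applying $\tau_\ast$ to (\ref{compos}) gives $(r_u)_\ast(z)-(r_v)_\ast(z)=\tau_\ast(\partial(\Lk_{v,u}(z)))$.

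The routine checks --- that $l_u,l_v,r_u,r_v$ take values in $D(\Gamma,2)$, that $uc$ and $vc$ are cycles there, and that $\partial N_\Gamma\subset D(\Gamma,2)$ --- are all immediate from the cell-structure descriptions of Section 3. The one step I expect to require genuine care, and which I would spell out, is keeping the identifications straight: $ac$ lives a priori in $C_2(\Gamma\times\Gamma)$ rather than in $C_2(N_\Gamma)$, so one must pass through the excision isomorphism and verify that the boundary map obtained this way is literally the $\partial$ of (\ref{partial}) (equivalently, of (\ref{secex})). Granting that bookkeeping, the lemma is a purely formal consequence of the naturality of connecting homomorphisms.
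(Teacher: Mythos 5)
Your proposal is correct and follows essentially the same route as the paper: the paper also observes that $ac$ is a relative cycle representing $\Lk_{v,u}(z)$, that $\partial(ac)=uc-vc$ represents $(l_u)_\ast(z)-(l_v)_\ast(z)$, and deduces (\ref{compostau}) from $r_u=\tau\circ l_u$, $r_v=\tau\circ l_v$. You merely spell out the excision/naturality bookkeeping that the paper dismisses as "follows directly from our definitions."
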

Note that the composition $\partial \circ \Lk_{v, u}$ (which appears in (\ref{compos})) is well-defined as follows from the exact sequence
$$\dots \to H_1(\Gamma)\otimes H_1(\Gamma) \stackrel{I_\Gamma}\to H_2(N_\Gamma, \partial N_\Gamma) \stackrel \partial \to H_1(D(\Gamma, 2))\to \dots,$$ see \cite{BF}, formula (12). This exact sequence also implies that
$\partial: Q_\Gamma  \to H_1(D(\Gamma, 2))$
is injective.

\begin{proof}[Proof of Lemma \ref{parlink}] The statement follows directly from our definitions. Indeed, given $z\in H_1(\Gamma_0)$ and a cycle
$c\in C_1(\Gamma_0)$ representing it, consider a chain $a\in C_1(\Gamma)$ with $\partial a= u-v$. Then the chain $ac\in C_2(\Gamma\times \Gamma, D(\Gamma, 2))$ is a cycle representing
$\Lk_{v, u}(z)$. The class $\partial \Lk_{v, u}(z)\in H_1(D(\Gamma, 2))$ is represented by the cycle $\partial(ac) = uc-vc$ which equals $(l_u)_\ast(z) - (l_v)_\ast(z)$.

The second statement follows from the first since $r_u=\tau\circ l_u$ and $r_v=\tau\circ l_v$.
\end{proof}

% \begin{example} Consider the graph $\Gamma$ shown on Figure \ref{figlink} on the left. Note that $\Gamma$ is planar and satisfies the conditions of Theorem 7.3 from \cite{BF}.
% Hence $\coker I_\Gamma$ is nontrivial and has rank one, see the footnote on page 123 of \cite{BF}.
% The graph $\Gamma_0$ shown on the right is obtained from $\Gamma$ by removing vertices $u, v$ and all edges incident to them.
% \begin{figure}[h]
% \begin{center}
% \resizebox{12cm}{5cm}{\includegraphics[14,526][591,792]{figlink.eps}}
% \end{center}
% \caption{Example with nontrivial linking.}\label{figlink}
% \end{figure}
% It is clear that the linking homomorphism vanishes on the cycle $ABCD$ and is nontrivial on the cycle $CDE$. The dotted line shows a path connecting $u$ and $v$ in $\Gamma$.
% \end{example}

\section{Enlarging graphs, II}

In this section we use the linking homomorphism introduced in the previous section to describe the homology of the configuration space of two particles on a graph in the situation when a new edge is added to the graph.

Let $\Gamma$ be a finite connected graph not homeomorphic to $[0,1]$ and let $u, v \in V(\Gamma)$ be two vertices which are not connected by an edge in $\Gamma$. The new graph $\hat \Gamma$ is obtained from $\Gamma$ by adding an edge $e$ connecting $u$ and $v$, see Figure \ref{gamma1}. The inclusion $\Gamma\to \hat \Gamma$ gives an embedding of the configuration spaces
$F(\Gamma, 2) \to F(\hat \Gamma, 2)$. Denote by $\Gamma_0$ the graph obtained from $\Gamma$ by removing $u$, $v$ and all edges emanating from $u$ and $v$.

\begin{figure}[h]
\begin{center}
\resizebox{8cm}{4cm}{\includegraphics[33,419][461, 671]{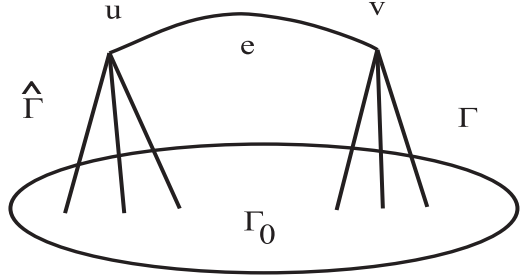}}
\end{center}
\caption{Enlarging graphs, II.}\label{gamma1}
\end{figure}

\begin{theorem}\label{thm10} In the notations introduced above, consider the inclusion $i: F(\Gamma, 2) \to F(\hat \Gamma, 2)$ and the involution $\tau(x, y)=(y,x)$ acting on the spaces
$F(\Gamma, 2)$ and $F(\hat \Gamma, 2)$. The homology groups $H_\ast(F(\Gamma, 2))$ and $H_\ast(F(\hat \Gamma, 2))$ are $\Z[\Z_2]$-modules via the induced action $\tau_\ast$.

(a) There exists
a short exact sequence
of $\Z[\Z_2]$-modules
\begin{eqnarray}
0\to H_2(F(\Gamma, 2)) \stackrel{i_\ast}\to H_2(F(\hat \Gamma, 2))\to X\to 0
\end{eqnarray}
where $X$ is the subgroup of $H_1(\Gamma_0)\oplus H_1(\Gamma_0)$ consisting of elements $z\oplus z'$ such that $$\Lk_{v, u}(z) +\tau_\ast(\Lk_{v, u}(z'))=0\in Q_\Gamma.$$ The involution $\tau$ acts on $X$ by $\tau(z\oplus z')= z'\oplus z$.

(b) The relations between the one-dimensional homology of $F(\Gamma, 2)$ and
$F(\hat\Gamma, 2)$ can be described by an exact sequence
$$H_1(\Gamma_0)\oplus H_1(\Gamma_0) \stackrel F\to H_1(F(\Gamma, 2)) \stackrel{i_\ast}\to H_1(F(\hat \Gamma, 2)) \stackrel k\to  H_0(\Gamma_0)\oplus  H_0(\Gamma_0)\to 0$$
where $F$ acts by the formula
\begin{eqnarray}\label{fff}
F(z\oplus z')= \partial_\ast \Lk_{v, u}(z)+\tau_\ast \partial_\ast \Lk_{v, u}(z'), \quad z, z'\in H_1(\Gamma_0).\end{eqnarray}
\end{theorem}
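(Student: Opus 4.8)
The plan is to derive both exact sequences from the Mayer--Vietoris sequence associated to a natural decomposition of $D(\hat\Gamma,2)$, which is the right framework because $F$ and $D$ are homotopy equivalent and the discrete model has an explicit cellular structure. Write $\hat\Gamma = \Gamma\cup e$ where $e$ is the new edge joining $u$ and $v$. Then $D(\hat\Gamma,2)$ splits as the union of $D(\Gamma,2)$ together with the cells involving the new edge $e$. More precisely, $D(\hat\Gamma,2)\setminus D(\Gamma,2)$ consists of the open cells $e\times f$ and $f\times e$ where $f$ ranges over closed cells of $\Gamma$ disjoint from $e$, i.e. over cells of $\Gamma_0$ (together with the two vertices of $\Gamma_0$-neighborhoods), plus the products $e\times e'$ which are empty since $e$ meets itself. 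Up to homotopy one gets a decomposition $D(\hat\Gamma,2) = A\cup B$ with $A\simeq D(\Gamma,2)$, with $B$ a regular neighborhood of $(e\times\Gamma_0)\cup(\Gamma_0\times e)$ which deformation retracts onto $\Gamma_0\sqcup\Gamma_0$ (via $e\times\Gamma_0\mapsto\{\text{pt}\}\times\Gamma_0$ and symmetrically), and with $A\cap B$ homotopy equivalent to $(\{u,v\}\times\Gamma_0)\sqcup(\Gamma_0\times\{u,v\})$, hence to four copies of $\Gamma_0$. This is the same kind of decomposition used implicitly in \cite{BF} for the elementary enlargements, so the diagram chasing should be routine once the pieces are identified.

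First I would set up the Mayer--Vietoris sequence
$$\cdots\to H_2(A\cap B)\to H_2(A)\oplus H_2(B)\to H_2(D(\hat\Gamma,2))\to H_1(A\cap B)\to H_1(A)\oplus H_1(B)\to\cdots$$
and identify the terms: $H_*(A)=H_*(F(\Gamma,2))$, $H_2(B)=0$, $H_1(B)=H_1(\Gamma_0)\oplus H_1(\Gamma_0)$, $H_1(A\cap B)= H_1(\Gamma_0)^{\oplus 4}$, $H_0(A\cap B)=H_0(\Gamma_0)^{\oplus 4}$. The key computational point is to identify the two halves of the connecting map $H_1(A\cap B)\to H_1(A)\oplus H_1(B)$. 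The map into $H_1(B)=H_1(\Gamma_0)\oplus H_1(\Gamma_0)$ is, on each of the four factors, essentially the fold map onto one of the two summands; taking the difference of the $u$-copy and the $v$-copy in each of the "left" and "right" blocks therefore realizes the kernel/cokernel in terms of $H_1(\Gamma_0)\oplus H_1(\Gamma_0)$. The map into $H_1(A)=H_1(F(\Gamma,2))$ is, on the left block, $(l_u)_*-(l_v)_*$ on the difference classes (and $(l_u)_*+(l_v)_*$ dies after folding), and on the right block it is $(r_u)_*-(r_v)_*$. This is precisely where Lemma \ref{parlink} enters: $(l_u)_*(z)-(l_v)_*(z)=\partial_*\Lk_{v,u}(z)$ and $(r_u)_*(z)-(r_v)_*(z)=\tau_*\partial_*\Lk_{v,u}(z)$. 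Hence the effective connecting homomorphism $H_1(\Gamma_0)\oplus H_1(\Gamma_0)\to H_1(F(\Gamma,2))$ is exactly the map $F$ of formula (\ref{fff}). Exactness of Mayer--Vietoris then immediately yields part (b): the cokernel of $i_*$ on $H_1$ is the image of $H_1(A\cap B)$ that survives, which after the fold identification is $H_0(\Gamma_0)\oplus H_0(\Gamma_0)$ (the $H_0$ part of $A\cap B$ mapping into $H_0(A)\oplus H_0(B)$, where the $D(\Gamma,2)$-side is connected), giving the map $k$ and surjectivity.

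For part (a): $H_2(A\cap B)=0$ and $H_2(B)=0$, so Mayer--Vietoris gives $0\to H_2(F(\Gamma,2))\xrightarrow{i_*} H_2(F(\hat\Gamma,2))\to \ker\big(H_1(A\cap B)\to H_1(A)\oplus H_1(B)\big)\to 0$. After the fold identification, an element of $H_1(A\cap B)$ surviving to $H_1(F(\hat\Gamma,2))$ is a pair $z\oplus z'\in H_1(\Gamma_0)\oplus H_1(\Gamma_0)$ (the difference classes in the left and right blocks) that maps to $0$ in $H_1(F(\Gamma,2))$, i.e. $F(z\oplus z')=0$, i.e. $\partial_*\big(\Lk_{v,u}(z)+\tau_*\Lk_{v,u}(z')\big)=0$ in $H_1(D(\Gamma,2))$. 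Since $\partial\colon Q_\Gamma\to H_1(D(\Gamma,2))$ is injective (noted after Lemma \ref{parlink}), this is equivalent to $\Lk_{v,u}(z)+\tau_*\Lk_{v,u}(z')=0$ in $Q_\Gamma$, which is the defining condition for the group $X$. The $\Z[\Z_2]$-equivariance of every map is clear from the construction, since the involution $\tau$ interchanges the "left" and "right" halves of the decomposition and acts compatibly with the fold maps; identity $I_\Gamma(z\otimes z')=-\tau_*I_\Gamma(z'\otimes z)$ from \cite{BF} guarantees $\tau$ is well-defined on $Q_\Gamma$, and the action $\tau(z\oplus z')=z'\oplus z$ on $X$ follows.

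The main obstacle I anticipate is making the homotopy decomposition $D(\hat\Gamma,2)=A\cup B$ precise enough that $B$ and $A\cap B$ genuinely deformation retract onto the claimed disjoint unions of copies of $\Gamma_0$, and that all the retractions are compatible with $\tau$ and with the inclusions so that the Mayer--Vietoris maps are the stated ones; in particular one must check carefully the local structure of $D(\hat\Gamma,2)$ near the vertices $u$ and $v$ (the cells $e\times f$ where $f$ is an edge of $\Gamma$ incident to $u$ or $v$ but not equal to $e$ are excluded from $D$, and this is exactly what forces the $A\cap B$ piece to involve $\Gamma_0$ rather than a larger subgraph). Once this geometric bookkeeping is done, identifying the connecting map with $F$ via Lemma \ref{parlink} and extracting the two exact sequences is formal.
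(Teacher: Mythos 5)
Your proposal is correct and follows essentially the same route as the paper: the same decomposition $D(\hat\Gamma,2)=D(\Gamma,2)\cup e\Gamma_0\cup\Gamma_0 e$, the same identification of the connecting homomorphism via Lemma \ref{parlink}, and the same use of the injectivity of $\partial\colon Q_\Gamma\to H_1(D(\Gamma,2))$ to pass from $\ker F$ to $X$. The only difference is that the paper uses the long exact sequence of the pair $(\hat D, D)$ together with the excision isomorphism $H_i(\hat D,D)\cong H_{i-1}(\Gamma_0)\oplus H_{i-1}(\Gamma_0)$, whereas you use Mayer--Vietoris with $A\cap B$ equal to four copies of $\Gamma_0$ and then fold; the two are formally interchangeable here. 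The one extra point your version must record (and which the pair sequence gives for free) is that the image of $H_1(B)\cong H_1(\Gamma_0)\oplus H_1(\Gamma_0)$ in $H_1(\hat D)$ already lies in the image of $i_\ast$ --- a cycle $e\times c$ is homologous in $e\Gamma_0$ to $u\times c\subset D$ --- since otherwise exactness at $H_1(F(\hat\Gamma,2))$ in the stated form (with $i_\ast$ from $H_1(F(\Gamma,2))$ alone) would not follow from the Mayer--Vietoris exactness.
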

\begin{proof} We will deal with the discrete configuration spaces $D(\Gamma, 2)$ and $D(\hat \Gamma, 2)$ instead of $F(\Gamma, 2)$ and $F(\hat \Gamma, 2)$.
It will be convenient to denote $D=D(\Gamma, 2)$, $\hat D = D(\hat \Gamma, 2)$. One has
\begin{eqnarray}
\hat D = D\cup e\Gamma_0\cup \Gamma_0 e,
\end{eqnarray}
where $e\Gamma_0$ and $\Gamma_0 e$ denote the cartesian products $e\times \Gamma_0, \, \, \Gamma_0 \times e\, \subset D(\hat \Gamma, 2).$
Note that $e\Gamma_0\cap D = u\Gamma_0 \cup v\Gamma_0$ and similarly $\Gamma_0e \cap D = \Gamma_0u \cup \Gamma_0v$. Besides, $e\Gamma_0\cap \Gamma_0e=\emptyset$.
The involution $\tau$ maps $e\Gamma_0$ onto $\Gamma_0e$ and vice versa. Thus we obtain
\begin{eqnarray*}
H_i(\hat D, D) = H_i((e,\partial e)\times \Gamma_0) \oplus H_i(\Gamma_0\times (e, \partial e))=
H_{i-1}(\Gamma_0) \oplus H_{i-1}(\Gamma_0)
\end{eqnarray*}
and we obtain the following long exact sequence of $\Z[\Z_2]$-modules
\begin{eqnarray*}
0\to &H_2(D)& \stackrel{i_\ast}\to H_2(\hat D) \stackrel{k}\to H_1(\Gamma_0)\oplus H_1(\Gamma_0) \\
 \stackrel{F}\to  &H_1(D)& \stackrel{i_\ast}\to H_1(\hat D) \stackrel{k}\to  H_0(\Gamma_0)\oplus H_0(\Gamma_0)\to 0.
\end{eqnarray*}
The homomorphism $F$ acts as follows: for $z, z' \in H_1(\Gamma_0)$ one has
$$F(z\oplus z') = ({l_u}_\ast-{l_v}_\ast)(z)\oplus ({r_u}_\ast-{r_v}_\ast)(z'),$$
where $l_u, l_v, r_u, r_v$ are defined before Lemma \ref{parlink}. Applying Lemma \ref{parlink} we obtain formula (\ref{fff}).
\end{proof}

We can describe the homomorphism $k: H_1(\hat D) \to H_0(\Gamma_0) \oplus H_0(\Gamma_0)$ (which appears in the proof above) explicitly, as follows. Given a vertex $w\in \Gamma_0$ consider the loop $\alpha_w: S^1\to \hat D$ of the following form. Represent $S^1$ as the union of two arcs $S^1=A\cup B$
and define $\alpha_w|A$ as the path in $D(\hat \Gamma, 2)$ where the first point stays constantly at $w$ and the second point travels along the edge $e$ from $u$ to $v$. The restriction
$\alpha_w|B$ is  a path in $D(\Gamma, 2)$ which starts at $(w, v)$ and ends at $(w, u)$. The homomorphisms $k$ sends the homology class of the loop $\alpha_w$ to
$1_w\oplus 0 \in H_0(\Gamma_0)\oplus H_0(\Gamma_0)$ where $1_w\in H_0(\Gamma_0)$ denotes the class represented by the connected component of $w$.

Since homology classes of loops of type $\alpha_w$ and their images under the involution $\tau$ generate the cokernel of $i_\ast: H_1(D)\to H_1(\hat D)$, this description fully defines $k$.

\begin{corollary}
 Suppose that under the assumptions of Theorem \ref{thm10} one knows that
 the linking homomorphism $\Lk_{v, u}: H_1(\Gamma_0) \to Q_\Gamma$ vanishes.
 Then one has the following exact sequence of $\Z[\Z_2]$-modules
 \begin{eqnarray*}
%\hspace{2cm}
0\to H_r(F(\Gamma, 2)) \stackrel{i_\ast}\to H_r(F(\hat \Gamma, 2)) \to H_{r-1}(\Gamma_0) \oplus H_{r-1}(\Gamma_0)\to 0
 \end{eqnarray*}
 where $r=1, 2$ and the involution $\tau$ acts on $H_{r-1}(\Gamma_0) \oplus H_{r-1}(\Gamma_0) $ by interchanging the summands.
\end{corollary}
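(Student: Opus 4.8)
The plan is to obtain the corollary as a direct specialization of Theorem \ref{thm10}. First I would pass to the homotopy equivalent discrete configuration spaces $D=D(\Gamma,2)$ and $\hat D=D(\hat\Gamma,2)$ and recall the six-term exact sequence of $\Z[\Z_2]$-modules produced in the proof of Theorem \ref{thm10},
\begin{eqnarray*}
0\to H_2(D)\stackrel{i_\ast}\to H_2(\hat D)\stackrel{k}\to H_1(\Gamma_0)\oplus H_1(\Gamma_0)\stackrel{F}\to \\
H_1(D)\stackrel{i_\ast}\to H_1(\hat D)\stackrel{k}\to H_0(\Gamma_0)\oplus H_0(\Gamma_0)\to 0,
\end{eqnarray*}
together with the description of the connecting map $F$ given in formula (\ref{fff}).

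Next I would use the hypothesis $\Lk_{v,u}=0$ to conclude that $F=0$: indeed $F(z\oplus z')=\partial_\ast\Lk_{v,u}(z)+\tau_\ast\partial_\ast\Lk_{v,u}(z')$ vanishes termwise. With $F=0$ the six-term sequence breaks at its middle map into two short exact sequences of $\Z[\Z_2]$-modules. On the left, $k\colon H_2(\hat D)\to H_1(\Gamma_0)\oplus H_1(\Gamma_0)$ becomes surjective, since by exactness its image equals $\ker F$, which is the whole group; this gives $0\to H_2(D)\to H_2(\hat D)\to H_1(\Gamma_0)\oplus H_1(\Gamma_0)\to 0$. On the right, $i_\ast\colon H_1(D)\to H_1(\hat D)$ becomes injective, since its kernel equals $\im F=0$, while $k\colon H_1(\hat D)\to H_0(\Gamma_0)\oplus H_0(\Gamma_0)$ remains surjective by the original exactness; this gives $0\to H_1(D)\to H_1(\hat D)\to H_0(\Gamma_0)\oplus H_0(\Gamma_0)\to 0$. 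Replacing $D$ and $\hat D$ by the homotopy equivalent spaces $F(\Gamma,2)$ and $F(\hat\Gamma,2)$, these are exactly the claimed sequences for $r=2$ and $r=1$ respectively. (For $r=2$ one may alternatively just note that the subgroup $X$ of Theorem \ref{thm10}(a) becomes the full group $H_1(\Gamma_0)\oplus H_1(\Gamma_0)$ once $\Lk_{v,u}=0$.)

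Finally I would record the equivariance. All arrows in the six-term sequence are $\Z[\Z_2]$-linear by Theorem \ref{thm10}, so the two short exact sequences above are sequences of $\Z[\Z_2]$-modules; and the action of $\tau$ on $H_{r-1}(\Gamma_0)\oplus H_{r-1}(\Gamma_0)\cong H_r(\hat D,D)$ interchanges the two summands because $\tau$ interchanges the pieces $e\Gamma_0$ and $\Gamma_0 e$ of $\hat D$ from which the two summands arise. I do not expect a genuine obstacle here: the corollary is a straightforward consequence of Theorem \ref{thm10}, and the only point requiring care is matching the two ends of the six-term sequence with the values $r=2$ and $r=1$ and keeping the $H_1$ versus $H_0$ indexing of $\Gamma_0$ straight.
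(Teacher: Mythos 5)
Your proposal is correct and follows exactly the route the paper intends: the corollary is stated without proof precisely because, as you observe, the hypothesis $\Lk_{v,u}=0$ forces $F=0$ in the six-term sequence from the proof of Theorem \ref{thm10}, splitting it into the two claimed short exact sequences of $\Z[\Z_2]$-modules.
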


\section{Proof of Theorem \ref{thm0}}

In this section we consider again the process of adding an edge to a graph and examine its effect on the group $Q_\Gamma$.

\begin{theorem} \label{thm14} Let $\Gamma$ be a finite connected graph homeomorphic to neither $[0,1]$ nor $S^1$. Let $u,v$ be two vertices in $\Gamma$ that are not joined by an edge and let $\hat \Gamma$ be obtained from $\Gamma$ by adding an edge joining $u$ and $v$. Then one has the following exact sequence
$$0\to (A+\tau A)\to Q_\Gamma \to Q_{\hat\Gamma} \to G\to 0,$$
where $A\subset Q_\Gamma$ denotes the image of the linking homomorphism
$$\Lk_{v, u}: H_1(\Gamma_0) \to Q_\Gamma$$ and $G$ is a free abelian group of rank $2b_0(\Gamma_0)-2$. Here $\Gamma_0$ is obtained from $\Gamma$ by removing $u, v$ and all edges incident to these vertices.
\end{theorem}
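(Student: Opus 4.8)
The plan is to compare the short exact sequence (\ref{secq}) for $\Gamma$ with its analogue for $\hat\Gamma$, using the inclusion $i:D(\Gamma,2)\to D(\hat\Gamma,2)$, and then to run the snake lemma.

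First I would set up the commutative diagram with exact rows
$$
\begin{array}{ccccccccc}
0 & \to & Q_\Gamma & \to & H_1(D(\Gamma,2)) & \stackrel{\alpha_\ast}\to & H_1(\Gamma)\oplus H_1(\Gamma) & \to & 0\\
  &     & \downarrow &  & \downarrow i_\ast &  & \downarrow j &  &  \\
0 & \to & Q_{\hat\Gamma} & \to & H_1(D(\hat\Gamma,2)) & \stackrel{\hat\alpha_\ast}\to & H_1(\hat\Gamma)\oplus H_1(\hat\Gamma) & \to & 0,
\end{array}
$$
in which $j$ is induced by $\Gamma\hookrightarrow\hat\Gamma$ after the K\"unneth identification; commutativity of the right square is naturality of $\alpha$, and then the left square forces $i_\ast(Q_\Gamma)\subset Q_{\hat\Gamma}$. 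Both rows are instances of (\ref{secq}): this requires $\Gamma\not\cong S^1$, which is assumed, and $\hat\Gamma\not\cong S^1$, which holds since otherwise $\hat\Gamma$ would be a cycle graph and $\Gamma=\hat\Gamma-e$ an interval, excluded. Because $\hat\Gamma$ arises from the connected graph $\Gamma$ by adjoining one edge, choosing a path $p$ in $\Gamma$ from $v$ to $u$ and setting $\gamma=e\cup p$ gives $H_1(\hat\Gamma)=H_1(\Gamma)\oplus\Z\gamma$; hence $j$ is injective and $\coker j\cong\Z^2$.

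Running the snake lemma, and using $\ker j=0$, produces an isomorphism $\ker(Q_\Gamma\to Q_{\hat\Gamma})\stackrel{\cong}\to\ker(i_\ast:H_1(D(\Gamma,2))\to H_1(D(\hat\Gamma,2)))$ induced by the inclusion $\partial:Q_\Gamma\hookrightarrow H_1(D(\Gamma,2))$, together with a short exact sequence
$$0\to\coker\bigl(Q_\Gamma\to Q_{\hat\Gamma}\bigr)\to\coker(i_\ast)\to\Z^2\to 0.$$
By Theorem \ref{thm10}(b) the sequence $H_1(D(\Gamma,2))\stackrel{i_\ast}\to H_1(D(\hat\Gamma,2))\stackrel{k}\to H_0(\Gamma_0)\oplus H_0(\Gamma_0)\to 0$ is exact, so $\coker(i_\ast)\cong H_0(\Gamma_0)\oplus H_0(\Gamma_0)$ is free abelian of rank $2b_0(\Gamma_0)$. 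Therefore $G:=\coker(Q_\Gamma\to Q_{\hat\Gamma})$, being a subgroup of this group, is free abelian, and the displayed sequence forces $\rk G=2b_0(\Gamma_0)-2$; this is the right-hand end of the asserted sequence, with $Q_\Gamma\to Q_{\hat\Gamma}$ the map induced by inclusion.

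To finish I must identify $\ker(Q_\Gamma\to Q_{\hat\Gamma})$ with $A+\tau A$; by the isomorphism above it is enough to show $\partial(A+\tau A)=\ker(i_\ast)$. The same Theorem \ref{thm10}(b) identifies $\ker(i_\ast)$ with the image of $F(z\oplus z')=\partial_\ast\Lk_{v,u}(z)+\tau_\ast\partial_\ast\Lk_{v,u}(z')$. Since $\partial_\ast\Lk_{v,u}$ factors as $H_1(\Gamma_0)\stackrel{\Lk_{v,u}}\to Q_\Gamma\stackrel{\partial}\to H_1(D(\Gamma,2))$, and since $\partial$ is injective and $\tau$-equivariant (as recalled after Lemma \ref{parlink}), the image of $F$ equals $\partial(A)+\tau_\ast\partial(A)=\partial(A+\tau A)$; applying $\partial^{-1}$ gives $\ker(Q_\Gamma\to Q_{\hat\Gamma})=A+\tau A$. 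Splicing the kernel sequence with the cokernel sequence at $Q_\Gamma\to Q_{\hat\Gamma}$ then yields the four-term exact sequence of the theorem. The bulk of this is a formal diagram chase; the points needing care are checking that the comparison diagram commutes with exact rows (in particular $\hat\Gamma\not\cong S^1$ and $\coker j\cong\Z^2$) and converting $\im F$ into $A+\tau A$, which rests on the injectivity and $\tau$-equivariance of $\partial$ on $Q_\Gamma$; I do not anticipate a deeper obstacle.
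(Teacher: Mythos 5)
Your proposal is correct and follows essentially the same route as the paper: the authors compare the two instances of the sequence (\ref{secq}) for $\Gamma$ and $\hat\Gamma$ in a commutative diagram whose middle column is the exact sequence of Theorem \ref{thm10}(b) and whose right column records the addition of one generator to $H_1$, then read off the left column — which is precisely your snake-lemma computation of $\ker$ and $\coker$ of $Q_\Gamma\to Q_{\hat\Gamma}$. Your explicit justification that $\im F=\partial(A+\tau A)$ via the injectivity and $\tau$-equivariance of $\partial$ on $Q_\Gamma$ is a point the paper leaves implicit, and it is handled correctly.
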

\begin{proof} Consider the commutative diagram
$$
\begin{array}{ccccccccc}
&&0&& 0 &&&&\\
&&\downarrow && \downarrow &&&&\\
&&A+\tau A&&A+\tau A && 0&&\\
&&\downarrow &&\downarrow &&\downarrow&&\\
0 &\to & Q_\Gamma & \to & H_1(F(\Gamma, 2)) & \stackrel{\alpha_\ast}\to & H_1(\Gamma\times \Gamma) & \to & 0\\
&&\, \, \downarrow \gamma && \downarrow &&\downarrow&&\\
0 &\to & Q_{\hat \Gamma} & \to & H_1(F(\hat\Gamma, 2)) & \stackrel{\alpha_\ast}\to & H_1(\hat \Gamma\times \hat \Gamma) & \to & 0\\
&&\downarrow &&\downarrow&& \downarrow &&\\
0&\to&G&\to& H_0(\Gamma_0)\oplus H_0(\Gamma_0)& \stackrel \beta\to& \Z\oplus \Z& \to & 0\\
&&\downarrow &&\downarrow &&\downarrow&&\\
&&0&& 0 && 0&&
\end{array}
$$
The vertical exact sequence in the middle is given by statement (b) of Theorem \ref{thm10}. The vertical exact sequence on the right is obvious: adding an edge adds a summand $\Z$ to the first homology group of the graph. This gives the vertical exact sequence on the left; in other words, the kernel of $\gamma$ is $A+\tau A$ and the cokernel of $\gamma$ is a free abelian group of rank $2b_0(\Gamma_0)-2$, as claimed.
\end{proof}

Note that the condition that there is no edge joining $u$ and $v$ can be achieved by subdividing any edges between $u$ and $v$, if necessary.

\begin{corollary}\label{cor15}
Let $\hat \Gamma= \Gamma\cup e$ be obtained from a finite connected graph $\Gamma$ (which is homeomorphic to neither $[0,1]$ nor $S^1$) by adding an edge $e$ attached to two vertices
$\{u, v\}$ such that there is no edge between $u$ and $v$ in $\Gamma$ and $\Gamma-\{u, v\}$ is connected. If $Q_\Gamma=0$ then $Q_{\hat \Gamma}=0$.
\end{corollary}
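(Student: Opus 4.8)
The plan is to derive this directly from Theorem \ref{thm14}, whose four-term exact sequence already carries out all of the homological work; what remains is to feed in the two hypotheses and read off the relevant ranks. First I would check that the hypotheses of Theorem \ref{thm14} are met: $\Gamma$ is homeomorphic to neither $[0,1]$ nor $S^1$, and $u,v$ are not joined by an edge in $\Gamma$. Thus Theorem \ref{thm14} applies and gives the exact sequence
$$0 \to (A + \tau A) \to Q_\Gamma \to Q_{\hat\Gamma} \to G \to 0,$$
where $A = \im(\Lk_{v,u}) \subseteq Q_\Gamma$ is the image of the linking homomorphism and $G$ is a free abelian group of rank $2b_0(\Gamma_0)-2$, with $\Gamma_0$ obtained from $\Gamma$ by deleting $u,v$ and all incident edges.

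Next I would substitute $Q_\Gamma = 0$. Since $A + \tau A$ is a subgroup of $Q_\Gamma$, it is then trivial, and the homomorphism $Q_\Gamma \to Q_{\hat\Gamma}$ in the sequence is the zero map. Exactness therefore collapses the sequence to an isomorphism $Q_{\hat\Gamma} \cong G$, so it suffices to prove that $G$ vanishes, i.e. that its rank is zero. For this I would invoke the connectivity hypothesis: as recorded in Section 4, $\Gamma_0$ is connected if and only if $\Gamma - \{u,v\}$ is connected, and the latter is assumed, so $b_0(\Gamma_0) = 1$. Hence $G$ is free abelian of rank $2\cdot 1 - 2 = 0$, which forces $G = 0$, and therefore $Q_{\hat\Gamma} = 0$.

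I do not anticipate a genuine obstacle here: the entire substance lies in Theorem \ref{thm14} (and ultimately in the commutative diagram used to prove it), and the corollary is precisely the degenerate case in which both the kernel term $A + \tau A$ and the cokernel term $G$ of that four-term sequence are trivial. The only point requiring a moment's attention is the bookkeeping that the connectivity of $\Gamma - \{u,v\}$ yields $b_0(\Gamma_0) = 1$, so that the free abelian group $G$ of rank $2b_0(\Gamma_0) - 2$ actually vanishes rather than merely being small; without that hypothesis one would only conclude that $Q_{\hat\Gamma}$ is a free abelian group of rank $2b_0(\Gamma_0) - 2$.
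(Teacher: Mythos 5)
Your proof is correct and is exactly the argument the paper intends: Corollary \ref{cor15} is stated immediately after Theorem \ref{thm14} precisely because it is the degenerate case of that exact sequence, with $A+\tau A \subseteq Q_\Gamma = 0$ and $G$ of rank $2b_0(\Gamma_0)-2 = 0$ by the connectivity hypothesis. Nothing is missing.
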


Corollary \ref{cor15} is equivalent to Theorem \ref{thm0}.

It would be interesting to characterize all graphs which can be obtained from the Kuratowski graphs $K_5$ and $K_{3,3}$ by subdivision and by subsequent adding of edges
satisfying the condition of Corollary \ref{cor15}.

\begin{figure}[h]
\begin{center}
\resizebox{6cm}{5cm}{\includegraphics[69,274][489,668]{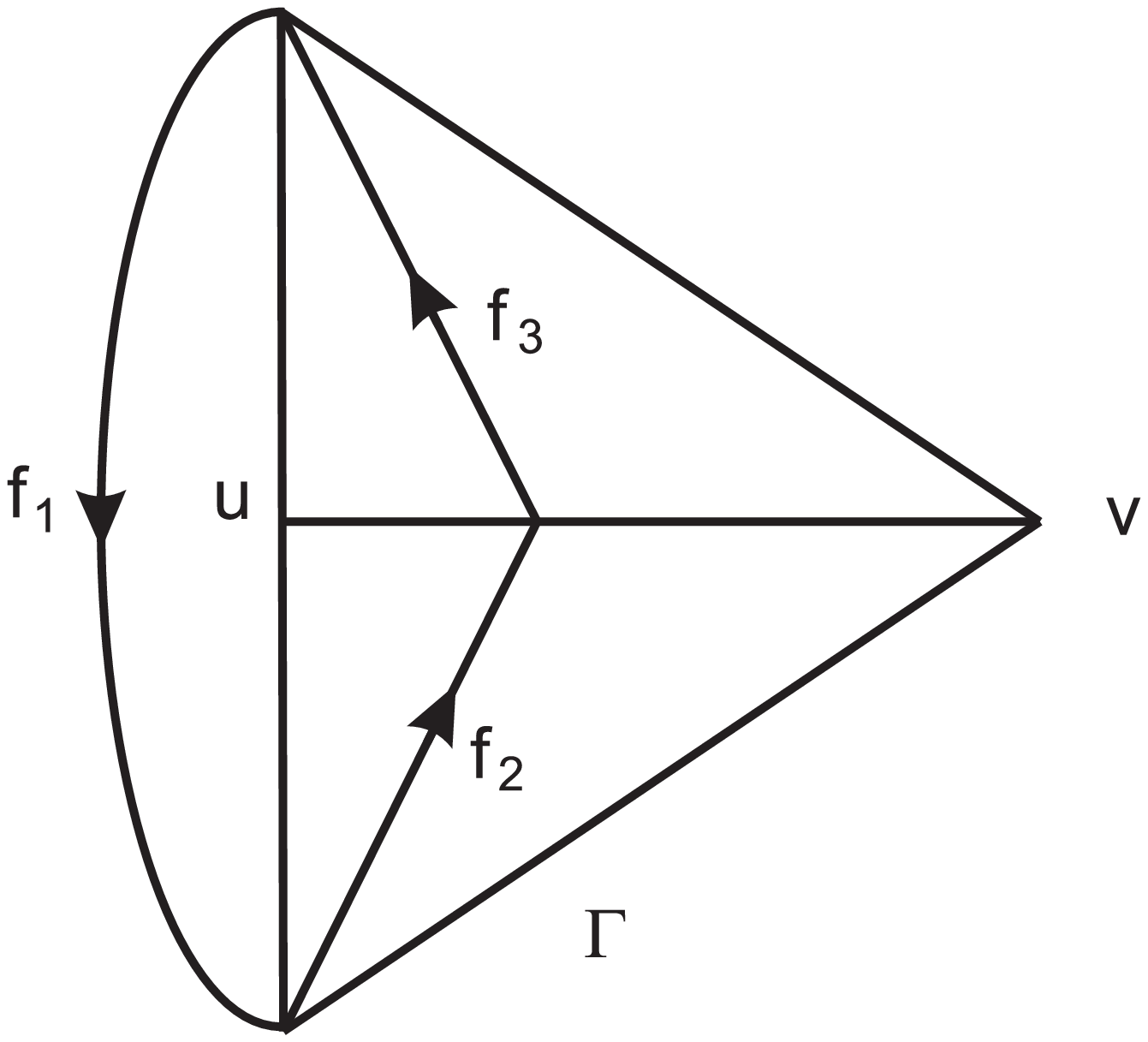}}
\end{center}
\caption{Graph $\Gamma$ with $Q_\Gamma=\Z$.}\label{gamma4}
\end{figure}

\begin{example} Consider the graph $\Gamma$ shown in Figure \ref{gamma4}. It is planar and satisfies the conditions of Theorem 7.3  from \cite{BF}; thus by Corollary 7.4 from \cite{BF} one has $Q_\Gamma=\Z$. Removing the vertices $u$ and $v$ does not disconnect the graph.
Let $\hat \Gamma$ be the result of adding an edge $e$ connecting $u$ and $v$. Clearly one has $\hat \Gamma = K_5$ and therefore $Q_{\hat\Gamma}=0$.
Applying Theorem \ref{thm14} we see that the subgroup $A+\tau A$ is isomorphic to $\Z$ in this case, i.e. $A=\tau A = \Z$.
One also sees that $u$ and $v$ are linked with respect to the triangular cycle in $\Gamma$ and the equation $A=\tau A = \Z$ can be confirmed by a direct calculation.
\end{example}

\begin{figure}[h]
\begin{center}
\resizebox{5cm}{4cm}{\includegraphics[95,324][489,668]{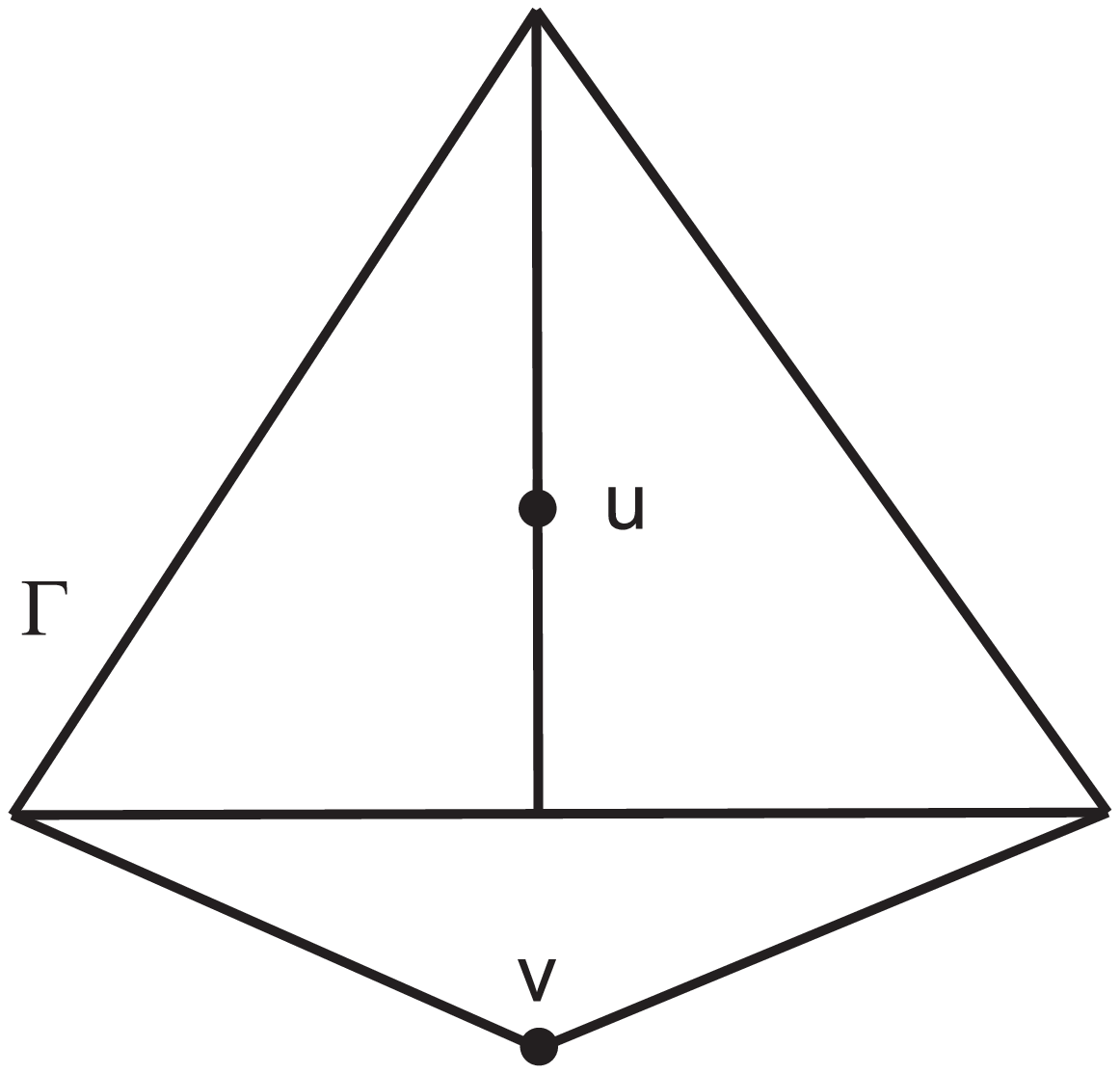}}
\end{center}
\caption{Graph $\Gamma$ with $Q_\Gamma=\Z$.}\label{gamma3}
\end{figure}

At this point, we may compare Theorem \ref{thm14} (in conjunction with formula (\ref{btwo})) with Theorem 4.2 of \cite{Patty62}. In \cite{Patty62}, the author considers attaching a new edge to a graph by gluing each of its endpoints to a univalent vertex. Theorem 4.2 of \cite{Patty62} implies in particular that the resulting change in the second Betti number of the configuration space is always an even number. Let us test this statement in the case of the graph $\hat{\Gamma}$ we considered above. Note that $\hat{\Gamma}$ may be obtained from $\Gamma$ by first attaching two \lq\lq free\rq\rq\, edges (as in Theorem \ref{enlarge1}, (a)), one with $\Gamma \cap e_1 = u$, one with $\Gamma \cap e_2 = v$ and then attaching an edge joining the two resulting univalent vertices, in the manner of \cite{Patty62}, Theorem 4.2. The first of these steps does not alter $b_2(F(\Gamma,2))$, by Theorem \ref{enlarge1}. The graph $\Gamma$ has 
$b_2(F(\Gamma,2))=0$ by Theorem 6.1 in \cite{BF} and the graph $\hat{\Gamma}$ has $b_2(F(\hat{\Gamma},2) )=1$, by Example 4.1 in \cite{BF}. 
Thus we conclude that Theorem 4.2 of \cite{Patty62} cannot be correct. 

As another example illustrating Theorem \ref{thm14}, consider the graph $\Gamma$ shown on Figure \ref{gamma3}: one has $Q_\Gamma=\Z$ and adding an edge connecting the vertices $u, v$ produces a graph
$\hat \Gamma = K_{3,3}$; we know that $Q_{\hat \Gamma}=0$, i.e. $K_{3,3}$ is mature. In this case we also have $A=\Z$ and $A=\tau A$.

\section{Examples of mature and non-mature graphs}

%In this section we consider examples of mature and non mature graphs.

\begin{proposition}
\label{univalent}
Let $\Gamma$ be a graph having a univalent vertex. Then $\Gamma$ is not mature.
\end{proposition}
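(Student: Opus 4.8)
The plan is to deduce this from Theorem~\ref{enlarge1}(a) together with the Betti number formula of Corollary~\ref{cor2}, arguing by induction on the number of edges of $\Gamma$. We may assume $\Gamma$ is connected and not homeomorphic to $[0,1]$, since otherwise $\Gamma$ fails to be mature directly from Definition~\ref{defmature}. A graph with a univalent vertex is never homeomorphic to $S^1$, so all the results of \S2 apply to $\Gamma$; hence by Corollary~\ref{matureint} it suffices to prove $Q_\Gamma\neq 0$, and in fact I will prove the sharper statement $\rk Q_\Gamma>0$.

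Let $w$ be a univalent vertex of $\Gamma$, let $e$ be the unique edge at $w$, and let $v$ be its other endpoint. Deleting $w$ and the interior of $e$ gives a connected subgraph $\Gamma'$ with $\Gamma=\Gamma'\cup e$ and $\Gamma'\cap e=\{v\}$; collapsing the pendant edge shows $\Gamma\simeq\Gamma'$, so $b_1(\Gamma)=b_1(\Gamma')$. Write $m=\mu_{\Gamma'}(v)$, so that $\mu_\Gamma(v)=m+1$; the case $m=0$ cannot occur, as then $\Gamma=e\cong[0,1]$. By Theorem~\ref{enlarge1}(a) one has $b_1(F(\Gamma,2))=b_1(F(\Gamma',2))+2m-2$. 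When $\Gamma'\not\cong S^1$, applying Corollary~\ref{cor2} to both $\Gamma$ and $\Gamma'$ and using $b_1(\Gamma)=b_1(\Gamma')$ converts this into the key identity
\[
\rk Q_\Gamma=\rk Q_{\Gamma'}+2(m-1).
\]
If $m\geq 2$ this already gives $\rk Q_\Gamma\geq 2>0$. If $m=1$ then $\rk Q_\Gamma=\rk Q_{\Gamma'}$, the vertex $v$ is univalent in $\Gamma'$, and $\Gamma'\not\cong[0,1]$ (otherwise, $v$ being an end of the path $\Gamma'$, the graph $\Gamma$ would itself be a path homeomorphic to $[0,1]$); so the inductive hypothesis applies to $\Gamma'$ and yields $\rk Q_{\Gamma'}>0$, hence $\rk Q_\Gamma>0$.

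The one remaining case is $\Gamma'\cong S^1$, which forces $m=2$ and $b_1(\Gamma)=b_1(\Gamma')=1$. Here Corollary~\ref{cor2} does not apply to $\Gamma'$, but $F(S^1,2)\simeq S^1$ gives $b_1(F(\Gamma',2))=1$ directly, so Theorem~\ref{enlarge1}(a) yields $b_1(F(\Gamma,2))=3$ and then Corollary~\ref{cor2} applied to $\Gamma$ gives $\rk Q_\Gamma=3-2=1>0$. Since each step of the induction strictly decreases the number of edges and must terminate either at this $S^1$-case, at the case $m\geq 2$, or at a contradiction with $\Gamma\not\cong[0,1]$, the argument is well founded.

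I expect the only real care to be required in the bookkeeping around the two exceptional graphs $S^1$ and $[0,1]$: one must check that the inductive reduction never leaves the class of connected graphs with a univalent vertex that are not homeomorphic to $[0,1]$, and one must treat $\Gamma'\cong S^1$ separately because Proposition~\ref{prop1} and Corollary~\ref{cor2} exclude the circle. Beyond this there is no genuine difficulty — the whole content is carried by the identity $\rk Q_\Gamma=\rk Q_{\Gamma'}+2(\mu_{\Gamma'}(v)-1)$, which makes the cokernel of the intersection form strictly grow (or at worst persist) each time a whisker is attached.
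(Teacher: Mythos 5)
Your proof is correct and follows essentially the same route as the paper's: both deduce $\rk\, Q_\Gamma>0$ from Theorem \ref{enlarge1}(a) combined with the Betti-number formula of Corollary \ref{cor2} (equivalently equation (\ref{six})). The only differences are cosmetic --- where you induct on the number of edges in the case $m=1$, the paper amalgamates subdivision vertices so as to assume $\mu_\Gamma(u)\ge 3$ outright, and your separate treatment of $\Gamma'\cong S^1$ is a welcome extra care, since the paper's stated bound $\rk\, Q_\Gamma\ge 2$ actually drops to $1$ for a circle with a whisker attached, though non-maturity still follows.
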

\begin{proof}
We may assume that $\Gamma$ is not homeomorphic to $[0,1]$. Suppose that $v$ is a univalent vertex of a graph $\Gamma$
which is incident to an edge $e$.
Let $u$ be the other vertex incident to $e$. Without loss of generality we may assume that $\mu_\Gamma(u)\ge 3$; in the case $\mu_\Gamma(u)=2$ we may change the subdivision of the graph and amalgamate it with the following edges. Here $\mu_\Gamma(u)$ denotes the number of edges of $\Gamma$ incident to $u$. 
Applying Theorem \ref{enlarge1} (the second statement of part (a)) and equation (\ref{six}) of Proposition \ref{prop1}, we see that $Q_\Gamma$ has rank $\ge 2\mu_\Gamma(u)-4\ge 2$ which implies that $\Gamma$ is not mature. See Figure
\ref{nonexamples}, left.
\end{proof}
\begin{figure}[h]
\begin{center}
\resizebox{12cm}{5cm}{\includegraphics[17,483][561,714]{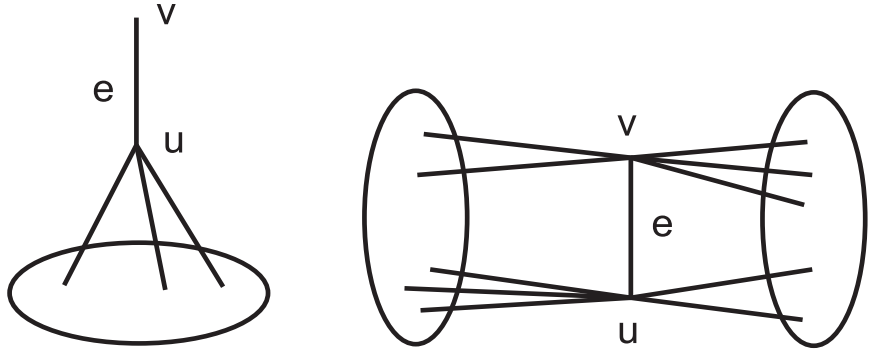}}
\end{center}
\caption{Non-mature graphs.}\label{nonexamples}
\end{figure}

\begin{proposition} 
\label{prop: non-mature}
Let $\Gamma$ be a graph such that removing the closure of an edge makes it disconnected. Then $\Gamma$ is not mature.
\end{proposition}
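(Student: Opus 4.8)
The plan is first to translate the hypothesis into the language of Sections 4--6. Write $e=[u,v]$ for the edge whose closure $\bar e$ disconnects $\Gamma$, and let $\Gamma_0=\Gamma-\{u,v\}$ be the graph obtained by deleting $u$, $v$ and all edges incident to them. Passing from $\Gamma\setminus\bar e$ to $\Gamma_0$ only removes open ``whiskers'' attached to single vertices of $\Gamma_0$, so $\Gamma\setminus\bar e$ deformation retracts onto $\Gamma_0$ and the hypothesis is exactly that $b_0(\Gamma_0)\ge 2$. We may assume $\Gamma$ is connected (maturity being defined only for connected graphs) and $\Gamma\not\cong[0,1]$, so $\Gamma_0\neq\emptyset$. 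If $u$ or $v$ is univalent in $\Gamma$, then $\Gamma$ is not mature by Proposition \ref{univalent}; so from now on assume $\mu_\Gamma(u),\mu_\Gamma(v)\ge 2$. Now I would split into two cases according to whether $\Gamma^-$, the graph obtained from $\Gamma$ by deleting the open edge $e$ while keeping $u$ and $v$, is connected.

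Case 1: $\Gamma^-$ is connected. Then $u$ and $v$ are not joined by an edge in $\Gamma^-$, and $\Gamma$ arises from $\Gamma^-$ by adding the edge $e$ between them. If $\Gamma^-\cong S^1$, then $\Gamma$ is a theta graph, hence planar, hence not mature by \cite{BF}, Corollary 7.2; if $\Gamma^-\cong[0,1]$, then $\Gamma$ is a cycle with tails, so either it has a univalent vertex (Proposition \ref{univalent}) or it is $\cong S^1$, which has no closure-disconnecting edge and so does not occur. In the remaining case Theorem \ref{thm14} (applied with $\Gamma^-$ in place of its ``$\Gamma$'') gives the exact sequence
$$0\to (A+\tau A)\to Q_{\Gamma^-}\to Q_\Gamma\to G\to 0,$$
where $G$ is free abelian of rank $2b_0(\Gamma_0)-2$; the graph called ``$\Gamma_0$'' in Theorem \ref{thm14} is our $\Gamma_0$, since deleting $u,v$ already deletes $e$. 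As $b_0(\Gamma_0)\ge 2$, the group $G$ is nonzero and $Q_\Gamma$ surjects onto it, so $Q_\Gamma\neq 0$ and $\Gamma$ is not mature by Corollary \ref{matureint}.

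Case 2: $\Gamma^-$ is disconnected, i.e. $e$ is a bridge. Since $\Gamma$ is connected, $\Gamma^-=\Gamma'_1\sqcup\Gamma'_2$ with $u\in\Gamma'_1$, $v\in\Gamma'_2$, and $\Gamma$ is obtained from $\Gamma'_1\sqcup\Gamma'_2$ by joining these components by $e$, so Theorem \ref{enlarge1}(b) gives
$$b_2(F(\Gamma,2))=b_2(F(\Gamma'_1,2))+b_2(F(\Gamma'_2,2))+2b_1(\Gamma'_1)b_1(\Gamma'_2).$$
If one of the $\Gamma'_i$ is homeomorphic to $[0,1]$, then (since $u,v$ are not univalent in $\Gamma$) it has a free end which is a univalent vertex of $\Gamma$, and Proposition \ref{univalent} applies; if both $\Gamma'_i$ are circles, $\Gamma$ is the planar handcuff graph and \cite{BF}, Corollary 7.2 applies. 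Otherwise each $\Gamma'_i$ is either homeomorphic to $S^1$ or connected and homeomorphic to neither $[0,1]$ nor $S^1$. Feeding the rank formula of Corollary \ref{cor2} into the displayed identity for each non-circle factor (and $b_2(F(S^1,2))=0$ for each circle factor), and comparing with Corollary \ref{cor2} applied to $\Gamma$ itself, a direct computation gives $\rk Q_\Gamma\ge 1$; when neither factor is a circle it yields the clean identity $\rk Q_\Gamma=\rk Q_{\Gamma'_1}+\rk Q_{\Gamma'_2}+2\mu_{\Gamma'_1}(u)+2\mu_{\Gamma'_2}(v)-3\ge 1$. Hence $Q_\Gamma\neq 0$ and $\Gamma$ is not mature.

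The only genuinely routine step is the arithmetic in Case 2. The main obstacle — and the reason the proof is not a single line — is the proliferation of degenerate sub-cases, in which $\Gamma^-$ or one of the pieces $\Gamma'_i$ is an interval or a circle or an endpoint turns out univalent; the point to verify is that every such sub-case is already subsumed either by Proposition \ref{univalent} or by the planarity obstruction of \cite{BF}, so that no new phenomenon appears beyond careful case management.
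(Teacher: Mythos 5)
Your proof is correct and follows essentially the same route as the paper's: the same dichotomy on whether the graph $\Gamma^-$ obtained by deleting the open edge is connected, with Theorem \ref{thm14} supplying the free quotient $G$ of rank $2b_0(\Gamma_0)-2\ge 2$ of $Q_\Gamma$ in the connected case, and Theorem \ref{enlarge1}(b) combined with Corollary \ref{cor2} yielding the identity $\rk Q_\Gamma=\rk Q_{\Gamma'_1}+\rk Q_{\Gamma'_2}+2(\mu(u)-1)+2(\mu(v)-1)+1\ge 1$ in the disconnected case. The only difference is that you explicitly dispose of the degenerate sub-cases (where $\Gamma^-$ or one of its components is homeomorphic to $[0,1]$ or $S^1$, which the hypotheses of Theorem \ref{thm14} and Corollary \ref{cor2} formally exclude), whereas the paper leaves these implicit.
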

\begin{proof}
Let $e$ be an edge of $\Gamma$ with end points $u, v$. Let $\Gamma'$ denote the result of removing the interior of $e$ from $\Gamma$, see Figure
\ref{nonexamples}, right. Then $\Gamma$ is obtained from $\Gamma'$ by adding an edge with end points $u, v\in \Gamma'$. Suppose first that $\Gamma'$ is connected. Since $\Gamma'-\{u,v\}$ is disconnected, Theorem \ref{thm14} implies that $Q_\Gamma$ contains a free abelian subgroup of rank $2b_0(\Gamma'-\{u, v\})-2 \ge 2$. Thus $\Gamma$ is not mature. Now suppose that $\Gamma'$ is not connected. Then $\Gamma$ and $\Gamma'$ are related as in Theorem \ref{enlarge1}(b). Equations (\ref{btwo}) and (\ref{btwoagain}) imply that
\[
\mathrm{rk} \, Q_\Gamma = \mathrm{rk} \, Q_{\Gamma'_1} + \mathrm{rk} \, Q_{\Gamma'_2} + 2(\mu(u) - 1) + 2(\mu(v) -1) + 1.
\]
Here $\Gamma'_1$ and $\Gamma'_2$ are as in Theorem \ref{enlarge1}(b) and $\mu(u)$ and $\mu(v)$ denote the valences of $u$ and $v$ in $\Gamma'$. Thus we again see that $\Gamma$ is not mature.
\end{proof}

We say that a graph has a \emph{double edge} if it is homeomorphic to a 1-dimensional cell complex containing a pair of edges with the same endpoints. If $\Gamma$ is a graph having a double edge then $\Gamma$ is homeomorphic to a graph which can be disconnected by removing the closure of an edge. Thus we have the following:

\begin{proposition}
\label{double edge}
If $\Gamma$ is a graph with a double edge then $\Gamma$ is not mature.
\end{proposition}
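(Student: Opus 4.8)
The plan is to deduce this from Proposition \ref{prop: non-mature} together with the fact, noted in the introduction, that maturity is a topological property.

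First I would treat the case $\Gamma\cong S^1$ on its own: here $F(\Gamma,2)\simeq S^1$ (as remarked after Corollary \ref{cor2}), so $H_1(F(\Gamma,2))\cong\Z$ while $H_1(\Gamma\times\Gamma)\cong\Z^2$, hence $\alpha_\ast$ is not an isomorphism and $\Gamma$ is not mature. This case genuinely has to be separated off, since a circle does have a double edge but its complement never becomes disconnected after deleting a closed edge.

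Now suppose $\Gamma\not\cong S^1$. By the definition of a double edge, and since maturity is invariant under homeomorphism, I may replace $\Gamma$ by a homeomorphic simplicial graph $\Gamma'$ that contains an edge $e_1$ and an arc $e_2$ (a union of edges) with the same endpoints $u,v$, in such a way that $e_1\cup e_2$ is a circle and a proper subcomplex of $\Gamma'$; the properness can be arranged because $\Gamma\not\cong S^1$, after refining the cell structure so that one of the two parallel arcs is the single edge $e_1$. The key step is to verify that $\Gamma'$ satisfies the hypothesis of Proposition \ref{prop: non-mature}, i.e. that removing the closed edge $\bar e_1$ disconnects $\Gamma'$. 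For this, observe that $\mathrm{int}(e_2)=e_2\setminus\{u,v\}$ is a connected component of $\Gamma'\setminus\bar e_1$: it is open in $\Gamma'$, and it is also closed in $\Gamma'\setminus\bar e_1$ because its closure in $\Gamma'$ is $e_2=\mathrm{int}(e_2)\cup\{u,v\}$ and $\{u,v\}\subseteq\bar e_1$. Since $e_1\cup e_2$ is a proper subcomplex, $\Gamma'\setminus\bar e_1$ has points outside $\mathrm{int}(e_2)$, so it has at least two components.

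Applying Proposition \ref{prop: non-mature} to $\Gamma'$ then shows $\Gamma'$ is not mature, and hence neither is $\Gamma$. I expect the only real friction to be bookkeeping: ``disconnectable by removing the closure of an edge'' is not itself a homeomorphism invariant, so one must be a little careful to pick a simplicial structure in which the separating arc $e_1$ is a genuine edge; once that is set up, the rest is immediate. (Alternatively, one could bypass Proposition \ref{prop: non-mature} and argue that for a graph with a double edge the right-hand side of formula (\ref{btwo}) is forced to be negative unless $\mathrm{rk}\,Q_\Gamma\ge 1$, and conclude by Corollary \ref{matureint}; but verifying that inequality in general seems to require essentially the same local analysis, so I would not take that route.)
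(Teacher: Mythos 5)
Your proof is correct and follows essentially the same route as the paper: the paper's one-line justification is precisely that a graph with a double edge is homeomorphic to one that can be disconnected by removing the closure of an edge, after which Proposition \ref{prop: non-mature} and the topological invariance of maturity give the result. Your separate treatment of the case $\Gamma\cong S^1$ and the explicit check that $\mathrm{int}(e_2)$ becomes a connected component are details the paper leaves implicit (indeed its blanket claim fails literally for $S^1$, which has a double edge but cannot be disconnected by deleting a closed edge), but they refine rather than change the argument.
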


A graph $\Gamma$ is {\it a wedge} of two subgraphs $\Gamma_1, \Gamma_2\subset \Gamma$ if  $\Gamma = \Gamma_1\cup \Gamma_2$ and the intersection
$\Gamma_1\cap \Gamma_2$ is a single vertex.

We say that a graph $\Gamma$ is {\it a double wedge} of two subgraphs $\Gamma_1, \Gamma_2\subset \Gamma$ if  $\Gamma = \Gamma_1\cup \Gamma_2$ and the intersection
$\Gamma_1\cap \Gamma_2$ consists of two vertices, see Figure \ref{gamma9}.

%\begin{figure}
%\begin{center}
%\includegraphics{gamma9.eps}
%\end{center}
%\caption{Wedge (a) and double wedge (b).}\label{gamma9}
%\end{figure}

%\begin{figure}
%\begin{center}
%\resizebox{13cm}{4cm}{\includegraphics[24,512][565,671]{wedge.eps}}
%\end{center}
%\caption{Wedge (left) and double wedge (right).}\label{gamma9}
%\end{figure}
\begin{figure}
\begin{center}
\resizebox{13cm}{4cm}{\includegraphics[20,512][565,671]{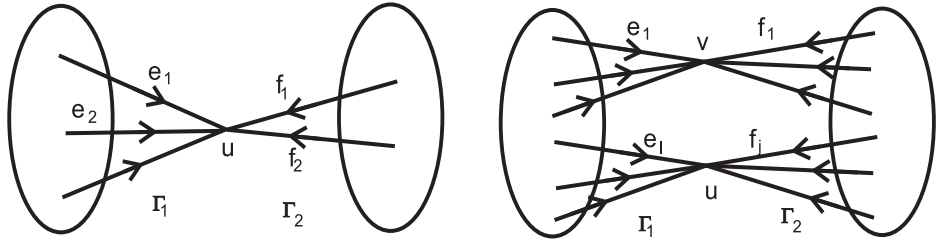}}
\end{center}
\caption{Wedge (left) and double wedge (right).}\label{gamma9}
\end{figure}

\begin{proposition}\label{prop18}
\label{maturity v connectedness}
If a graph $\Gamma$ is a wedge or a double wedge of two subgraphs $\Gamma_1, \Gamma_2\subset \Gamma$ such that each of $\Gamma_1$ and $\Gamma_2$ is connected and not homeomorphic to $[0,1]$, then $\Gamma$ is not mature.
\end{proposition}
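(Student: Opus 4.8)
The plan is to reduce the double-wedge case to the single-wedge case, and then to reduce the single-wedge case to situations already handled by the results of sections 3 and 6. First I would treat the \emph{wedge} case: if $\Gamma = \Gamma_1 \cup \Gamma_2$ with $\Gamma_1 \cap \Gamma_2 = \{p\}$ a single vertex, then choose an edge $e_1$ of $\Gamma_1$ and an edge $e_2$ of $\Gamma_2$. Observe that removing the closure of any edge whose deletion does not disconnect $\Gamma$ would put us in Proposition~\ref{prop: non-mature}; but more directly, I would argue that $\Gamma$ contains, after a possible subdivision, an edge whose closure separates the two halves — namely if we subdivide so that $p$ becomes an interior point of an edge $e$, then removing the interior of $e$ disconnects $\Gamma$ into pieces containing $\Gamma_1 \setminus\{p\}$ and $\Gamma_2\setminus\{p\}$ respectively (each nonempty and in fact each with positive $b_1$ since neither $\Gamma_i$ is homeomorphic to $[0,1]$ — here I should be careful: $\Gamma_i$ not homeomorphic to $[0,1]$ together with connectedness forces $\Gamma_i$ to contain a cycle or a vertex of valence $\ge 3$, hence to have nontrivial $D(\Gamma_i,2)$ behaviour). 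Then Proposition~\ref{prop: non-mature} applies directly and $\Gamma$ is not mature.

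For the \emph{double-wedge} case, where $\Gamma_1 \cap \Gamma_2 = \{u, v\}$, the cleanest route is to view $\Gamma$ as built by adding an edge. Subdivide if necessary so that $u$ and $v$ are not joined by an edge in $\Gamma$, then consider $\Gamma' = \Gamma_1 \cup \Gamma_2$ minus one well-chosen edge, or alternatively pick out a subgraph $\Gamma''\subset \Gamma$ obtained by deleting from $\Gamma_2$ everything except a single arc from $u$ to $v$ — call this arc $e$, so that $\Gamma = \Gamma''' \cup e$ where $\Gamma''' \supseteq \Gamma_1$ and $\Gamma''' \cap e = \{u,v\}$. Now apply Theorem~\ref{thm14} to the pair $(\Gamma''', e)$: the exact sequence there shows $Q_\Gamma$ contains a free abelian subgroup of rank $2 b_0((\Gamma''')_0) - 2$. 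The point is to choose $\Gamma_2$'s contribution so that $(\Gamma''')_0 = \Gamma''' - \{u,v\}$ is disconnected — which will hold precisely because $\Gamma_1 - \{u,v\}$ and the rest of $\Gamma_2 - \{u,v\}$ (if any) lie in separate components, and because $\Gamma_1$ being connected, not an interval, and meeting $e$ only in $\{u,v\}$ means $\Gamma_1 - \{u,v\}$ is itself nonempty. If instead deleting $e$ disconnects $\Gamma'''$, one invokes Theorem~\ref{enlarge1}(b) and the rank formula displayed in the proof of Proposition~\ref{prop: non-mature} to conclude $\operatorname{rk} Q_\Gamma \ge 1 > 0$. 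Either way $\Gamma$ is not mature by Corollary~\ref{matureint}.

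The main obstacle I anticipate is the bookkeeping around degenerate sub-cases, not the central idea: one must check that after subdividing to separate $u$ and $v$, the graph $\Gamma_0 = \Gamma''' - \{u,v\}$ genuinely has $b_0 \ge 2$, which requires knowing that $\Gamma_1 - \{u,v\}$ is nonempty and disjoint (within $\Gamma'''$) from whatever survives of $\Gamma_2$. The hypothesis that each $\Gamma_i$ is connected and \emph{not} homeomorphic to $[0,1]$ is exactly what rules out the collapse $\Gamma_i - \{u,v\} = \emptyset$ (an interval with endpoints $u,v$), so the argument should go through cleanly once these cases are enumerated. A secondary subtlety is ensuring the subgraph $\Gamma'''$ we pass to is still connected and not homeomorphic to $S^1$ or $[0,1]$, so that Theorem~\ref{thm14} is applicable; this again follows from the non-degeneracy of $\Gamma_1$. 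I would organize the write-up as: (1) dispose of the wedge case via Proposition~\ref{prop: non-mature}; (2) in the double-wedge case, reduce to adding a single arc; (3) apply Theorem~\ref{thm14} or Theorem~\ref{enlarge1}(b) according to whether the residual graph stays connected; (4) conclude via Corollary~\ref{matureint}.
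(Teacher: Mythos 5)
Your wedge case is essentially correct and coincides with the paper's own first observation: a wedge can be disconnected by removing the closure of an edge incident to the wedge point (the hypothesis $\Gamma_1\not\cong[0,1]$ guarantees both remaining pieces are nonempty), so Proposition~\ref{prop: non-mature} applies. (The phrase ``subdivide so that $p$ becomes an interior point of an edge'' is backwards --- subdivision creates vertices and cannot turn a vertex of valence $\ge 3$ into an interior point --- but this is cosmetic; just take any closed edge of $\Gamma_1$ at $p$.)

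The double-wedge case has a genuine gap. Theorem~\ref{thm14} governs the addition of a single \emph{edge}, so your reduction needs the arc from $u$ to $v$ inside $\Gamma_2$ to be, or to be amalgamated into, a single edge; that is possible only when its interior vertices have valence $2$, i.e.\ essentially when $u$ and $v$ are adjacent inside one of the $\Gamma_i$. In general they are not: take $\Gamma_1=\Gamma_2=K_{3,3}$ glued along two vertices $u,v$ lying in the same part of each bipartition. Every $u$--$v$ arc then passes through a trivalent vertex, and no \emph{edge} of $\Gamma$ has the property that deleting its two endpoints disconnects $\Gamma$ (the only disconnecting pair is $\{u,v\}$, which is not an edge). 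Adding the arc edge by edge does not rescue the argument: at each intermediate step the pair of vertices fed into Theorem~\ref{thm14} is not $\{u,v\}$, the corresponding $\Gamma_0$ stays connected, so $G=0$ and the sequence $0\to A+\tau A\to Q_{\Gamma}\to Q_{\hat\Gamma}\to 0$ is perfectly consistent with $Q_{\hat\Gamma}=0$ --- the kernel term can only shrink the cokernel, never certify that it is nonzero. So your method establishes non-maturity only for double wedges whose two gluing vertices are adjacent. The paper's proof is genuinely different here: it builds a functional $J=\sum\{f_{e_if_j}\}:H_2(N_\Gamma,\partial N_\Gamma)\to\Z$ over incident pairs with $e_i\subset\Gamma_1$, $f_j\subset\Gamma_2$, checks that $J$ takes only even values on the image of $I_\Gamma$ (the one nonzero contribution, $J(I_\Gamma(z_0\otimes z_0))=-2$ for the cycle $z_0$ crossing both halves, is even), and exhibits an explicit relative cycle $a$ with $J(a)=1$, whence $I_\Gamma$ is not onto and $Q_\Gamma\neq 0$. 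You need this, or some other argument not reducible to single-edge addition, to cover the non-adjacent case.
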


% First suppose that $\Gamma = \Gamma_1 \vee \Gamma_2$. Then $\Gamma$ contains an edge $e$ such that $\Gamma - \bar{e}$ is disconnected so $\Gamma$ is not mature by Proposition \ref{prop: non-mature}.
%Recall that a vertex is essential if it is incident to at least three edges.

%In Proposition \ref{prop18} we do not allow $\Gamma_i$ to be homeomorphic to $[0,1]$.
%If one of the graphs $\Gamma_1$ or $\Gamma_2$ is homeomorphic to $[0,1]$ then $\Gamma$ contains a univalent vertex and thus $\Gamma$ is immature by Proposition \ref{univalent}.
\begin{proof} Note that if $\Gamma$ decomposes as a wedge, it can be disconnected by removing the closure of an edge and hence is not mature by Proposition \ref{prop: non-mature}. We offer an alternative proof of non-maturity below. 

We will use Corollary \ref{matureint} and show that for the graphs mentioned in the Proposition the intersection form $I_\Gamma$ is not surjective.

We assume that the intersection $\Gamma_1\cap \Gamma_2$ consists either of one (case one) or of two (case two) vertices.

Denote by $e_i$ and $f_j$ the edges of $\Gamma_1$ and $\Gamma_2$ respectively which are incident to the intersection $\Gamma_1\cap \Gamma_2$.
We will assume that $e_i$ and $f_j$ are oriented towards the vertices of $\Gamma_1\cap \Gamma_2$, see Figure \ref{gamma9}.

Recall that for every pair of oriented edges $e$ and $e'$ of $\Gamma$ with $e\cap e'\not=\emptyset$ one has defined the cohomology class
$$\{f_{ee'}\}\in H^2(N_\Gamma, \partial N_\Gamma)= \Hom(H_2(N_\Gamma, \partial N_\Gamma); \Z), $$
see \cite{BF}, \S 5.
Thus we may consider the homomorphism
\begin{eqnarray}
J= \sum \{f_{e_if_j}\}: H_2(N_\Gamma, \partial N_\Gamma) \to \Z,
\end{eqnarray}
where summation is taken over all pairs $e_i$ and $f_j$ with $e_i\cap f_j\not=\emptyset$. Intuitively, given two cycles $z, z'\in H_1(\Gamma)$, the number $J(I_\Gamma(z\otimes z'))$
\lq\lq counts instances\rq\rq\,  when $z$ and $z'$ are close to each other and
$z$ lies in $\Gamma_1$ and $z'$ lies in $\Gamma_2$.
We claim that:

{\it (a) in case one $J$ vanishes on the image of the intersection form $$I_\Gamma: H_1(\Gamma) \otimes H_1(\Gamma) \to H_2(N_\Gamma, \partial N_\Gamma);$$

 (b) in case two $J$ takes even values on the image of $I_\Gamma$.}

 Note that in case one, $H_1(\Gamma)= H_1(\Gamma_1)\oplus H_1(\Gamma_2)$. Similarly, in case two one has
 $H_1(\Gamma)= H_1(\Gamma_1)\oplus H_1(\Gamma_2)\oplus \Z$
 where the additional summand $\Z$ is represented by
  a cycle $z_0$ that is the union of a path in $\Gamma_1$ from $u$ to $v$ and a path in $\Gamma_2$ from $v$ to $u$; here $\Gamma_1\cap \Gamma_2= \{u, v\}$.

  In case one, examining $J(I_\Gamma(z\otimes z'))\in \Z$ for $z, z'$ lying in $H_1(\Gamma_1)$ or in $H_1(\Gamma_2)$ (four cases) one obtains (a).
  In case two, one has to consider the number $J(I_\Gamma(z\otimes z'))$ for $z, z'$ lying in $H_1(\Gamma_1)$ or in $H_1(\Gamma_2)$ or for $z, z'$ being equal to $z_0$ (nine cases in total); the only nonzero result is $J(I_\Gamma(z_0\otimes z_0))=-2$. Thus (b) follows.

  In view of (a) and (b) Proposition \ref{prop18} follows once we show that there exists a homology class $a\in H_2(N_\Gamma, \partial N_\Gamma)$ with $J(a)=1$.
  \begin{figure}[h]
\begin{center}
\resizebox{9cm}{4.2cm}{\includegraphics[8,340][574,576]{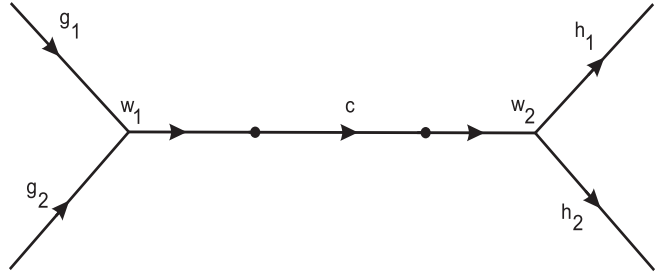}}
\end{center}
\caption{Construction of $a\in H_2(N_\Gamma, \partial N_\Gamma)$.}\label{tail}
\end{figure}
  Choose $w_i\in \Gamma_i$ to satisfy one of the following (i) $w_i$ has valence $\geq 3$ in $\Gamma_i$ or (ii) $w_i \in \{u,v\}$ and $w_i$ has valence $\geq 2$ in $\Gamma_i$. We do not exclude the case that $w_1 = w_2$. The assumption that $\Gamma_1, \Gamma_2 \ncong [0,1]$ implies we can always choose such $w_i$. Let $c$ be a simple path
  in $\Gamma$ connecting $w_1$ to $w_2$. We view $c$ as a cellular chain $c\in C_1(\Gamma)$ with $\partial c= w_2-w_1$. Consider the edges $g_1, g_2 \in \Gamma_1$, $h_1, h_2 \in \Gamma_2$ incident
  to $w_1, w_2$, as shown on Figure \ref{tail}. We may assume, by subdividing if necessary, that the graph shown in Figure \ref{tail} is embedded in $\Gamma$. The product
  $$a=(g_1+c+h_1)(g_2+c+h_2) \in C_2(\Gamma\times \Gamma, D(\Gamma, 2))=C_2(N_\Gamma, \partial N_\Gamma)$$
  is a relative cycle and obviously $J(a)=1$. Here $D(\Gamma, 2)$ is the discrete configuration space, see \S 2.
  This completes the proof.\end{proof}

Note that statement (b) from the proof becomes false for triple and higher order wedges.

% The following statement is a special case of Proposition \ref{prop18}:
% \begin{proposition}
% \label{prop: non-mature}
% Let $\Gamma$ be a graph such that removing the closure of an edge makes it disconnected. Then $\Gamma$ is not mature.
% \end{proposition}
%\begin{proof}
%Let $e$ be an edge of $\Gamma$ with end points $u, v$. Let $\Gamma'$ denote the result of removing the interior of $e$ from $\Gamma$. Then $\Gamma$ is obtained from $\Gamma'$ by adding an edge with end points $u, v\in \Gamma'$. Since $\Gamma'-\{u, v\}$ is disconnected, Theorem \ref{thm14} implies that $C_\Gamma$ contains a free abelian subgroup of rank $2b_0(\Gamma'-\{u, v\})-2 \ge 2$. See Figure
%\ref{nonexamples}, right.
%\end{proof}

As a useful result producing mature graphs we may mention the following.

\begin{proposition}\label{addition} Assume that $\Gamma=\Gamma'\cup \Gamma''$ is the union of two mature subgraphs such that
the edges incident to any vertex $v \in \Gamma'\cap \Gamma''$
lie either  all in $\Gamma'$ or all in $\Gamma''$ (see Figure \ref{example2}). If the intersection $\Gamma'\cap \Gamma''$ is connected then $\Gamma$ is mature.
\end{proposition}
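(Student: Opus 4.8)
The plan is to use Corollary \ref{matureint}: it suffices to show that the intersection form $I_\Gamma$ is surjective, i.e. $Q_\Gamma=0$, given that $Q_{\Gamma'}=0$ and $Q_{\Gamma''}=0$. The key structural observation is that under the hypothesis — every edge at a vertex of $\Gamma'\cap\Gamma''$ lies entirely in one of the two pieces — the graph $\Gamma$ is built from $\Gamma'$ by repeatedly attaching an edge (or, after collecting edges, a whole subgraph) along either a single vertex or along a connected subgraph of $\Gamma'\cap\Gamma''$. Concretely, I would order the edges of $\Gamma''$ not in $\Gamma'$ and add them one at a time, at each stage keeping the intermediate graph of the form $\Gamma'\cup(\text{partial }\Gamma'')$ with connected intersection, and apply Theorem \ref{thm14} or Theorem \ref{enlarge1} at each step.

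The heart of the argument is a single inductive step. Suppose $\Sigma = \Sigma_1\cup\Sigma_2$ with $Q_{\Sigma_1}=0$, the intersection $\Sigma_1\cap\Sigma_2$ connected, and we add one more edge $e$ of $\Sigma_2$ with endpoints $u,v$. There are two cases. If $e$ attaches along a single vertex (one endpoint is new, or $e$ is a loop at a vertex of the already-built graph), then statement (a) of Theorem \ref{enlarge1} applies and does not change $H_2(F(-,2)) \cong \ker I$, so by Proposition \ref{prop1} and Corollary \ref{cor2} the rank of $Q$ is unchanged — wait, this is not quite right, since adding a free edge does change $b_1(F)$ and hence $\operatorname{rk} Q$ by $2\mu(v)-2$. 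So I must be more careful: I should not add edges that create univalent vertices at intermediate stages. Instead, the right inductive scheme is to add the edges of $\Gamma''\setminus\Gamma'$ in an order so that at every stage the added portion is a connected subgraph meeting the already-built graph in a connected set, and each new edge $e=uv$ has both endpoints $u,v$ already present. Then Theorem \ref{thm14} gives the four-term exact sequence $0\to(A+\tau A)\to Q_{\text{old}}\to Q_{\text{new}}\to G\to 0$ with $G$ free abelian of rank $2b_0((\text{old graph})-\{u,v\})-2$.

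So the two things to establish at each step are: (i) $Q_{\text{old}}=0$ (induction hypothesis), which forces $A+\tau A=0$ and leaves $0\to Q_{\text{new}}\to G\to 0$, i.e. $Q_{\text{new}}\cong G$; and (ii) $G=0$, i.e. $b_0((\text{old graph})\setminus\{u,v\})=1$. Point (ii) is where connectedness of $\Gamma'\cap\Gamma''$ and the "edges stay in one piece" hypothesis get used: since $u,v$ lie in $\Gamma'$ (or in the already-built part which is mature, hence has no univalent vertices and, being a union of such pieces, stays well-connected), removing $u,v$ should not disconnect — but this needs an argument, since removing two vertices from $\Gamma'$ alone could disconnect $\Gamma'$. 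The resolution is that $\Gamma'$ and $\Gamma''$ being mature each have all vertices of valence $\ge 2$ and in fact are $2$-connected enough (via Proposition \ref{univalent} and Proposition \ref{prop: non-mature}: a mature graph cannot be disconnected by removing an edge, hence is $2$-edge-connected) — I would extract from the earlier non-maturity results that a mature graph minus any two vertices is connected, or more precisely reduce to that. This verification — that $(\text{old graph})\setminus\{u,v\}$ stays connected at each stage — is the main obstacle, and I expect it to require genuinely using that \emph{both} $\Gamma'$ and $\Gamma''$ are mature together with the hypothesis that $\Gamma'\cap\Gamma''$ is connected and edge-homogeneous, so that $u$ and $v$ can always be joined outside the vertex being examined by routing through whichever of $\Gamma',\Gamma''$ does not need them as cut vertices.

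Assuming (ii) is handled, the induction closes: $Q$ stays $0$ at every stage, so $Q_\Gamma=0$ and $\Gamma$ is mature. I would also need to dispatch the trivial base cases ($\Gamma'\cap\Gamma''$ a single vertex, the "wedge-like" but with mature pieces — here Theorem \ref{enlarge1}(a) type reasoning or Theorem \ref{thm14} with $b_0=1$ applies directly) and note that $\Gamma$ is not homeomorphic to $[0,1]$ or $S^1$ so Proposition \ref{prop1} and its corollaries are available throughout.
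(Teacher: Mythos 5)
Your plan is not the paper's (the paper proves this by a Mayer--Vietoris argument on the cellular chain complexes of the pairs $(N_{\Gamma'},\partial N_{\Gamma'})$, $(N_{\Gamma''},\partial N_{\Gamma''})$, using that $H_1(C'\cap C'')=0$ unless $\Gamma'\cap\Gamma''\cong S^1$, plus an explicit generator in the circle case), and as it stands it has a gap that I do not see how to close. The heart of your induction requires that every edge of $\Gamma''\setminus\Gamma'$ be attached with \emph{both endpoints already present} in the intermediate graph, so that Theorem \ref{thm14} applies and, with $Q_{\mathrm{old}}=0$ and $G=0$, yields $Q_{\mathrm{new}}=0$. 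But $\Gamma''$ in general has vertices that do not lie in $\Gamma'$, and the first edge you attach at such a vertex necessarily creates a univalent vertex in the intermediate graph (you cannot have "both endpoints already present" for a vertex that is not yet in the graph, and adjoining it as an isolated vertex disconnects the graph, which Theorem \ref{thm14} forbids). By Theorem \ref{enlarge1}(a) each such attachment strictly increases $\rk\, Q$, so the intermediate graphs are \emph{not} mature and your induction hypothesis (i), $Q_{\mathrm{old}}=0$, fails from that point on. Once $Q_{\mathrm{old}}\neq 0$, Theorem \ref{thm14} only kills the part $A+\tau A$ coming from the linking homomorphisms, and showing that these images eventually exhaust the accumulated $Q$ would require explicit computations of $\Lk_{v,u}$ that you have not supplied and that seem genuinely hard at this level of generality. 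You flag a version of this problem yourself ("wait, this is not quite right") but then assert an ordering of the edges that cannot exist.

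Your step (ii) is also not secured: you would need that each intermediate graph minus the two attachment vertices is connected, and you propose to deduce this from maturity of $\Gamma'$ and $\Gamma''$. The paper does not prove that a mature graph is $3$-connected (Propositions \ref{univalent} and \ref{prop: non-mature} only rule out univalent vertices and bridge edges), and in any case the intermediate graphs are not mature, so no such lemma would apply to them. By contrast, the paper's actual proof avoids all intermediate graphs: it shows directly that $H_2(N_{\Gamma'},\partial N_{\Gamma'})$ and $H_2(N_{\Gamma''},\partial N_{\Gamma''})$ together generate $H_2(N_\Gamma,\partial N_\Gamma)$ (this is where connectedness of $\Gamma'\cap\Gamma''$ enters, via the vanishing of $H_1$ of $C'\cap C''=C_*(N_{\Gamma'\cap\Gamma''},\partial N_{\Gamma'\cap\Gamma''})$), and then quotes surjectivity of $I_{\Gamma'}$ and $I_{\Gamma''}$; the only delicate case is $\Gamma'\cap\Gamma''\cong S^1$, where the cokernel of $H_2(C')\oplus H_2(C'')\to H_2(C)$ is $\Z$ and one exhibits a generator of the form $I_\Gamma(z_1\otimes z_2)$ by hand. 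If you want to salvage an edge-by-edge argument you would have to reorganize it so that maturity is preserved at every stage (as the paper does in its $K_n$ and $K_{p,q}$ inductions, where the graphs are explicit), which is not possible for an arbitrary mature $\Gamma''$.
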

\begin{figure}
\begin{center}
\resizebox{12cm}{4cm}{\includegraphics[8,367][582,573]{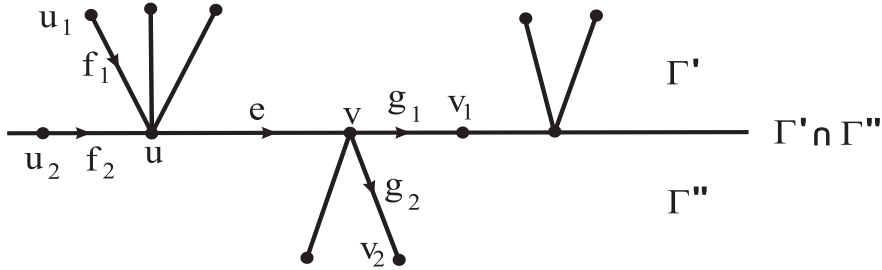}}
\end{center}
\caption{Union of mature graphs.}\label{example2}
\end{figure}

\begin{proof} From Proposition \ref{prop11} we know that the inclusions $(N_{\Gamma'}, \partial N_{\Gamma'})\to (N_{\Gamma}, \partial N_{\Gamma})$ and
$(N_{\Gamma''}, \partial N_{\Gamma''})\to (N_{\Gamma}, \partial N_{\Gamma})$ induce monomorphisms in two-dimensional homology. We want to show that the images of
the groups $H_2(N_{\Gamma'}, \partial N_{\Gamma'})$ and $H_2(N_{\Gamma''}, \partial N_{\Gamma''})$ generate $H_2(N_{\Gamma}, \partial N_{\Gamma})$.

Denote by $C$ the cellular chain complex $C_\ast(N_\Gamma, \partial N_{\Gamma})$. Similarly denote by $C'$ and $C''$ the cellular chain complexes
$C_\ast(N_{\Gamma'}, \partial N_{\Gamma'})$ and $C_\ast(N_{\Gamma''}, \partial N_{\Gamma''})$. An explicit description of $C, C', C''$ is given in \cite{BF}, \S 3.
The assumptions of the Proposition imply that $C=C'+C''$. Thus we have the exact sequence
$$\dots \to H_2(C')\oplus H_2(C'') \stackrel{f}{\to} H_2(C) \stackrel{\partial}\to H_1(C'\cap C'') \to \dots$$
Clearly the intersection $C'\cap C''$ coincides with the cellular chain complex $C_\ast(N_{\Gamma'\cap\Gamma''}, \partial N_{\Gamma'\cap\Gamma''})$.

Let us first deal with the case that $\Gamma' \cap \Gamma'' \ncong S^1$. The conditions of the proposition guarantee that $\Gamma' \cap \Gamma'' \ncong [0,1]$. This is because if $\Gamma' \cap \Gamma'' \cong [0,1]$ then the intersection has some extremal vertex $v$ and contains a single edge $e$ emanating from $v$. The other edges emanating from $v$ must all be contained in $\Gamma' - \Gamma''$ or all contained in $\Gamma'' - \Gamma'$. This means that $v$ is univalent in one of $\Gamma'$, $\Gamma''$ and hence one of them is not mature, by Proposition \ref{univalent}.

Now since we know $\Gamma' \cap \Gamma'' \ncong [0,1]$ and we are assuming $\Gamma' \cap \Gamma'' \ncong S^1$, we can use Corollary 2.5 from \cite{BF} to conclude that $H_1(C'\cap C'')=0$. Thus, the exact sequence above implies that the images of $H_2(C')$ and $H_2(C'')$ generate $H_2(C)$.

Consider the image of the intersection form $I_\Gamma: H_1(\Gamma) \otimes H_1(\Gamma) \to H_2(N_\Gamma, \partial N_\Gamma)=H_2(C)$. Since each of the graphs
$\Gamma'$ and $\Gamma''$ is mature, the images of the intersection forms $I_{\Gamma'}$ and $I_{\Gamma''}$ coincide with the subgroups $H_2(C')\subset H_2(C)$ and
$H_2(C'')\subset H_2(C)$. Thus, it follows that the intersection form $I_\Gamma$ is surjective.

Now let us look at the case that $\Gamma' \cap \Gamma'' \cong S^1$. In this case $$H_1(C'\cap C'') \cong H_1(N_{S^1}, \partial N_{S^1}) \cong H_1(S^1 \times S^1, F(S^1,2)) \cong \Z.$$ Thus $\mathrm{coker} \, f \cong \Z$. Since we know that $I_{\Gamma'}$ and $I_{\Gamma''}$ are surjective, it suffices to display an element $\alpha$ of $H_2(C)$ that generates $\mathrm{coker}(f)$ and show that it lies in $\mathrm{Im} (I_\Gamma)$.

Let $e$ be an edge of $\Gamma' \cap \Gamma''$ joining vertices $u$ and $v$ such that all edges emanating from $u$ lie in $\Gamma'$ and all edges emanating from $v$ lie in $\Gamma''$,
see Figure \ref{example2}. Let $f_i, g_i, u_i, v_i$ be as indicated in Figure \ref{example2} for $i=1,2$.
%\begin{figure}[h]
%\begin{center}
%\includegraphics{gamma8.eps}
%\end{center}
%\caption{The case $\Gamma' \cap \Gamma'' \cong S^1$.}\label{gamma8}
%\end{figure}

We construct our element $\alpha$ as follows. Since $\Gamma'$ is mature, $\Gamma' - \bar{e}$ is connected (by Proposition \ref{prop: non-mature}) so there is a path $\gamma_1$ in $\Gamma' - \bar{e}$ from $v_1$ to $u_1$. Similarly there is a path $\gamma_2$ in $\Gamma'' - \bar{e}$ from $v_2$ to $u_2$. Let $z_1$, $z_2$ be the cycles
\[
%\begin{array}{rcl}
z_1 =  \gamma_1 + f_1 + e + g_1, \quad
z_2  =  \gamma_2 + f_2 + e + g_2
%\end{array}
\]
and let $\alpha = I_\Gamma(z_1 \otimes z_2)$.

Next we show that $\alpha$ generates $\mathrm{coker}(f)$. Note that all elements $I_{\Gamma'}(z \otimes z')$ in $\mathrm{Im} (I_{\Gamma'})$ that contain $ee$ must contain $g_1g_1$ and vice versa. Similarly, all elements of $\mathrm{Im}(I_{\Gamma''})$ that contain $ee$ must contain $f_2f_2$ and vice versa. It follows that all elements of $\mathrm{Im}(f)$ that contain $k (ee)$ for some $k \in \Z$ must also contain $m (g_1 g_1) + n (f_2f_2)$ for some $m,n \in \Z$ with $m +n = k$. Note that $\alpha$ does not satisfy this property (as it has no terms $g_1g_1$ or $f_2f_2$), so $\alpha \notin \mathrm{Im}(f)$.

Suppose that $\alpha$ does not generate $\mathrm{coker}(f)$. Then there is some $l \geq 2$ and some $\beta \in H_2(C)$ such that $\alpha - l \beta \in \mathrm{Im}(f)$. The coefficient of $ee$ in $\alpha - l \beta$ is of the form $k = 1 + lk'$, the coefficient of $g_1g_1$ is of the form $m = lm'$ and the coefficient of $f_2f_2$ is of the form $n = ln'$. These can never satisfy $k = m + n$, so $\alpha - l \beta$ cannot be in $\mathrm{Im} (f)$.
\end{proof}

We will use Theorem \ref{thm0} to prove the following statements.
\begin{proposition}
The complete graph $K_n$ is mature for $n\ge 5$.
\end{proposition}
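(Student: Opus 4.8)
The plan is to argue by induction on $n$, with Theorem \ref{thm0} as the only engine. The base case $n=5$ is the maturity of $K_5$, recorded in the introduction (from \cite{BF}, \S 4). For the inductive step I assume $K_n$ is mature for some $n\ge 5$ and build $K_{n+1}$ out of a subdivided copy of $K_n$ by a sequence of edge additions, each preserving maturity by Theorem \ref{thm0}.

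Concretely, label the vertices of $K_{n+1}$ by $1,\dots,n+1$. First I would subdivide the edge $\{1,2\}$ of $K_n$ (on vertices $1,\dots,n$) by a new vertex, call it $n+1$; the resulting graph $\Gamma^{(0)}$ is homeomorphic to $K_n$, hence mature, maturity being a topological property. In $\Gamma^{(0)}$ the vertex $n+1$ is joined only to $1$ and $2$, and $1,2$ are no longer directly adjacent. Next, for $j=3,4,\dots,n$ in turn I would adjoin the edge $\{n+1,j\}$. At each such step the two endpoints are not already joined by an edge, and deleting them leaves the full subgraph on $\{1,\dots,n\}\setminus\{j\}$, which is $K_{n-1}$ with at most one edge removed and therefore connected (this is where $n\ge 5$ is used). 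So Theorem \ref{thm0} applies and every intermediate graph is mature. After all these additions the graph $\Lambda$ has vertex set $\{1,\dots,n+1\}$, contains every edge of $K_{n+1}$ except $\{1,2\}$, and is mature. Finally, $1$ and $2$ are still non-adjacent in $\Lambda$, and $\Lambda-\{1,2\}$ is the complete graph on $\{3,\dots,n+1\}$, hence connected; one more application of Theorem \ref{thm0} shows $\Lambda\cup\{1,2\}=K_{n+1}$ is mature.

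The only delicate point, and the thing that forces the initial subdivision, is bookkeeping the hypotheses of Theorem \ref{thm0} at every stage: one must never adjoin an edge between two vertices that are already adjacent (this would create a double edge and destroy maturity, cf.\ Proposition \ref{double edge}), and the condition that the complement minus the two endpoints be connected must hold each time. Subdividing $\{1,2\}$ at the start is exactly what lets the new vertex be grown to full valence by genuinely new edges and then lets the edge $\{1,2\}$ be restored at the end; the connectivity conditions are then all easy consequences of the high connectivity of complete graphs on $\ge 4$ vertices. I expect this verification to be routine, so the main task is simply to organize the induction cleanly rather than to overcome a genuine obstacle.
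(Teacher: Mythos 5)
Your proposal is correct and follows essentially the same route as the paper: subdivide the edge $\{1,2\}$ of $K_n$ to create the new vertex, grow that vertex to full valence by repeated applications of Theorem \ref{thm0}, and restore the edge $\{1,2\}$, checking the connectivity hypothesis at each stage. The only difference is the (immaterial) order of the edge additions --- the paper reinserts $\{1,2\}$ after the first new edge $\{n+1,3\}$ rather than at the very end --- and both orderings satisfy the hypotheses of Theorem \ref{thm0} throughout.
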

\begin{figure}[h]
\begin{center}
\resizebox{10cm}{5cm}{\includegraphics[50,501][558, 793]{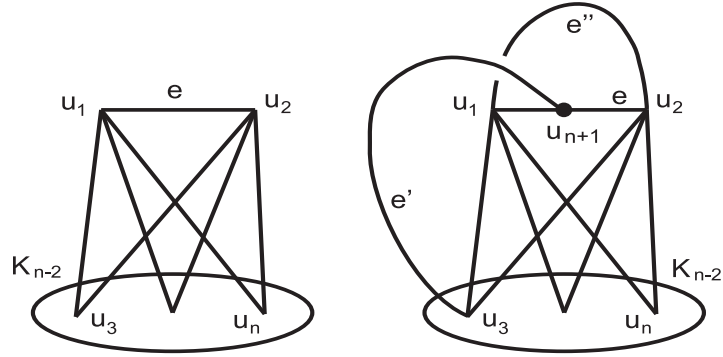}}
\end{center}
\caption{Modifying the complete graph.}\label{complete}
\end{figure}
\begin{proof} We use induction on $n$ starting with $n=5$; we know that $K_5$ is mature as shown in \cite{BF}.

Assuming that $K_n$ with $n\ge 5$ is mature,
we modify it by a sequence of moves ending at $K_{n+1}$, such that all the intermediate graphs we obtain are mature.

Let $u_1, \dots, u_n$ denote the vertices of $K_{n}$. Consider the edge $e$ connecting $u_1$ and $u_2$ (see Figure \ref{complete}, left). We subdivide it by introducing an
additional vertex $u_{n+1}$ in the middle. It is obvious that the new graph is also mature.

Next we add an edge $e'$ connecting $u_3$ and $u_{n+1}$, see Figure \ref{complete}, right. We claim that we may apply Theorem \ref{thm0} to conclude that the graph we obtain is mature. Indeed,
removing the vertices $u_3$ and $u_{n+1}$ from the graph obtained in the previous stage yields a connected graph; the graph we get deformation retracts onto $K_{n-1}$ with an edge removed.

Now we add a new edge $e''$ connecting $u_1$ and $u_2$. By Theorem \ref{thm0} the result is a mature graph.

Finally, we add (one by one) the edges connecting $u_4, \dots, u_n$ with $u_{n+1}$; in each of these cases Theorem \ref{thm0} is applicable.
 The graph obtained at the end is $K_{n+1}$.
\end{proof}

\begin{proposition}\label{prop21} The bipartite graph $K_{p,q}$ with $p\ge 3$ and $q\ge 3$ is mature.
\end{proposition}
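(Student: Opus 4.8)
The plan is to induct on $p+q$. The base case is $K_{3,3}$, which is mature by \cite{BF}, \S 4. Using the symmetry $K_{p,q}\cong K_{q,p}$, it suffices to prove that if $K_{p,q}$ is mature with $p\ge 3$ and $q\ge 3$, then $K_{p+1,q}$ is mature; write the parts as $A=\{a_1,\dots,a_{p+1}\}$ and $B=\{b_1,\dots,b_q\}$ (all graphs appearing below have first Betti number at least $4$, so none is homeomorphic to $[0,1]$ or $S^1$). One cannot simply imitate the proof for $K_n$: subdividing an edge $a_1b_q$ of $K_{p,q}$ at a new vertex $a_{p+1}$, then attaching the edges $a_{p+1}b_1,\dots,a_{p+1}b_{q-1}$ and re-attaching $a_1b_q$ one at a time, produces not $K_{p+1,q}$ but the larger graph $\hat\Gamma:=K_{p+1,q}\cup e$, where $e$ is the spurious edge $a_1a_{p+1}$ arising as one half of the subdivided edge (in $K_n$ the analogous half is a genuine edge of $K_{n+1}$, which is why that argument closes up).

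The first step is therefore to carry out this construction and verify that each edge-attachment is a legitimate application of Theorem \ref{thm0}. The only thing to check is that, before each attachment, deleting the two vertices to be joined together with all their incident edges leaves a connected graph. At every stage this residual graph is a copy of $K_{p,q-1}$, possibly with one edge deleted; since $p\ge 3\ge 2$ and $q-1\ge 2$ the graph $K_{p,q-1}$ is $2$-connected, hence stays connected after deletion of one edge. As $K_{p,q}$ and hence its subdivision is mature, repeated use of Theorem \ref{thm0} shows $\hat\Gamma$ is mature, i.e. $Q_{\hat\Gamma}=0$.

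The second step recovers $K_{p+1,q}$ from $\hat\Gamma$ via Theorem \ref{thm14}, applied to $\Gamma=K_{p+1,q}$ and $\hat\Gamma=\Gamma\cup e$, with the role of $(u,v)$ played by $(a_1,a_{p+1})$ (not joined by an edge in $K_{p+1,q}$). Here $\Gamma_0=K_{p+1,q}-\{a_1,a_{p+1}\}=K_{p-1,q}$, which is connected because $p-1\ge 2$; hence $b_0(\Gamma_0)=1$ and the group $G$ of Theorem \ref{thm14} has rank $2b_0(\Gamma_0)-2=0$, so $G=0$. Combined with $Q_{\hat\Gamma}=0$, the exact sequence of Theorem \ref{thm14} collapses to an isomorphism $Q_{K_{p+1,q}}\cong A+\tau A$, where $A$ is the image of $\Lk_{a_{p+1},a_1}\colon H_1(K_{p-1,q})\to Q_{K_{p+1,q}}$.

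The final step is to show $\Lk_{a_{p+1},a_1}=0$, and this is exactly where the hypothesis $q\ge 3$ enters. The group $H_1(K_{p-1,q})$ is generated by the $4$-cycles $c$ running through $a_i,b_j,a_{i'},b_{j'}$ with $i\ne i'$ in $\{2,\dots,p\}$ and $j\ne j'$ in $\{1,\dots,q\}$. Given such a $c$, since $q\ge 3$ we may choose $j''$ outside $\{j,j'\}$; then the path $a_1 b_{j''} a_{p+1}$ is a $1$-chain $a$ in $K_{p+1,q}$ with $\partial a=a_1-a_{p+1}$, and it is disjoint from $c$, since it meets $c$ in neither vertices nor edges ($a_1,a_{p+1}$ do not lie on $c$ and $b_{j''}\notin\{b_j,b_{j'}\}$). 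By Lemma \ref{lmlink}, $\Lk_{a_{p+1},a_1}(c)=0$; since these $4$-cycles generate $H_1(K_{p-1,q})$ the homomorphism vanishes, so $A=0$ and $Q_{K_{p+1,q}}=0$, completing the induction. I expect the bulk of the work to be the bookkeeping of the connectivity conditions in the first step; the one conceptual point is recognising that the spurious edge forces one to pass through the exact sequence of Theorem \ref{thm14} rather than to reach $K_{p+1,q}$ by Theorem \ref{thm0} alone.
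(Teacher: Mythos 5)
Your proof is correct, but it takes a genuinely different route from the paper's in the inductive step. The paper increments $q$ rather than $p$: starting from $K_{p,q}$ it first \emph{adds} a new edge joining two vertices $v_1,v_2$ of the part $P$ (Theorem \ref{thm0} applies since the residual graph deformation retracts onto $K_{p-2,q}$), then subdivides this newly added edge at a vertex $w$, and finally joins $w$ to the remaining vertices of $P$ one at a time. Because both halves of the subdivided edge run from $w$ into $P$, they are genuine edges of $K_{p,q+1}$ and the construction closes up with no leftover edge; the whole argument therefore uses only Theorem \ref{thm0} and the subdivision-invariance of maturity. You instead subdivide an existing edge of $K_{p,q}$, which, as you correctly observe, leaves a spurious edge $a_1a_{p+1}$ inside one part, and you must then delete it via the exact sequence of Theorem \ref{thm14} together with a computation that $\Lk_{a_{p+1},a_1}$ vanishes on the generating $4$-cycles of $H_1(K_{p-1,q})$ (this is where $q\ge 3$ enters for you; in the paper's argument it enters only through connectivity of the residual graphs). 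Your connectivity bookkeeping and the disjointness argument via Lemma \ref{lmlink} both check out, so the extra machinery is deployed correctly; the cost is length, and the benefit is a worked instance of Theorem \ref{thm14} used in the forward direction --- deducing $Q_\Gamma=0$ from $Q_{\hat\Gamma}=0$ by showing the linking homomorphism vanishes --- which the paper's own examples only exhibit in reverse.
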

\begin{proof} We assume that $K_{p,q}$ is mature for $p\ge 3, q\ge 3$ and prove that $K_{p,q+1}$ is mature; the result follows by induction as we know from \cite{BF} that $K_{3,3}$ is mature.

Let $V=P\sqcup Q$ be the set of vertices of $K_{p,q}$ where $|P|=p$, $|Q|=q$ and every vertex of $P$ is connected by an edge to every vertex of $Q$.

Add a new edge $e$ connecting two vertices $v_1, v_2 \in P$. The condition of Theorem \ref{thm0} is satisfied since the graph obtained by removing two vertices from $P$ deformation retracts onto
$K_{p-2, q}$ which is connected.

Next, subdivide $e$ by introducing a new vertex $q$ in the middle. The result is still mature.

Now we add an edge connecting $q$ to one of the remaining vertices $v_i\in P-\{v_1, v_2\}$. This edge addition satisfies Theorem \ref{thm0} and produces a mature graph.
This procedure may be repeated for every one of the vertices $v_i\in P-\{v_1, v_2\}$ and the final result is the graph $K_{p, q+1}$.
\end{proof}

\begin{corollary} 
\label{K_5 corollary}For the complete graph $\Gamma=K_n$ with $n\ge 5$ the configuration space $F(\Gamma, 2)$ has the following Betti numbers
$$b_1(F(\Gamma, 2))  = (n-1)(n-2),$$
$$b_2(F(\Gamma, 2)) = \frac{n(n-2)(n-3)(n-5)}{4} +1.$$
\end{corollary}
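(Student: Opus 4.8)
The plan is to combine the maturity of $K_n$ (the preceding proposition) with the Betti number formulae of Corollary \ref{cor2}; under maturity one has $Q_\Gamma=0$ by Corollary \ref{matureint}, so formula (\ref{btwo}) reduces to the expressions (\ref{dimone}) and (\ref{dimtwo}), and all that remains is to compute the relevant combinatorial invariants of $K_n$.

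First I would record that $K_n$ has $n$ vertices and $\binom{n}{2}$ edges, whence its first Betti number is $b_1(K_n)=\binom{n}{2}-n+1=\frac{(n-1)(n-2)}{2}$. Substituting this into (\ref{dimone}) gives $b_1(F(K_n,2))=2b_1(K_n)=(n-1)(n-2)$, which is the first assertion.

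For the second assertion I would use that every vertex of $K_n$ has valence $n-1$, so the correction term $\Sigma$ appearing in (\ref{btwo}) and (\ref{dimtwo}) equals $\Sigma=\sum_{v\in V(K_n)}(\mu(v)-1)(\mu(v)-2)=n(n-2)(n-3)$. Plugging $b_1(K_n)=\frac{(n-1)(n-2)}{2}$ and this value of $\Sigma$ into (\ref{dimtwo}) yields
$$b_2(F(K_n,2))=\frac{(n-1)^2(n-2)^2}{4}-\frac{(n-1)(n-2)}{2}+1-n(n-2)(n-3).$$
Writing $m=(n-1)(n-2)$, the first two terms combine to $\frac{m(m-2)}{4}$, and since $m-2=n^2-3n=n(n-3)$ this equals $\frac{n(n-1)(n-2)(n-3)}{4}$. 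Hence $b_2(F(K_n,2))=\frac{n(n-1)(n-2)(n-3)}{4}+1-n(n-2)(n-3)$, and factoring $n(n-2)(n-3)$ out of the two relevant terms gives $n(n-2)(n-3)\bigl(\frac{n-1}{4}-1\bigr)+1=\frac{n(n-2)(n-3)(n-5)}{4}+1$, as claimed.

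There is essentially no obstacle here: the statement follows at once from the maturity of $K_n$ together with Corollary \ref{cor2}, and the only point requiring a little care is the elementary algebraic simplification of the $b_2$ formula sketched above (one should also note that for $n\ge 5$ the quantity $\frac{n(n-2)(n-3)(n-5)}{4}$ is automatically a nonnegative integer, consistent with its being a Betti number).
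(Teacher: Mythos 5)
Your proposal is correct and follows exactly the route the paper takes: invoke the maturity of $K_n$ so that formulae (\ref{dimone}) and (\ref{dimtwo}) apply, compute $b_1(K_n)=\binom{n-1}{2}$ and $\mu(v)=n-1$, and simplify. The algebraic reduction of the $b_2$ expression, which the paper leaves as ``elementary transformations,'' is carried out correctly in your write-up.
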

\begin{proof} Since we know that $K_n$ is mature we may apply formulae (\ref{dimone}) and (\ref{dimtwo}). The first Betti number of $\Gamma$ is the difference between the total number of edges and the number of edges in a spanning tree. Thus we have $b_1(\Gamma)= \binom{n}{2} - (n-1) = \binom {n-1} 2$ and $\mu(v)= n-1$ for every vertex $v$. Substituting into (\ref{dimtwo}) and making elementary transformations gives the indicated answer for the second Betti number.
\end{proof}

The expression for $b_2(F(\Gamma, 2))$ given by Corollary \ref{K_5 corollary} agrees with that found in \cite{CP}. 

Similarly we obtain:

\begin{corollary}
For the bipartite graph $\Gamma=K_{p,q}$ with $p\ge 3$ and $q\ge 3$ the configuration space $F(\Gamma, 2)$ has the following Betti numbers
$$b_1(F(\Gamma, 2)) = 2(p-1)(q-1),$$
$$b_2(F(\Gamma, 2)) = (p^2-3p+1)(q^2-3q+1).$$
\end{corollary}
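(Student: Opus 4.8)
The plan is to mimic the proof of Corollary \ref{K_5 corollary} verbatim, since $K_{p,q}$ is known to be mature by Proposition \ref{prop21}. Being mature, $\Gamma=K_{p,q}$ satisfies formulae (\ref{dimone}) and (\ref{dimtwo}), so everything reduces to (i) computing $b_1(\Gamma)$, (ii) tabulating the valences, and (iii) an elementary polynomial identity.

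First I would compute $b_1(\Gamma)$. The graph $K_{p,q}$ has $p+q$ vertices and $pq$ edges, and a spanning tree has $p+q-1$ edges, so
\[
b_1(\Gamma)=pq-(p+q-1)=(p-1)(q-1).
\]
Substituting into (\ref{dimone}) immediately gives $b_1(F(\Gamma,2))=2(p-1)(q-1)$, the first claimed formula.

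Next I would record the valences: each of the $p$ vertices in the first part has valence $q$, and each of the $q$ vertices in the second part has valence $p$. Hence the correction term in (\ref{dimtwo}) is
\[
\Sigma=\sum_{v\in V(\Gamma)}(\mu(v)-1)(\mu(v)-2)=p(q-1)(q-2)+q(p-1)(p-2).
\]
Plugging $b_1(\Gamma)=(p-1)(q-1)$ and this value of $\Sigma$ into (\ref{dimtwo}) yields
\[
b_2(F(\Gamma,2))=(p-1)^2(q-1)^2-(p-1)(q-1)+1-p(q-1)(q-2)-q(p-1)(p-2),
\]
and it remains only to verify that the right-hand side equals $(p^2-3p+1)(q^2-3q+1)$. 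Expanding both sides (each is a symmetric-in-$(p,q)$ polynomial of total degree $4$ in two variables) and comparing coefficients is a routine check: one finds both equal $p^2q^2-3p^2q-3pq^2+p^2+q^2+9pq-3p-3q+1$.

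I do not anticipate any genuine obstacle here; the only non-mechanical ingredient is the maturity of $K_{p,q}$, which is already established, and the rest is bookkeeping. If one wants a slicker presentation of the final identity, I would note $p^2-3p+1=(p-1)(q-1)\big|$-type substitutions are unenlightening, so I would simply carry out the expansion, exactly as in the proof of Corollary \ref{K_5 corollary}.
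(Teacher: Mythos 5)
Your proposal is correct and follows exactly the paper's own argument: invoke the maturity of $K_{p,q}$, substitute $b_1(\Gamma)=(p-1)(q-1)$ and the valences ($q$ for the $p$ vertices of one part, $p$ for the $q$ vertices of the other) into the Betti-number formulae for mature graphs, and verify the resulting polynomial identity. The computations, including the value of $\Sigma=p(q-1)(q-2)+q(p-1)(p-2)$ and the final expansion, all check out.
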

\begin{proof} In this case we have $b_1(\Gamma)= (p-1)(q-1)$ and $\mu(v)=p$ (for vertices in $Q$) and $\mu(v)=q$ (for vertices in $P$).
Now one uses Proposition \ref{prop21} and formulae (\ref{dimone}) and (\ref{dimtwo}).
\end{proof}

\section{Further questions}

In this section we mention several open questions and conjectures.

1. We conjecture that a connected non-planar graph is mature if and only if it admits no decomposition as a wedge or double wedge.
This conjecture is inspired by Propositions \ref{maturity v connectedness} and \ref{double edge}
and the fact that we do not know of any non-mature graphs other than those covered in these propositions.

2. Consider a random graph $\Gamma\in G(n, p)$. Here $n$ is an integer, $0<p<1$, and $G(n,p)$ denotes the probability space of all subgraphs of the complete graph $K_n$
with each edge of $K_n$ included in $\Gamma$ with probability $p$, independently of all other edges.
This is the well-known Erd{\"{o}}s
- R\'{e}nyi model of random graphs. Note that the cardinality of $G(n,p)$ is $2^{\binom n 2}$
 and the probability that a specific graph $\Gamma$ appears as a result of a random process equals
\begin{eqnarray*}
{\mathbf P}(\Gamma)\,  = \, p^{E_\Gamma}(1-p)^{{\binom n 2}-E_\Gamma},
\end{eqnarray*}
where $E_\Gamma$ denotes the number of edges of $\Gamma$, see \cite{JLR}.

We believe that a random graph $\Gamma\in G(n, p)$
with parameter $p$ large enough is mature with high probability. One may want to find a threshold $0<p_c<1$ for maturity such that a random graph $\Gamma\in G(n, p)$ with $p>p_c$ is mature with probability tending to one as $n \to \infty$ and a random graph $\Gamma\in G(n, p)$
with $p<p_c$ is immature with probability tending to one as $n\to \infty$.

3. The concept of maturity may have interesting higher analogues relevant to the study of configuration spaces $F(\Gamma, k)$ with $k>2$, i.e. when one considers more than two points on a graph $\Gamma$.
The inclusion $$\alpha_k: F(\Gamma, k) \to \Gamma^k$$ (where $\Gamma^k$ denotes the Cartesian product of $k$ copies of $\Gamma$) induces an epimorphism
$$(\alpha_k)_\ast: H_1(F(\Gamma, k)) \to H_1(\Gamma^k)$$
(under very general assumptions on $\Gamma$, compare \cite{BF}, Proposition 1.3) and one says that a graph $\Gamma$ is $k$-mature if $(\alpha_k)_\ast$ is an isomorphism.
It would be useful to find examples and investigate properties of $k$-mature graphs for $k>2$.

4.  We do not know examples of graphs $\Gamma$ such that the homology group $H_1(F(\Gamma, 2))$ has nontrivial torsion and we conjecture that this group is always torsion free. This property is equivalent to the absence of torsion in the
 cokernel of the intersection form $I_\Gamma$ (see (\ref{intersection})), as follows from (\ref{six}).

%\vspace{\headheight}
%To be added:

%1. Explicit Betti numbers of $F(\Gamma, 2)$ for $\Gamma=K_n$ and $\Gamma=K_{p,q}$.

%2. Comparison with results of Copeland and Patty - their mistakes.

%3. Discussion of Theorems 11 and 13 (linking) and comparison with Copeland - Patty.

\bibliographystyle{amsalpha}

\end{document}